\newtheorem{theorem}{Theorem}[section]
\newtheorem{corollary}[theorem]{Corollary}
\newtheorem{lemma}[theorem]{Lemma}
\newtheorem{remark}[theorem]{Remark}
\newtheorem{proposition}[theorem]{Proposition}
\newtheorem{definition}[theorem]{Definition}
\numberwithin{equation}{section}
\newcommand{\francois}[1]{{\color{blue} \sf $\spadesuit\spadesuit\spadesuit$ Fran\c{c}ois: [#1]}}
\def\Z{\mathbb{Z}}
\def\Q{\mathbb{Q}}
\def\C{\mathbb{C}}
\newcommand{\Spec}{{\rm Spec\, }}
\newcommand{\Pic}{{\rm Pic}}
\newcommand{\disc}{{\rm disc}}
\newcommand{\ra}{\rightarrow}
\date{\today}
\title[Zarhin's trick for $K3$ surfaces]{Birational boundedness for holomorphic symplectic varieties, Zarhin's trick for $K3$ surfaces, and the Tate conjecture}
\author{Fran\c cois Charles}
\address{Department of Mathematics, Massachusetts Institute of Technology, Cambridge, MA 02139-4307, USA; and CNRS, Laboratoire de Math\'ematiques d'Orsay, Universit\'e Paris-Sud, 91405 Orsay CEDEX, France}
\email{francois.charles@math.u-psud.fr}
\date{}
\begin{document}

\begin{abstract}
We investigate boundedness results for families of holomorphic symplectic varieties up to birational equivalence. We prove the analogue of Zarhin's trick for $K3$ surfaces by constructing big line bundles of low degree on certain moduli spaces of stable sheaves, and proving birational versions of Matsusaka's big theorem for holomorphic symplectic varieties. 

As a consequence of these results, we give a new geometric proof of the Tate conjecture for $K3$ surfaces over finite fields of characteristic at least $5$, and a simple proof of the Tate conjecture for $K3$ surfaces with Picard number at least $2$ over arbitrary finite fields -- including fields of characteristic $2$.
\end{abstract}

%
% Use the package "url.sty" to avoid
% problems with special characters
% used in your e-mail or web address
%
\maketitle

\section{Introduction}

\subsection{Main results}

The initial inspiration for the results of this text is the paper \cite{Zarhin74}. The main insight of Zarhin -- "Zarhin's trick" -- is the fact that if $A$ is an abelian variety over an arbitrary field $k$, then $(A\times \widehat A)^4$ admits a principal polarization. In particular, while the set of isomorphism classes of polarized abelian varieties of fixed dimension $g$ does not form a limited family if $g>1$, it does map naturally to the moduli space of principally polarized abelian varieties of dimension $8g$. As proved by Tate in \cite{Tate66}, this finiteness result directly implies the Tate conjecture for abelian varieties over finite fields.

The goal of this paper is to discuss an analogue of this circle of ideas for $K3$ surfaces, and explain applications to new proofs of the Tate conjecture for divisors on these surfaces. While some of our results are not new, one of the goal of this paper is to emphasize the role of certain geometric objects -- moduli spaces of twisted and untwisted sheaves -- regarding the existence of divisors on surfaces.

\bigskip

We first investigate Zarhin's trick for $K3$ surfaces and proceed in two steps. The first step is to construct big line bundles on moduli spaces of sheaves on $K3$ surfaces. A simplified version of our result, which is stated in detail in Theorem \ref{thm:Zarhins-trick}, is the following. 

\begin{theorem}
Let $k$ be a field, and let $d$ be a positive integer. Then there exists a positive integer $r$ such that for infinitely many positive integers $m$, if $(X, H)$ is a polarized $K3$ surface of degree $2md$ over $k$, then there exists a smooth, $4$-dimensional, projective moduli space $\mathcal M$ of stable sheaves on $X$ and a line bundle $L$ on $\mathcal M$ satisfying $c_1(L)^{4}=r$ and $q(L)>0$, where $q$ is the Beauville-Bogomolov form on $\mathcal M$.
\end{theorem}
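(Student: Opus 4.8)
The plan is to realize $\mathcal M$ as a moduli space $M_H(v)$ of $H$-stable sheaves whose Mukai vector $v$ satisfies $\langle v,v\rangle=2$, so that $\mathcal M$ is automatically smooth, projective and $4$-dimensional, and to obtain $L$ from a class in the orthogonal complement $v^{\perp}$ via the Mukai isometry. The cleanest choice is $v=(1,0,-1)$, for which $M_H(v)$ is the Hilbert scheme $X^{[2]}$ of length-two subschemes; this needs no genericity hypothesis on $H$, no stability analysis, and no Brauer obstruction, since the universal ideal sheaf exists over $k$. I would work inside the rank-three sublattice $\Lambda=\Z(1,0,0)\oplus\Z(0,H,0)\oplus\Z(0,0,1)$ of the algebraic Mukai lattice of $X$, which is isometric to $U\oplus\langle 2md\rangle$ and is present for every polarized surface of degree $2md$. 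Here $\langle v,v\rangle=2$ as required.

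The two structural inputs I would invoke, both classical and underlying Theorem \ref{thm:Zarhins-trick}, are the Mukai isometry $\theta_v\colon v^{\perp}\lrasim (H^2(\mathcal M),q)$ onto the Beauville--Bogomolov lattice and the Fujiki relation $\alpha^{4}=3\,q(\alpha)^{2}$ valid for all $\alpha\in H^2(\mathcal M)$, the Fujiki constant of $X^{[2]}$ being $3$. Inside $\Lambda$ one has $v^{\perp}\cap\Lambda=\Z(0,H,0)\oplus\Z(1,0,1)$, and $\theta_v$ carries this to $\Z H_{[2]}\oplus\Z\delta$, where $H_{[2]}$ is the class induced by $H$ and $2\delta$ is the exceptional divisor of the Hilbert--Chow morphism, with $q(H_{[2]})=2md$, $q(\delta)=-2$ and $H_{[2]}\perp\delta$. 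Taking $w=(0,H,0)+n(1,0,1)$ I set $L=\theta_v(w)=H_{[2]}-n\delta$, a line bundle defined over $k$, for which $q(L)=2md-2n^{2}$ and hence, by Fujiki, $c_1(L)^{4}=3\,(2md-2n^{2})^{2}$.

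The core of the proof is then the elementary Diophantine choice that renders the right-hand side independent of $m$. I would impose $md-n^{2}=d$, i.e. take $n=dt$ and $m=dt^{2}+1$ for $t=1,2,\dots$; this produces infinitely many admissible integers $m$, and for each of them $q(L)=2d>0$ together with $c_1(L)^{4}=3\,(2d)^{2}=12d^{2}$. Thus $r=12d^{2}$ answers the theorem. More generally any constraint $md-n^{2}=e$ with $e$ a fixed multiple of $d$ works, and this is precisely where the freedom to vary $m$ is used: it is what lets $v^{\perp}$ represent a bounded positive norm, and so keeps the top self-intersection bounded.

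The part I expect to be genuinely delicate is not the lattice bookkeeping but the geometry over an arbitrary field $k$, especially in positive characteristic: the smoothness and projectivity of $\mathcal M$, the fact that $\theta_v$ is an isometry of integral lattices compatible with the field of definition, and, crucially, that the half-class $\delta$ itself -- not merely $2\delta$ -- is represented by a line bundle defined over $k$, so that $L$ descends to $k$. Choosing $v=(1,0,-1)$ neutralizes most of this, the smoothness of $X^{[2]}$ and the algebraicity of $H_{[2]}$ and $\delta$ being classical; but for the higher-rank moduli spaces that Theorem \ref{thm:Zarhins-trick} and the Tate conjecture applications require, one must additionally control stability with respect to a $v$-generic polarization and, where the universal sheaf is obstructed, replace it by a twisted universal sheaf.
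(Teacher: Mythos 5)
Your argument is correct, but it takes a genuinely different and more elementary route than the paper's. The paper fixes an abstract rank-two positive-definite lattice $\Lambda\subset\Lambda_{2d}$ containing a Mukai vector $v$ with $v^2=2$, a vector $w$ with $v\cdot w=1$, and a class $l\perp v$, and then invokes Nikulin's theory of discriminant forms (Proposition \ref{proposition:low-degree-elements-in-lattices}) to embed $\Lambda$ primitively into $\Lambda_{2md}$ for every $m$ in an explicit congruence-plus-quadratic-residue class; since the resulting $v$ may have rank greater than one, descending $L$ from $\overline k$ to $k$ then requires the Brauer-obstruction argument via the degree-$324$ zero-cycle, at the cost of replacing $L$ by $L^{\otimes 324}$. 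You instead take $v=(1,0,-1)$, so that $\mathcal M=X^{[2]}$, and set $L=H_{[2]}-n\delta$ with $md-n^2=d$; this dissolves all of those difficulties at once (condition $(C)$ is automatic, the classes $H_{[2]}$ and $\delta$ are realized by explicit line bundles over $k$ -- and with $n=dt$ even you could even avoid the half-class $\delta$ altogether and use the exceptional divisor $E=2\delta$), and it yields the clean explicit constants $q(L)=2d$ and $r=12d^2$ via the Fujiki relation of Theorem \ref{thm:general-properties-of-moduli}(iii). What you lose -- and this is why the paper takes the harder road -- is the arithmetic structure of the admissible set of $m$: your $m$ are of the form $(n^2+e)/d$, a thin set, whereas the application to the Tate conjecture (proof of Theorem \ref{thm:main}) needs $m=\ell^{2n}$ to be admissible for infinitely many $n$, which the congruence and quadratic-residue conditions deliver via quadratic reciprocity but a perfect-square constraint does not; likewise the refinements in the full Theorem \ref{thm:Zarhins-trick} (control of $q(L)$ and $q(A)$ modulo a prescribed integer, liftability of the triple to $W$) are outside the reach of your specific choice. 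For the simplified statement as isolated here, however, your proof is complete and strictly simpler than the paper's.
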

We actually give a congruence condition on $m$ that ensures it satisfies the property above. 

The space $\mathcal M$ is a natural analogue of the product $(A\times \widehat A)^4$ appearing in Zarhin's trick. Indeed, at least over $\C$, it is an irreducible holomorphic symplectic variety -- that is, it is simply connected and its space of holomorphic $2$-forms is spanned by a single symplectic form. Furthermore, it is deformation-equivalent to the Hilbert scheme $X^{[2]}$ of $2$ points on $X$.

By an important theorem of Huybrechts \cite[3.10]{Huybrechts99}, either $L$ or its dual is big. This means that there exists a power of $L$ that induces a birational map from $\mathcal M$ onto a subvariety of projective space. Another major theme -- and the second step -- of this paper is investigating the extent to which a birational version of Matsusaka's big theorem holds in this setting. We formulate an optimistic possible result as a question.

\noindent\textbf{Question.} Let $\mathcal M$ be either a complex projective holomorphic symplectic variety or -- in positive characteristic -- a smooth projective moduli space of stable sheaves of dimension $2n$ on a $K3$ surface, and let $L$ be a big line bundle on $\mathcal M$ with $c_1(L)^{2n}=r$ and $q(L)>0$, where $q$ is the Beauville-Bogomolov form. Do there exist integers $N, d$ and $k$ depending only on $r$ and $n$ such that the complete linear system $|kL|$ induces a birational map from $\mathcal M$ onto a subvariety of degree at most $d$ of $\mathbb P^{m}$ with $m\leq N$ ? 

One could even ask whether the integer $k$ can be chosen independently of $r$. If one could control the singularities of a general member of $|L|$, this would follow in characteristic zero from Theorem 1.3 in \cite{HaconMcKernanXu14}.

We are not able to answer the question -- in positive characteristic, even the case where $L$ is assumed to be ample is not clear -- but give partial results in that direction. 
For $K3$ surfaces, geometric considerations following \cite{SaintDonat74} allow us to answer the question in any characteristic, up to replacing $L$ by a different line bundle $L'$ with self-intersection bounded in terms of $r$. This is Proposition \ref{proposition:birational-boundedness-for-K3}.

In higher dimension, we do not give a completely geometric proof, as we do not understand the geometry of linear systems well enough in that case -- see however related considerations in \cite{OGrady08}. Over the field of complex numbers, we can use the period map and the global Torelli theorem of \cite{Verbitsky13} to answer the question in Theorem \ref{thm:Matsusaka-birational}, again possibly changing the bundle $L$. This has the following consequence.

\begin{theorem}
Let $n$ and $r$ be two positive integers. Then there exists a scheme $S$ of finite type over $\C$, and a projective morphism $\mathcal X\ra S$ such that if $X$ is a complex irreducible holomorphic symplectic variety of dimension $2n$ and $L$ is a line bundle on $X$ with $c_1(L)^{2n}=r$ and $q(L)>0$, where $q$ is the Beauville-Bogomolov form, then there exists a complex point $s$ of $S$ such that $\mathcal X_s$ is birational to $X$.
\end{theorem}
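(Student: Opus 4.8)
The plan is to deduce this statement formally from the birational version of Matsusaka's theorem, Theorem \ref{thm:Matsusaka-birational}, combined with the classical boundedness of Hilbert schemes. By Theorem \ref{thm:Matsusaka-birational} there exist positive integers $N$, $d$ and $k$, depending only on $n$ and $r$, with the following property: for every complex irreducible holomorphic symplectic variety $X$ of dimension $2n$ and every line bundle $L$ on $X$ with $c_1(L)^{2n}=r$ and $q(L)>0$, there is a line bundle $L'$ on $X$ such that the complete linear system $|kL'|$ induces a birational map from $X$ onto a subvariety $Y\subset \PP^m$ with $m\leq N$ and $\deg Y\leq d$. In particular $Y$ is an integral projective subvariety of $\PP^m$ of dimension $2n$ and degree at most $d$, and it is birational to $X$.

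The second ingredient is that such subvarieties $Y$ vary in a family of finite type. The subvarieties of $\PP^m$ of dimension $2n$ and degree at most $d$, with $m$ ranging over $\{0,1,\ldots,N\}$, form a bounded family, so only finitely many Hilbert polynomials $P$ occur among them: the degree of $P$ equals $2n$ and its leading coefficient is $\deg(Y)/(2n)!\leq d/(2n)!$, and the finiteness of the remaining coefficients follows from a uniform Castelnuovo--Mumford regularity bound for subschemes of bounded degree in a fixed projective space. For each such $P$, the Hilbert scheme $\mathrm{Hilb}^{P}(\PP^m)$ is projective over $\C$ by Grothendieck's theorem.

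I would then take $S$ to be the disjoint union of this finite collection of Hilbert schemes, and $\mathcal X\ra S$ to be the restriction of the corresponding universal families. This $S$ is of finite type (in fact projective) over $\C$, and $\mathcal X\ra S$ is projective. Given $(X,L)$ as in the statement, the subvariety $Y$ produced above defines a point $s$ of $S$ for which $\mathcal X_s=Y$, and $\mathcal X_s$ is birational to $X$ by construction. This proves the theorem.

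The entire difficulty is concentrated in Theorem \ref{thm:Matsusaka-birational}, which supplies the uniform constants $N$, $d$, $k$; the deduction above is purely formal once those bounds are in hand. The delicate point internal to that theorem -- which this corollary takes as a black box -- is that the bounds must be uniform over all deformation types of irreducible holomorphic symplectic varieties of dimension $2n$ at once. This is precisely where the period map and the global Torelli theorem of \cite{Verbitsky13} are used, together with the Fujiki relation $c_1(L')^{2n}=c\cdot q(L')^n$ (with $c$ the Fujiki constant), which bounds $q(L')$ and hence, through the Riemann--Roch polynomial for irreducible holomorphic symplectic varieties, controls the dimension $m$ of the ambient projective space in terms of $r$ and $n$ alone.
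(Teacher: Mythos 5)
Your deduction is correct, but it takes a different route from the paper's. In the paper, the boundedness statement is the ``In particular'' clause of Theorem \ref{thm:Matsusaka-birational}, and the scheme $S$ is not reconstructed after the fact: it is the family built inside the proof of that theorem, namely the disjoint union, over the finitely many deformation types of Lemma \ref{lemma:finiteness-deformations}, of the families $\mathcal X\ra S$ whose period map surjects onto $\Gamma\backslash D$; the global Torelli theorem then directly exhibits, for each $(X,L)$, a fiber birational to $X$, and the constants $k, N, d$ are an output of that family (via Lemma \ref{lemma:boundedness-big}) rather than an input. You instead treat the numerical conclusion of Theorem \ref{thm:Matsusaka-birational} as a black box and recover $S$ from the boundedness of integral subvarieties of $\mathbb P^{m}$, $m\leq N$, of dimension $2n$ and degree at most $d$, realized inside finitely many Hilbert schemes. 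This is a valid formal reduction, and it has the virtue of applying verbatim in any context where the uniform bounds $k, N, d$ are available (for instance, it is essentially the argument the paper uses for $K3$ surfaces over finite fields in Corollary \ref{corollary:basic-finiteness}, via Chow forms); the paper's route avoids invoking boundedness of subvarieties of bounded degree but is tied to the period-map construction. One small caution: a uniform Castelnuovo--Mumford regularity bound for arbitrary \emph{subschemes} of bounded degree in fixed $\mathbb P^m$ is false (non-reduced components ruin it); what you need, and what is true classically, is boundedness of the family of \emph{integral} closed subvarieties of bounded dimension and degree, which applies here since your $Y$ is the (reduced, irreducible) image of $X$ under $|kL'|$.
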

In other words, the holomorphic symplectic varieties as above form a bounded family.

In positive characteristic, we deal with finite fields using a similar strategy -- in that case, the period map is replaced by the Kuga-Satake construction. This prevents us from dealing with characteristic $2$, as is customary. We also assume that the characteristic is different from $3$, but this might be an unimportant restriction. The finiteness result we obtain is Proposition \ref{proposition:weak-bb}.

\bigskip

The main application of our results is to the Tate conjecture for $K3$ surfaces over finite fields. In odd characteristic, it has been proved in \cite{Maulik12, tateinv}, and independently in \cite{MadapusiPera13}. The first of this proofs relies on results of Borcherds on the Picard group of Shimura varieties, while the second one uses construction of canonical models of certain Shimura varieties. We follow a different approach that first appeared in spirit in \cite{ArtinSwinnertonDyer73} and was discussed in \cite{LieblichMaulikSnowden14}. In the latter paper, it is proved that the Tate conjecture for $K3$ surfaces over a finite field $k$ is equivalent to the finiteness of the set of isomorphism classes of $K3$ surfaces over $k$. By refining the arguments of \cite{LieblichMaulikSnowden14}, we are able to use a version of this criterion, together with both our version of Zarhin's trick above and our birational boundedness results, and give a new proof of the following theorem.

\begin{theorem}\label{thm:main}
Let $X$ be a $K3$ surface over a finite field of characteristic at least $5$. Then $X$ satisfies the Tate conjecture.
\end{theorem}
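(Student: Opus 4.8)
The plan is to derive the Tate conjecture from a finiteness statement, following and refining the strategy of Lieblich--Maulik--Snowden. By their criterion (as recalled in the introduction), the Tate conjecture for $K3$ surfaces over finite fields of characteristic $p$ is equivalent to the statement that, for each finite field $k$ of characteristic $p$, there are only finitely many isomorphism classes of $K3$ surfaces over $k$. So I would reduce Theorem~\ref{thm:main} to proving this finiteness for every finite field of characteristic at least $5$. The real content of finiteness is the control of the polarization degree: $K3$ surfaces over $\mathbb{F}_q$ carrying a polarization of bounded degree form a bounded family and hence finitely many isomorphism classes (finitely many $\mathbb{F}_q$-points of a Hilbert scheme), so the whole difficulty is that a single $K3$ surface may admit only polarizations of very large degree---exactly the phenomenon that, for abelian varieties, Zarhin's trick removes by passing to $(A\times\widehat A)^4$.

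Accordingly, the second step is to apply the geometric Zarhin's trick of Theorem~\ref{thm:Zarhins-trick}. Given a $K3$ surface $X$ over $k$, I would choose a polarization $H$ and, after replacing $H$ by a suitable power to place its degree in the form $2md$ with $m$ in the prescribed congruence class, attach to $(X,H)$ the smooth projective $4$-dimensional moduli space $\mathcal{M}$ of stable sheaves, together with a line bundle $L$ satisfying $c_1(L)^4=r$ and $q(L)>0$. The crucial point is that $r$ depends only on $d$, and not on the (arbitrarily large) degree of $H$; thus the family of resulting pairs $(\mathcal{M},L)$, as $X$ ranges over all $K3$ surfaces over $k$, is organized by the single bounded numerical invariant $r$ rather than by the unbounded degrees of the $H$.

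The third step is to bound this family and then descend. Over $\mathbb{C}$ the relevant boundedness is Theorem~\ref{thm:Matsusaka-birational}, proved using the period map and Verbitsky's global Torelli theorem; in positive characteristic these are unavailable and are replaced, as in Proposition~\ref{proposition:weak-bb}, by the Kuga--Satake construction, which attaches to the cohomology of $\mathcal{M}$ an abelian variety whose boundedness---ultimately a consequence of Zarhin's original trick together with Tate's theorem for abelian varieties over finite fields---forces the $(\mathcal{M},L)$ with $c_1(L)^4=r$ and $q(L)>0$ to lie in a family of finite type. Over the finite field $k$ this yields finitely many isomorphism classes of such $\mathcal{M}$. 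Since each $\mathcal{M}$ is a moduli space of (possibly twisted) sheaves on the surface $X$ from which it was built, $X$ is recovered from $\mathcal{M}$ up to finite ambiguity---a $K3$ surface has only finitely many Fourier--Mukai partners---so only finitely many $X$ can occur, completing the reduction and hence the proof.

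The step I expect to be the main obstacle is the passage to positive characteristic in the boundedness result. Over $\mathbb{C}$ the period domain and the global Torelli theorem organize holomorphic symplectic fourfolds of fixed $c_1(L)^4$ and fixed sign of $q(L)$ into a bounded family almost by construction; in characteristic $p$ one must instead run the entire argument through Kuga--Satake abelian varieties, keeping track of the quadratic lattice, the level structure, and the delicate integral comparison isomorphisms, and it is here that the hypotheses $\mathrm{char}\neq 2$ and $\mathrm{char}\neq 3$ genuinely enter. Two further points require care: one must verify that Zarhin's trick applies uniformly enough that the bound $r$ is truly independent of $X$ (including the bookkeeping for the twisted sheaves underlying Theorem~\ref{thm:Zarhins-trick}), and one must perform the degree-adjustment placing $\deg H$ in the required congruence class without sacrificing uniformity of the final bounds.
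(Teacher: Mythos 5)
Your proposal assembles the right ingredients (the Lieblich--Maulik--Snowden philosophy, Zarhin's trick via Theorem \ref{thm:Zarhins-trick}, Kuga--Satake boundedness via Proposition \ref{proposition:weak-bb}), but its top-level architecture has a genuine gap: the reduction to ``there are only finitely many $K3$ surfaces over each finite extension of $k$'' cannot be carried out with the finiteness results this machinery actually yields. The paper states explicitly that it will \emph{not} use Theorem \ref{theorem:Maulik-Lieblich-Snowden} directly, and for good reason. The finiteness statement one can actually prove, Proposition \ref{prop:weak-finiteness}, applies only to non-superspecial $K3$ surfaces whose polarization degree $2md$ has $m$ in a prescribed congruence class with $a,b$ quadratic residues modulo $m$, \emph{and} whose N\'eron--Severi discriminant has bounded $p$-adic valuation. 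The second restriction is forced by the uncontrolled $p$-power factor $p^t$ in Corollary \ref{corollary:comparison-of-discriminants} (the cokernel of $\theta_v$ is only known to be $p$-primary torsion), and it cannot be verified for an arbitrary $K3$ surface over $k$; your step of ``replacing $H$ by a suitable power'' also does not obviously place the degree in the required congruence class, since powers only produce square multiples of the original degree. So the claim ``only finitely many isomorphism classes of $K3$ surfaces over $k$'' is simply not available, and the final step of your argument (recovering $X$ from $\mathcal M$ via finiteness of Fourier--Mukai partners) is likewise not how the paper closes the loop -- it goes through discriminant bounds on $NS(X)$ and Corollary \ref{corollary:basic-finiteness} instead.

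The missing idea is the contradiction argument through twisted sheaves. Assuming $X$ violates the Tate conjecture, $\mathrm{Br}(X)$ is infinite, and Lemmas \ref{lemma:new-lemma-3.5.1} and \ref{lemma:existence-of-quadratic-residues} produce, for a well-chosen prime $\ell$, a class $\gamma\in T(X,\Z_\ell)$ with $\gamma^2=-2d$. The twisted Mukai vectors $v_n=(\ell^n,\gamma+D,0)$ then give an infinite tower of twisted Fourier--Mukai partners $X_n=\mathcal M_{H_n}(v_n)$, each a $K3$ surface over $k$ carrying an ample class of degree $2d\ell^{2n}$; restricting to $n$ with $\ell^{2n}\equiv 1\ [N]$ puts these degrees in the admissible congruence class, and Proposition \ref{proposition:discriminant-of-twisted-partners} shows both that the $p$-adic valuation of $\disc(NS((X_n)_{\overline k}))$ stays bounded (since $\lambda_n^2\leq 4d^2$) and that its $\ell$-adic valuation tends to infinity. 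The first two facts place the $X_n$ within the scope of Proposition \ref{prop:weak-finiteness}; the third contradicts its conclusion. Your proposal contains no analogue of this construction, and without it the boundedness results for the moduli spaces $\mathcal M$ never get leveraged against a specific infinite family.
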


It was the hope of the author that the techniques of this paper would be able to give a proof of the Tate conjecture for $K3$ surfaces over finite fields that might not rely on Kuga-Satake varieties. However, our proof of birational boundedness results for higher-dimensional holomorphic symplectic varieties in positive characteristic turned out to require this construction. The reason why it appears is that since the birational geometry of holomorphic symplectic varieties might be hard to control, it is very helpful to translate the problem in terms of abelian varieties where the birational geometry is trivial and Matsusaka's big theorem is known even in positive characteristic. It seems possible that further understanding of the underlying geometry might answer the question above along the lines of Proposition \ref{proposition:birational-boundedness-for-K3}.

Our last result, which is new only in characteristic $2$ but whose proof is in any case significantly simpler that all the other proofs of the Tate conjecture, should be seen as a modern rephrasing of the main result of \cite{ArtinSwinnertonDyer73} which dealt with elliptic $K3$ surfaces. It does not use the Kuga-Satake construction nor $p$-adic methods. It relies on the ideas of \cite{LieblichMaulikSnowden14}, together with birational boundedness for $K3$ surfaces.

\begin{theorem}\label{thm:picard-number-2}
Let $X$ be a $K3$ surface over a finite field of arbitrary characteristic. If the Picard number of $X$ is at least $2$, then $X$ satisfies the Tate conjecture.
\end{theorem}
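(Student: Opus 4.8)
The plan is to deduce the Tate conjecture from a finiteness statement, following the strategy of \cite{LieblichMaulikSnowden14}, and to verify that finiteness using the birational boundedness of Proposition \ref{proposition:birational-boundedness-for-K3}. Write $k=\mathbb F_q$, let $\bar k$ be an algebraic closure, and recall that for a surface over a finite field the Tate conjecture for divisors is equivalent to the finiteness of the Brauer group $\mathrm{Br}(X_{\bar k})$ (equivalently, of each $\ell$-primary part), by the Artin--Tate formalism; the corank of $\mathrm{Br}(X_{\bar k})$ measures exactly the failure of Tate. So I would argue by contraposition: assuming $\mathrm{Br}(X_{\bar k})$ is infinite, I will produce infinitely many pairwise non-isomorphic $K3$ surfaces over a fixed finite extension of $k$, all carrying a big line bundle of uniformly bounded self-intersection, and then contradict boundedness.

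The production of these surfaces is where moduli of twisted sheaves enter, and is the step I expect to be the main obstacle. Given a Brauer class $\alpha\in\mathrm{Br}(X_{\bar k})$ and a primitive, isotropic, $\alpha$-twisted Mukai vector $v$ with $v^2=0$, the moduli space $M(X,\alpha,v)$ of $\alpha$-twisted $H$-stable sheaves is again a $K3$ surface $Y_\alpha$, a twisted Fourier--Mukai partner of $X$ whose transcendental lattice is isometric to that of $X$ up to a controlled index. As $\alpha$ ranges over an infinite subgroup of $\mathrm{Br}(X_{\bar k})$, the surfaces $Y_\alpha$ should fall into infinitely many isomorphism classes, since otherwise $X$ would be a twisted partner of a single $Y$ in incompatibly many ways, bounding the orders of the classes $\alpha$. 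Making the spaces $M(X,\alpha,v)$ exist as fine, smooth, projective $K3$ surfaces over a fixed finite extension of $k$ in arbitrary characteristic, controlling their fields of definition, and handling $p$-torsion Brauer classes when $p=2$, is precisely where the arguments of \cite{LieblichMaulikSnowden14} must be refined.

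The hypothesis $\rho(X)\geq 2$ is what allows me to stay with $K3$ surfaces rather than passing to the four-dimensional moduli spaces $\mathcal M$ of the earlier theorems. The algebraic Mukai lattice then has signature $(2,\rho)$ and rank at least four, so it contains the primitive isotropic vectors $v$ needed above. Moreover, the Picard lattices of the $Y_\alpha$ all have rank $\rho(X)\geq 2$ and signature $(1,\rho-1)$, and since the $Y_\alpha$ are isogenous to $X$ their discriminants are uniformly bounded. An indefinite lattice of signature $(1,\rho-1)$ with bounded discriminant represents a positive value bounded solely in terms of that discriminant and the rank, by the reduction theory of indefinite forms; by Riemann--Roch such a class, up to sign, is effective and hence big. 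Thus each $Y_\alpha$ carries a big line bundle of self-intersection bounded by a constant $r$ independent of $\alpha$. For $\rho=1$ the Picard lattice is $\langle 2d\rangle$ and represents no bounded positive value, which is exactly why the argument requires $\rho\geq 2$.

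Finally I would invoke Proposition \ref{proposition:birational-boundedness-for-K3}: the $K3$ surfaces carrying a big line bundle of self-intersection at most $r$ form a bounded family, and since birational smooth projective $K3$ surfaces are isomorphic, this is a bound up to isomorphism. Hence the $Y_\alpha$ are fibres of a single scheme of finite type over a fixed finite field, which has only finitely many points over that field. Only finitely many of the $Y_\alpha$ can therefore be non-isomorphic, contradicting the previous step. Consequently $\mathrm{Br}(X_{\bar k})$ is finite and $X$ satisfies the Tate conjecture. The argument is insensitive to the characteristic, which is why it continues to apply when $p=2$.
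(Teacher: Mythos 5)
Your overall architecture matches the paper's: assume Tate fails so that $\mathrm{Br}(X_{\bar k})$ is infinite, build twisted Fourier--Mukai partners that are again $K3$ surfaces, use $\rho\geq 2$ to equip each partner with a line bundle of uniformly bounded positive self-intersection, and conclude by the boundedness of Proposition \ref{proposition:birational-boundedness-for-K3} via Corollary \ref{corollary:basic-finiteness}. But the step where you produce that line bundle rests on a false premise. You assert that, the $Y_\alpha$ being isogenous to $X$, their Picard discriminants are uniformly bounded, and you then extract a bounded positive value by reduction theory of indefinite forms. In fact the discriminants are \emph{not} bounded: for the partner attached to a twisted Mukai vector of rank $r=\ell^n$ one has $(n_v)^2p^t\,\mathrm{disc}(NS(Y_{\bar k}))=r^2\,\mathrm{disc}(NS(X_{\bar k}))$ with $n_v$ bounded (Lemma \ref{lemma:twisted-discriminant} and Proposition \ref{proposition:discriminant-of-twisted-partners}), so the $\ell$-adic valuation of the discriminant tends to infinity with $n$. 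This unboundedness is not an obstacle to be argued away: it is exactly what the paper uses to certify that the surfaces $X_n$ fall into infinitely many $\bar k$-isomorphism classes, a point your proposal leaves essentially unproved (the remark that otherwise ``$X$ would be a twisted partner of a single $Y$ in incompatibly many ways'' is not an argument). As written, your bounded-discriminant claim and your infinitely-many-classes claim pull in opposite directions, and the first is simply wrong.

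The correct use of $\rho\geq 2$ is more concrete. Choose $D\in NS(X)$ with $D^2=2d<0$; since $NS(X)$ has signature $(1,\rho-1)$ with $\rho\geq 2$, the orthogonal complement of $D$ in $NS(X)$ contains a class $B$ with $B^2=2e>0$. With $v_n=(\ell^n,\gamma+D,0)$, where $\gamma\in T(X,\Z_\ell)$ satisfies $\gamma^2=-2d$, the class $b_n=(0,B,0)$ lies in $v_n^{\perp}$, and the isometry $v_n^{\perp}/\Z v_n\to NS(X_n)$ of (\ref{eq:twisted-NS}) transports it to a divisor class of the \emph{same} square $2e$ on every $X_n$, independently of $n$ and with no discriminant input; Corollary \ref{corollary:basic-finiteness} then bounds the number of $\bar k$-isomorphism classes of the $X_n$, contradicting the unbounded discriminants. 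Note also that producing the isotropic twisted Mukai vector in the first place requires finding $\gamma$ in the transcendental lattice with prescribed square, which is the content of Lemmas \ref{lemma:new-lemma-3.5.1} and \ref{lemma:existence-of-quadratic-residues} and involves a careful choice of $\ell$; your proposal flags this as a difficulty but does not resolve it.
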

It is perhaps interesting to notice that, after a finite extension of the base field, the hypothesis of the theorem above is satisfied as soon as $X$ satisfies the Tate conjecture -- if this holds, the Picard number of $X_{\overline k}$ should be even, see for instance \cite{deJongKatz00}. It would be very interesting to find a direct proof of this fact.

\bigskip

The paper is split in three parts, which are independent in some respect. In section 2, we prove a version of Zarhin's trick for $K3$ surfaces over arbitrary fields. This relies on the study of moduli spaces of stable sheaves on $K3$ surfaces and their cohomology, as initiated by Mukai. 

Section 3 is devoted to birational versions of Matsusaka's big theorem for holomorphic symplectic varieties, over $\C$ and finite fields. For $K3$ surfaces, we explain how to use results of Saint-Donat to prove the desired results, while in the other cases we need a finer analysis of some moduli spaces via period maps. This leads to technical complications in positive characteristic, which arise in particular due to the fact that there does not seem to be a satisfying definition of holomorphic symplectic varieties over arbitrary fields.

In section 4, we apply the aforementioned results to the Tate conjecture for $K3$ surfaces over finite fields. We follow the strategy of \cite{LieblichMaulikSnowden14} for the most part by using moduli spaces of twisted sheaves. We also discuss a simple proof of Theorem \ref{thm:picard-number-2}.

While the last section is arithmetic in nature, we hope that the first two might be of some interest even for complex geometers.

\subsection{A preliminary result}

We will need the following lifting result. It is certainly well-known to experts and follows easily from \cite{LieblichMaulik14}, see also \cite[Proposition A.1]{LieblichOlsson14}.

\begin{proposition}\label{proposition:lifting}
Let $X$ be a $K3$ surface over an algebraically closed field $k$ of positive characteristic, and let $L_1, \ldots, L_r$ be line bundles on $X$. Assume that $L_1$ is ample, and let $W$ be the ring of Witt vectors of $k$. If $r\leq 10$, there exists a finite flat morphism $S\ra \Spec W$, where $S$ is the spectrum of a discrete valuation ring, and a smooth projective relative $K3$ surface $\mathcal X\ra S$ such that 
\begin{enumerate}[(i)]
\item The special fiber of $\mathcal X\ra S$ is isomorphic to $X$;
\item The image of the specialization map 
$$\Pic(\mathcal X)\ra\Pic(X)$$
contains the classes of $L_1, \ldots, L_r$.
\end{enumerate}
\end{proposition}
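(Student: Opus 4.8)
The plan is to run the deformation theory of $X$ over the Witt vectors $W$ and to realize the simultaneous lifting of the $L_i$ as an intersection of divisors in the universal deformation space. First I would recall that, by Deligne's theorem on liftings of $K3$ surfaces, the deformation functor $\operatorname{Def}_X$ is pro-represented by a formally smooth $W$-algebra of relative dimension $20$; write $\operatorname{Def}(X)=\operatorname{Spf} W[[t_1,\dots,t_{20}]]$, with the closed point corresponding to $X$ itself and the special fiber being the equicharacteristic deformation space of $X$ over $k$. For each line bundle $L_i$, the obstruction to deforming $L_i$ along a first-order deformation with Kodaira--Spencer class $\xi$ is the contraction $c_1(L_i)\cup\xi\in H^2(X,\cO_X)$; since $\dim_k H^2(X,\cO_X)=1$, the locus $D_i\subset\operatorname{Def}(X)$ where $L_i$ lifts is cut out by a single equation. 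The key input, which is exactly what I would extract from \cite{LieblichMaulik14} (compare \cite[Proposition A.1]{LieblichOlsson14}), is that each $D_i$ is a relative effective Cartier divisor, flat over $W$ and formally smooth of relative dimension $19$.

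Next I would intersect these divisors. Set $D=\bigcap_{i=1}^r D_i$. Since each $D_i$ has codimension at most one, $D$ is flat over $W$ of relative dimension at least $20-r\ge 10$; the hypothesis $r\le 10$ guarantees in particular that this relative dimension is positive, so that the generic fiber $D_{\operatorname{Frac} W}$, which lives in characteristic zero, is nonempty. Flatness over $W$ is decisive here: it certifies that the origin of the special fiber, namely the class of $X$, lies in the closure of the characteristic-zero locus rather than in a component supported over the closed point of $\Spec W$. Consequently I may choose a one-dimensional formal closed subscheme of $D$ through the origin that dominates $\Spec W$, concretely by cutting $D$ with a generic system of $\dim D-1$ parameters. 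After normalization this produces a complete discrete valuation ring $R$, finite and flat over $W$, together with a compatible system of infinitesimal deformations of $X$ over the $R/\mathfrak{m}_R^n$ along which every $L_i$ deforms.

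Finally I would algebraize. The formal family over $\operatorname{Spf} R$ carries the formal lift of the ample bundle $L_1$, so by the Grothendieck existence theorem it is the formal completion of a projective relative $K3$ surface $\mathcal X\ra S=\Spec R$; smoothness of $\mathcal X\ra S$ and the identification of the special fiber with $X$ are built into the construction, which gives (i). For (ii), each $L_i$ deforms along the formal family by construction and, being compatible with the ample polarization, algebraizes to a line bundle on $\mathcal X$ whose restriction to the special fiber is $L_i$; hence the class of $L_i$ lies in the image of the specialization map $\Pic(\mathcal X)\ra\Pic(X)$, as required. I expect the main obstacle to be the second step: establishing that the $D_i$ are $W$-flat Cartier divisors and that their intersection remains $W$-flat with nonempty characteristic-zero fiber, i.e.\ that the simultaneous lifting problem is genuinely unobstructed in mixed characteristic. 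This is precisely the content borrowed from \cite{LieblichMaulik14}, and the numerical bound $r\le 10$ is what keeps the codimension count, and hence the positivity of the relative dimension of $D$, safely under control.
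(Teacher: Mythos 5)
There is a genuine gap, and it sits exactly where you locate the "main obstacle": the claim that $D=\bigcap_i D_i$ is flat over $W$ with nonempty characteristic-zero fiber. This is not something you can extract from \cite{LieblichMaulik14}, whose lifting result (Corollary 4.2) concerns $K3$ surfaces of \emph{finite height}; for such surfaces the whole N\'eron--Severi lattice lifts and no bound on $r$ is needed. The failure mode is the supersingular case: a supersingular $K3$ surface has Picard number $22$, and its full Picard lattice certainly does not lift to characteristic zero, so for $r=22$ the intersection $\bigcap_i D_i$ is entirely contained in the special fiber of the deformation space. This shows that flatness of the intersection over $W$ genuinely fails in general and cannot follow from a codimension count. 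It also shows that your explanation of the hypothesis $r\leq 10$ is off the mark: positivity of the "relative dimension" $20-r$ would persist up to $r=19$, yet the statement is false well before that without further input.

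The missing idea is the one the paper uses: reduce to the finite-height case by moving \emph{inside the equicharacteristic deformation space}. In the $19$-dimensional deformation space of $(X,L_1)$ over $k$, the locus of surfaces of non-finite height has dimension $9$ by \cite{Artin74}, while the locus where $L_2,\ldots,L_r$ all deform has codimension at most $r-1\leq 9$, hence dimension at least $10>9$. Therefore $(X,L_1,\ldots,L_r)$ is a specialization of a finite-height polarized $K3$ surface carrying (specializations of) all the $L_i$; one applies \cite[Corollary 4.2]{LieblichMaulik14} to that nearby finite-height surface and then specializes. This is where the bound $r\leq 10$ actually enters -- it guarantees escape from the supersingular locus -- and it is the step your argument does not supply. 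The rest of your outline (unobstructedness of $\operatorname{Def}_X$ over $W$, the divisors $D_i$ cut out by one equation via $c_1(L_i)\cup\xi\in H^2(X,\mathcal O_X)$, and algebraization via the formal lift of the ample $L_1$ and Grothendieck existence) is consistent with standard practice and with what the cited references do, but it cannot close the argument without the Artin dimension bound.
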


\begin{proof}
If $X$ has finite height, the result follows from \cite[Corollary 4.2]{LieblichMaulik14}. In general, in the deformation space of $(X, L_1)$ over $k$, which has dimension $19$, the complement of the locus of surfaces of finite height has dimension $9$ by \cite{Artin74}, and the deformation space of $(X, L_1, \ldots, L_r)$ has codimension $r-1$. As a consequence, $(X, L_1, \ldots, L_r)$ is a specialization of a $K3$ surface with finite height, which allows us to conclude.
\end{proof}

\subsection{Acknowledgements}This work was started during the junior trimester on algebraic geometry at the Hausdorff Institute in Bonn. I would like to thank the Hausdorff Institute for excellent working conditions. I am very happy to thank Olivier Benoist, Daniel Huybrechts and Eyal Markman for many useful conversations and pointing out some errors in a first version of this text. Presenting an early version of these results during a workshop on $K3$ surfaces in Fudan University, Shanghai was very helpful, and I am happy to thank the organizers.

\section{A variant of Zarhin's trick for $K3$ surfaces}

\subsection{Moduli spaces of stable sheaves on $K3$ surfaces}

The goal of this section is to describe the geometry of moduli spaces of stable sheaves on $K3$ surfaces. Over the field of complex numbers, these results are well-known due to the work of Mukai, O'Grady and Yoshioka. We explain below how to extend them to arbitrary fields.

\bigskip

Mukai lattices were first defined in \cite{Mukai87}. Recently, they have been defined and studied in great generality in the paper \cite{LieblichOlsson14}. For our purposes, we only need very basic definitions which recall now. 

\begin{definition}\label{definition:Mukai-vector}
Let $k$ be a field with algebraic closure $\overline k$, and let $X$ be a $K3$ surface over $k$. Let $\ell$ be a prime number which is invertible in $k$.
\begin{enumerate}[(i)]
\item The \emph{$\ell$-adic Mukai lattice of $X$} is the free $\Z_{\ell}$-module 
$$\widetilde H(X_{\overline k}, \Z_{\ell}) := H^0(X_{\overline k}, \Z_{\ell})\oplus H^2(X_{\overline k}, \Z_{\ell}(1))\oplus H^4(X_{\overline k}, \Z_{\ell}(2))$$
endowed with the \emph{Mukai pairing} 
$$\langle (a,b,c), (a', b', c')\rangle = bb'-ac'-a'c.$$
\item Let $\omega$ be the numerical equivalence class of a closed point in $X_{\overline k}$. A \emph{Mukai vector} on $X$ is an element $v$ of 
$$N(X) := \Z\oplus\mathrm{NS}(X)\oplus \Z\omega.$$
We denote by $\mathrm{rk}(v)$ the \emph{rank} of $v$, that is, its first component, and by $c_1(v)$ its component in $\mathrm{Pic}(X)$. We identify a Mukai vector and its image in the $\ell$-adic Mukai lattice under the natural injection
$$N(X) \ra \widetilde H(X_{\overline k}, \Z_{\ell}).$$
\item Let $\mathcal F$ be a coherent sheaf on $X$. The \emph{Mukai vector of $\mathcal F$} is
$$v(\mathcal F):=  ch(\mathcal F)\sqrt{td_X} = \mathrm{rk}(\mathcal F)+c_1(\mathcal F) + (\chi(\mathcal F)-\mathrm{rk}(\mathcal F))\omega.$$
\end{enumerate}
\end{definition}

If $\mathcal F$ and $\mathcal G$ are two coherent sheaves on $X$, let $\chi(\mathcal F, \mathcal G)=\sum_i (-1)^i\mathrm{Ext}^i(\mathcal F, \mathcal G)$. If $\mathcal F$ is locally free, then $\chi(\mathcal F, \mathcal G)=\chi(\mathcal F^\vee\otimes\mathcal G)$. By the Riemann-Roch theorem, see \cite[Proposition 2.2]{Mukai87} we have the following.

\begin{proposition}\label{proposition:euler-characteristic}
Let $\mathcal F$ and $\mathcal G$ be two coherent sheaves on $X$. Then
$$\chi(\mathcal F, \mathcal G)=-v(\mathcal F).v(\mathcal G).$$
\end{proposition}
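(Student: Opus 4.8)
The plan is to deduce the identity from the Hirzebruch--Riemann--Roch theorem together with the vanishing of the first Chern class of a $K3$ surface, reducing everything to a bookkeeping computation of components.

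First I would reduce to the case where $\mathcal F$ is locally free. Since every coherent sheaf on the smooth surface $X$ admits a finite locally free resolution, and since both the Euler pairing $\chi(\mathcal F,\mathcal G)$ and the Mukai vector $v(\cdot)$ factor through the Grothendieck group $K_0(X)$ and are additive on short exact sequences, it suffices to prove the formula for $\mathcal F$ locally free. In that case $\chi(\mathcal F,\mathcal G)=\chi(\mathcal F^\vee\otimes\mathcal G)$, and Riemann--Roch for the structure morphism $X\to\Spec k$ gives
\[
\chi(\mathcal F,\mathcal G)=\int_X ch(\mathcal F^\vee)\,ch(\mathcal G)\,td_X ,
\]
where the integral denotes the degree of the codimension-$2$ part, computed in $N(X)$ (equivalently, via the cycle class map, in $H^4(X_{\overline k},\Z_\ell(2))$). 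This form of Riemann--Roch is available over an arbitrary field.

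Next I would compute the Todd class. Because $X$ is a $K3$ surface we have $c_1(X)=0$ and $\chi(\mathcal O_X)=2$, so $td_X=1+2\omega$ and hence $\sqrt{td_X}=1+\omega$, using $\omega^2=0$. Let $\alpha\mapsto\alpha^\vee$ denote the involution acting by $(-1)^i$ on the degree-$2i$ component; then $ch(\mathcal F^\vee)=ch(\mathcal F)^\vee$, and $\sqrt{td_X}$, being of even degree, is fixed by this involution. Writing $ch(\mathcal F^\vee)\,td_X=\big(ch(\mathcal F^\vee)\sqrt{td_X}\big)\sqrt{td_X}$ therefore yields $ch(\mathcal F^\vee)\sqrt{td_X}=v(\mathcal F)^\vee$, so that
\[
\chi(\mathcal F,\mathcal G)=\int_X v(\mathcal F)^\vee\, v(\mathcal G).
\]

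Finally I would expand in components. Writing $v(\mathcal F)=(r,l,s)$ and $v(\mathcal G)=(r',l',s')$ in $N(X)=\Z\oplus \mathrm{NS}(X)\oplus\Z\omega$, one has $v(\mathcal F)^\vee=(r,-l,s)$, and the degree of the codimension-$2$ part of the product equals $rs'+r's-l.l'$. Comparing with the Mukai pairing $\langle(r,l,s),(r',l',s')\rangle=l.l'-rs'-r's$, this is exactly $-v(\mathcal F).v(\mathcal G)$, as desired. The only point requiring any care is the first paragraph: verifying that Riemann--Roch in this form holds over a possibly positive-characteristic field and that the intersection numbers and the duality involution are compatible with the $\ell$-adic realization used to define the Mukai pairing. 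This is standard, so the genuine content is purely the component computation carried out above.
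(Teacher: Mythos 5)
Your proof is correct and follows exactly the route the paper has in mind: the paper does not write out an argument but simply invokes the Riemann--Roch theorem and cites Mukai's Proposition 2.2, whose proof is precisely your computation (reduction to the locally free case, $\sqrt{td_X}=1+\omega$, the duality involution, and the component bookkeeping). Your component check matches the sign conventions of the Mukai pairing as defined in the paper, so nothing is missing.
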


Given a Mukai vector $v$ and a polarization $H$ on $X$ -- that is, $H$ is an isomorphism class of ample line bundles on $X$ -- we denote by $\mathcal M_H(X, v)$ the moduli space of Gieseker-Maruyama $H$-stable sheaves $\mathcal F$ on $X$ such that $v(\mathcal F)=v$. This moduli space is well-defined as a quasi-projective scheme over $k$ in arbitrary characteristic by work of Langer \cite[Theorem 0.2]{Langer04b}. When $X$ is fixed, we will denote this moduli space by $\mathcal M_H(v)$.

\bigskip

Over the field of complex numbers, it is customary to require the polarization $H$ to be generic with respect to $v$. Over an arbitrary field, we will state our results under a stronger assumption on $v$.

\begin{definition}\label{def:fine-moduli-space}
Let $X$ be a $K3$ surface over a field $k$, and let $H$ be a polarization on $X$. We say that a Mukai vector $v$ on $X$ satisfies condition $(C)$ if 
\begin{enumerate}[(i)]
\item The vector $v$ is primitive, $\mathrm{rk}(v)>0$ and $v^2>0$;
\item Writing $v=\mathrm{rk}(v)+c_1(v)+\lambda\omega$, then 
$$\gcd(\mathrm{rk}(v), H.c_1(v), \lambda)=1;$$
\end{enumerate}
\end{definition}

The following theorem describes the geometry of the moduli space in arbitrary characteristic. We will refine some of these results below under additional assumptions.

\begin{theorem}\label{thm:general-properties-of-moduli}
Let $k$ be a field with algebraic closure $\overline k$, and let $X$ be a $K3$ surface over $k$. Let $v$ be a Mukai vector on $X$ satisfying condition $(C)$.
\begin{enumerate}[(i)]
\item The space $\mathcal M_H(v)$ is a smooth, projective, geometrically irreducible variety of dimension $v^2+2$ over $k$. It is deformation-equivalent to the Hilbert scheme $X^{[n]}$ parametrizing subschemes of dimension $0$ and length $n=\frac{v^2+2}{2}$ in $X$. It is endowed with a natural symplectic structure, i.e., the sheaf $\Omega^2_{X/k}$ has a global section which is everywhere non-degenerate.
\item If $k$ is the field $\C$ of complex numbers, then $\mathcal M_H(v)$ is an irreducible holomorphic symplectic variety. 
\item If $\ell$ is a prime number which is invertible in $k$, then the $\ell$-adic cohomology group $H^2(\mathcal M_H(v)_{\overline k}, \Z_{\ell}(1))$ is endowed with a canonical quadratic form $q$ satisfying the formula 
\begin{equation}\label{eq:Beauville-Bogomolov}
\forall \alpha\in H^2(\mathcal M_H(v)_{\overline k}, \Z_{\ell}(1)), (2n)!q(\alpha)^{n}=(n!)2^n\alpha^{2n}
\end{equation}
where $2n$ is the dimension of $\mathcal M_H(v)$.
\item There exists a canonical quadratic form on $NS(\mathcal M_H(v)_{\overline k})$. If $p>0$, this quadratic form has values in $\Z[1/p]$. If $p=0$, it has values in $\Z$. For any $\ell\neq p$, the first Chern class map 
$$c_1 : NS(\mathcal M_H(v)_{\overline k})\otimes\Z_{\ell}\ra H^2(\mathcal M_H(v)_{\overline k}, \Z_{\ell}(1))$$
is an isometry.

\item Let $\ell$ be a prime number which is invertible in $k$. Let $v^{\perp}$ be the orthogonal complement of $v$ in the $\ell$-adic Mukai lattice of $X$ -- by a slight abuse of notation, we do not make explicit the dependence in $\ell$. Then there exists a canonical, $\mathrm{Gal}(\overline k/k)$-equivariant, isomorphism
$$\theta_{v,\ell} : v^{\perp} \ra H^2(\mathcal M_H(v)_{\overline k}, \Z_{\ell}(1))$$
which is an isometry.
\item Assume that $k$ is either algebraically closed or finite. Let $v^{\perp}\cap N(X)$ be the orthogonal complement of $v$ in the lattice $N(X)$ of Mukai vectors on $X$. There exists an injective isometry 
$$\theta_v : v^{\perp}\cap N(X) \ra NS(\mathcal M_H(v))$$
such that the following diagram commutes 
\[
\xymatrix{v^{\perp}\cap N(X)\ar[rr]^{\theta_v}\ar[d]^{c_1} & &  NS(\mathcal M_H(v))\ar[d]^{c_1}\\
               v^{\perp}\ar[rr]^{\theta_{v,\ell}} & & H^2(\mathcal M_H(v)_{\overline k}, \Z_{\ell}(1))
}
\]
for any prime number $\ell$ as above. 
\item Assume that $k$ is either algebraically closed or finite. Then the cokernel of $\theta_v$ is a $p$-primary torsion group, where $p$ is the characteristic of $k$.
\end{enumerate}
\end{theorem}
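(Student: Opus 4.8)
The plan is to prove the equivalent statement that $\coker(\theta_v)\otimes\Z_{\ell}=0$ for every prime $\ell\neq p$. Indeed, $\coker(\theta_v)$ is a finitely generated abelian group, so it is $p$-primary torsion if and only if its base change to $\Z_{\ell}$ vanishes for all $\ell\neq p$; and since $\Z_{\ell}$ is flat over $\Z$ we have $\coker(\theta_v)\otimes\Z_{\ell}=\coker(\theta_v\otimes\Z_{\ell})$, so what must be shown is that $\theta_v\otimes\Z_{\ell}$ is surjective for each $\ell\neq p$. I would first reduce to the case $k=\overline k$: when $k$ is finite, the exact sequence $0\to v^{\perp}\cap N(X)\to NS(\mathcal M_H(v))\to\coker(\theta_v)\to 0$ is the $\mathrm{Gal}(\overline k/k)$-invariants of the corresponding geometric sequence, because $\mathrm{Br}(k)=0$ forces $NS$ (and hence $N(X)$) to agree with its geometric Galois invariants on both sides. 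Taking invariants of the $\mathrm{Gal}$-equivariant geometric sequence then shows that $\coker(\theta_v)$ over $k$ injects into the Galois invariants of its geometric counterpart, so it suffices to treat the algebraically closed case.

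So assume $k=\overline k$ and fix $\ell\neq p$. Combining the commutative square of (vi) with the facts that $\theta_{v,\ell}$ is an isomorphism by (v) and that $c_1\colon NS(\mathcal M_H(v))\otimes\Z_{\ell}\to H^2(\mathcal M_H(v)_{\overline k},\Z_{\ell}(1))$ is injective by (iv), surjectivity of $\theta_v\otimes\Z_{\ell}$ is equivalent to the equality of subgroups of $H^2(\mathcal M_H(v)_{\overline k},\Z_{\ell}(1))$
\[
c_1\bigl(NS(\mathcal M_H(v))\otimes\Z_{\ell}\bigr)=\theta_{v,\ell}\bigl((v^{\perp}\cap N(X))\otimes\Z_{\ell}\bigr).
\]
The inclusion $\supseteq$ is immediate from commutativity of the square in (vi). The reverse inclusion is the heart of the matter: it says that, away from $p$, every class in the N\'eron--Severi group of $\mathcal M_H(v)$ arises through $\theta_{v,\ell}$ from the algebraic classes $N(X)$ of the $K3$ surface itself, i.e.\ that $\mathcal M_H(v)$ acquires no ``extra'' divisor classes beyond those forced by $X$.

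To establish this I would use the correspondence underlying $\theta_{v,\ell}$, namely a quasi-universal sheaf $\mathcal E$ on $X\times\mathcal M_H(v)$. Being an algebraic cycle class, $\mathcal E$ induces maps on $\ell$-adic cohomology that commute with the cycle class maps and carry algebraic classes to algebraic classes, which already yields the easy inclusion. For the hard inclusion I would introduce the transposed correspondence, which induces a map $H^2(\mathcal M_H(v)_{\overline k},\Z_{\ell}(1))\to\widetilde H(X_{\overline k},\Z_{\ell})$ sending algebraic classes into $N(X)\otimes\Z_{\ell}$ and, by compatibility with the Mukai pairing, landing in the orthogonal complement $v^{\perp}$. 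Computing the composite of $\theta_{v,\ell}$ with its transpose by Riemann--Roch (Proposition \ref{proposition:euler-characteristic}) shows it is multiplication by a factor that is a unit away from $p$; consequently $\theta_{v,\ell}^{-1}$ carries the algebraic classes of $\mathcal M_H(v)$, tensored with $\Z_{\ell}$, into $(v^{\perp}\cap N(X))\otimes\Z_{\ell}$, which is exactly the reverse inclusion.

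The main obstacle is precisely this last step: controlling the integral structure of the transposed Mukai correspondence and verifying that the normalization constants appearing when one inverts $\theta_{v,\ell}$ are powers of $p$ times units. This is what simultaneously bounds the Picard number of $\mathcal M_H(v)$ from above by that of $X$ plus one and explains why only $p$-torsion can survive in the cokernel; over $\C$ the same bound is the classical consequence of the Hodge-theoretic Torelli picture, but in positive characteristic the correspondence computation must replace Hodge theory, and keeping track of prime-to-$p$ integrality, rather than merely working rationally, is the delicate point.
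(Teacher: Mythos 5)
Your proposal only addresses part (vii) of the theorem, taking (i)--(vi) as known inputs. Those earlier parts are where most of the work in the paper's proof lies (stability under condition $(C)$, lifting to the Witt vectors to reduce to characteristic zero, O'Grady--Yoshioka for irreducibility and deformation equivalence, the construction and integrality of the Beauville--Bogomolov form and of $\theta_{v,\ell}$, and the Hochschild--Serre/Kummer argument over finite fields that makes the cokernels of the $c_1$ maps torsion-free). None of that is sketched, so even granting your argument for (vii) the proposal covers only a fraction of the statement.

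For (vii) itself, your reductions (pass to $\overline k$, reduce to surjectivity of $\theta_v\otimes\Z_\ell$, use the commutative square of (vi) together with the isomorphism in (v) and the injectivity in (iv)) match the paper. The genuine gap is the key step: the claim that the composite of $\theta_{v,\ell}$ with its transposed correspondence is multiplication by a factor that is a unit away from $p$, ``computed by Riemann--Roch.'' That is Mukai's argument when $\mathcal M_H(v)$ is a surface ($2n=2$), but here $\dim\mathcal M_H(v)=v^2+2$ is in general larger, and the transpose of the degree-$2$ K\"unneth component of $ch(\mathcal E)$ is a map out of $H^{4n-2}$, not out of $H^2$; there is no degree reason for the naive composite to be a scalar, and Proposition \ref{proposition:euler-characteristic}, which computes Euler characteristics on the surface $X$, does not compute it. The paper avoids this by inserting the hard Lefschetz operator $\cup H_{\mathcal M}^{2n-2}$ before applying the Poincar\'e dual $\theta_{v,\ell}^{\vee}$, and then does \emph{not} claim the composite $\phi_\ell$ is a scalar: it only shows $\phi_\ell$ is injective (hard Lefschetz plus (v), (vi)) and preserves the finite-dimensional subspace $(v^{\perp}\cap N(X))\otimes\Q_\ell$ (because it is an algebraic, Galois-equivariant correspondence), hence is an automorphism of it; a dimension count then forces $\theta_v\otimes\Q_\ell$ to be surjective. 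The integral statement is obtained separately and more cheaply, from the torsion-freeness of the cokernels of the two vertical $c_1$ maps in the square of (vi), rather than from any prime-to-$p$ unit computation. You correctly identify the integrality of the transposed correspondence as the delicate point, but the argument you propose to resolve it does not go through in dimension $2n>2$; you would need to replace it by something like the paper's hard Lefschetz plus finite-dimensionality argument at the level of $\Q_\ell$-vector spaces, combined with the torsion-freeness of the relevant cokernels for the $\Z_\ell$-statement.
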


\begin{proof}
Let $\mathcal F$ be a semistable torsion-free sheaf of Mukai vector $v$. Write $v=\mathrm{rk}(v)+c_1(v)+\lambda\omega$. By Proposition \ref{proposition:euler-characteristic} we have, for any integer $d$, 
$$\chi(\mathcal F(d))=\frac{\mathrm{rk}(v)}{2}d^2H^2+(H.c_1(v))d+\lambda+\mathrm{rk}(v).$$
Since $H.c_1(v), \mathrm{rk}(v)$ and $\lambda$ are relatively prime, this implies -- since $\mathcal F$ is semistable -- that if $\mathcal G$ is any coherent subsheaf of $\mathcal F$, then 
$$\frac{\chi(\mathcal F(d))}{\mathrm{rk}(\mathcal F)}>\frac{\chi(\mathcal G(d))}{\mathrm{rk}(\mathcal G)}$$
for any large enough integer $d$. Equivalently, $\mathcal F$ is stable. As a consequence, Theorem 0.2 of \cite{Langer04b} shows that $\mathcal M_H(v)$ is projective. By \cite[Corollary 0.2]{Mukai84}, it is smooth, endowed with a natural symplectic structure and, if non-empty, it is pure of dimension $v^2+2$. 

Let $n=\frac{v^2+2}{2}$. If $k=\C$, \cite[Main Theorem]{OGrady97} -- see also \cite[Theorem 8.1]{Yoshioka01} -- shows that the moduli space $\mathcal M_H(v)$ is an irreducible holomorphic symplectic variety birational to $X^{[n]}$. In particular, $\mathcal M_H(v)$ is not empty. By the main theorem of \cite{Huybrechts99}, $\mathcal M_H(v)$ is deformation-equivalent to $X^{[n]}$. The Lefschetz principle shows that these statements hold over any algebraically closed field of characteristic zero. 

Now assume $k$ is an arbitrary field. We want to show that $\mathcal M_H(v)$ is deformation-equivalent to $X^{[n]}$. For this, we can assume that $k$ is algebraically closed and, by the discussion above, that $k$ has positive characteristic. By Proposition \ref{proposition:lifting}, we can find a finite flat morphism $S\ra \Spec W$, where $W$ is the ring of Witt vectors of $k$, and a lifting $\mathcal X\ra S$ of $X$ over $S$ such that both $H$ and $c_1(v)$ lift to $\mathcal X$. As a consequence, the Mukai vector $v$ also lifts to $\mathcal X$.

Consider the relative moduli space $\mathcal M_H(\mathcal X, v)$, which exists by \cite[Theorem 0.2]{Langer04b}. By \cite[Theorem 1.17]{Mukai84}, it is smooth over $S$. Since its generic fiber is deformation-equivalent to $\mathcal X_{\eta}^{[n]}$, $\mathcal M_H(X,v)$ is deformation-equivalent to $X^{[n]}$. This shows $(i)$ and $(ii)$.

\bigskip

We now prove items $(iii)$ to $(v)$. For these, we can assume that $k$ is algebraically closed. First assume that $k=\C$. Then the Beauville-Bogomolov quadratic form on $H^2(\mathcal M_H(v), \Z_{\ell}(1))$ satisfies equation (\ref{eq:Beauville-Bogomolov}) by \cite[4.14]{OGrady08}. By the comparison theorem between singular and $\ell$-adic cohomology, this shows $(iii)$ for $k=\C$, hence for $k$ algebraically closed of characteristic zero. For general $k$, lifting as before by Proposition \ref{proposition:lifting}, the smooth base change theorem gives $(iii)$.

Let $\ell$ be a prime number invertible in $k$. The cycle class map gives an injection
$$c_1 : NS(\mathcal M_H(v))\ra H^2(\mathcal M_H(v), \Z_\ell(1)).$$
As a consequence, the Beauville-Bogomolov form on $H^2(\mathcal M_H(v), \Z_\ell(1))$ induces a quadratic form on $NS(\mathcal M_H(v)$ with values in $\mathcal \Z_\ell$. We denote it by $q$. We show that  $q$ actually takes values in $\Q$ and is independent of $\ell$. This will imply $(iv)$. If $k=\C$, this holds because the Beauville-Bogomolov form is actually defined on singular cohomology with integer coefficients. This shows that the result holds if $k$ has characteristic zero. Assume that $k$ has positive characteristic and choose a lifting $\mathcal X\ra S$ of $X, H$ and $v$ to characteristic zero as above.

Let $H_{\mathcal M}\in NS(\mathcal M_H(v))$ be an ample line bundle on $\mathcal M_H(X, v)$ that lifts to $\mathcal M_H(\mathcal X, v)$. Since $H_{\mathcal M}$ lifts to characteristic zero, the argument above shows that $q(H_{\mathcal M})$ is an integer independent of $\ell$.

Let $\overline\eta$ be a generic geometric point of $S$. By \cite[Th\'eor\`eme 5 and end of p.775]{Beauville83}, there exists a rational number $\lambda$, independent of $\ell$, such that for any $\alpha\in H^2(\mathcal M_H(\mathcal X_{\overline\eta}, v), \Z_\ell(1))$ such that $\alpha\cup H_{\mathcal M}^{2n-1}=0$, we have 
\begin{equation}\label{eq:alternate-BB}
q(\alpha)=\lambda\alpha^2 \cup H_{\mathcal M}^{2n-2}.
\end{equation}
Indeed, this is true over $\C$ by the result of Beauville quoted above, and thus holds over any algebraically closed field of characteristic zero. Furthermore, the same formula holds for $\mathcal M_H(v)$ by the smooth base change theorem. This readily implies that the quadratic form $q$ on $NS(\mathcal M_H(v))$ takes values in $\Q$ and is independent of $\ell$.  This proves $(iv)$.

Over the field of complex numbers, the map $\theta_{v, \ell}$ is defined on the level of singular cohomology with coefficients in $\Z$ in \cite[5.14]{Mukai87b} and $(v)$ holds by the main theorem of \cite{OGrady97}. By the same arguments as above, it holds over an arbitrary algebraically closed field. Since the morphism is canonical, it is Galois-equivariant.

\bigskip

We now prove $(vi)$. The map $\theta_{v,\ell}$ is induced by an algebraic correspondence with coefficients in $\Q$, see again \cite[5.14]{Mukai87b}. As a consequence, it induces a map 
$$\theta_{v} :  (v^{\perp}\cap N(X))\otimes\Q\ra NS(\mathcal M_H(v))\otimes\Q.$$
This map is clearly compatible with $\theta_{v, \ell}$ via the cycle class map. By the definition of the quadratic forms involved, this implies that $\theta_{v}$ is an injective isometry.

We claim that $\theta_{v, \overline k}$ is defined over $\Z$, that is, that it sends $v^{\perp}\cap N(X_{\overline k})$ to $NS(\mathcal M_H(v)_{\overline k})$. If $k$ has characteristic zero, this is due to the fact that $\theta_{v, \ell}$ is defined over $\Z_\ell$ for any prime number $\ell$ and that the cokernel of the cycle class map 
$$c_1 : NS(\mathcal M_H(v)_{\overline k})\ra H^2(\mathcal M_H(v)_{\overline k}, \Z_\ell(1))$$
has no torsion.

To show that $\theta_{v, \overline k}$ is defined over $\Z$ for arbitrary $k$, we lift once again to characteristic zero. Given $\alpha\in N(X_{\overline k})$, we can lift $X$, $H$, $v$ and $\alpha$ to characteristic zero by Theorem \ref{proposition:lifting} and apply the claim to the generic fiber. This shows $(vi)$ if $k$ is algebraically closed.

\bigskip

Assume now that $k$ is a finite field. Since $H^1(\mathcal M_H(v)_{\overline k}, \Z_\ell)=0$ -- as follows from $(ii)$ if $k$ is the field of complex numbers and from a lifting argument in general -- the Hochschild-Serre spectral sequence shows that the map 
$$H^2(\mathcal M_H(v), \Z_{\ell}(1))\ra H^2(\mathcal M_H(v)_{\overline k}, \Z_{\ell}(1))^{\mathrm{Gal}(\overline k/k)}$$
is an isomorphism, where the left-hand side denotes continuous étale cohomology. Furthermore, a classical argument involving the Kummer exact sequence shows that the cycle class map
$$\mathrm{Pic}(\mathcal M_H(v))\ra H^2(\mathcal M_H(v), \Z_{\ell}(1))$$
has a torsion-free cokernel. In other words, the cokernel of the cycle class map
$$\mathrm{Pic}(\mathcal M_H(v))\ra H^2(\mathcal M_H(v)_{\overline k}, \Z_{\ell}(1))^{\mathrm{Gal}(\overline k/k)}$$
is torsion-free.

Now let $\alpha\in N(X)\cap v^{\perp}$. Since $\theta_{v,\ell}$ is Galois-equivariant, the element $\theta_{v,\ell}(\alpha)\in H^2(\mathcal M_H(v)_{\overline k}, \Z_\ell(1))$ is Galois-invariant. Furthermore, by the argument above, some multiple of $\alpha$ is the Chern class of an element of $\mathrm{Pic}(\mathcal M_H(v))$. This shows that $\alpha$ belongs to $NS(\mathcal M_H(v))$.

\bigskip

Let $k$ be as in $(vii)$. To show the result, we need to show that if $\ell$ is any prime number invertible in $k$, then 
$$\theta_v\otimes\Z_\ell : (v^{\perp}\cap N(X))\otimes\Z_\ell\ra NS(\mathcal M_H(v))\otimes\Z_\ell$$
is an isomorphism. We already know that $\theta_v\otimes\Z_\ell$ is injective. Furthermore, in the commutative diagram 
\[
\xymatrix{(v^{\perp}\cap N(X))\otimes\Z_\ell\ar[rr]^{\theta_v\otimes\Z_\ell}\ar[d]^{c_1} & &  NS(\mathcal M_H(v))\otimes\Z_\ell\ar[d]^{c_1}\\
               v^{\perp}\ar[rr]^{\theta_{v,\ell}} & & H^2(\mathcal M_H(v)_{\overline k}, \Z_{\ell}(1))
}
\]
the lower horizontal map is an isomorphism and the cokernel of the two vertical maps are torsion-free as shown above. This implies that the cokernel of $\theta_v\otimes\Z_\ell$ is torsion-free. To show that $\theta_v\otimes\Z_\ell$ is an isomorphism, we now have to show that 
$$\theta_v\otimes\Q_\ell : (v^{\perp}\cap N(X))\otimes\Q_\ell\ra NS(\mathcal M_H(v))\otimes\Q_\ell$$
is an isomorphism. Let $H_{\mathcal M}$ be the homological equivalence class of an ample divisor on $\mathcal M_H(v)$ and consider the composition $\phi_\ell$
\[
\xymatrix{(v^{\perp}\cap N(X))\otimes\Q_\ell\ar[r]^{\theta_v} & NS(\mathcal M_H(v))\otimes\Q_\ell\ar[r]^{c_1} & H^2(\mathcal M_H(v)_{\overline k}, \Q_{\ell}(1))\ar[d]^{\cup H_{\mathcal M}^{2n-2}}\\
v^{\perp}\otimes\Q_\ell & & H^{4n-2}(\mathcal M_H(v)_{\overline k}, \Q_{\ell}(1))\ar[ll]^{\theta_{v, \ell}^{\vee}}
}
\]
where 
$$\theta_{v, \ell}^{\vee} : H^{2n-2}(\mathcal M_H(v), \Z_{\ell}(1))\ra v^{\perp}$$
is the Poincar\'e dual of $\theta_{v, \ell}$. Then $\phi_\ell$ is injective by the hard Lefschetz theorem, $(v)$ and $(vi)$. Furthermore, it sends $(v^{\perp}\cap N(X))\otimes\Q_\ell$ into itself since it is induced by an algebraic correspondence and is Galois-equivariant. As a consequence, it induces an automorphism of $(v^{\perp}\cap N(X))\otimes\Q_\ell$. Furthermore, by the same argument, the composition 
\[
\xymatrix{NS(\mathcal M_H(v))\otimes\Q_\ell\ar[r] & H^2(\mathcal M_H(v)_{\overline k}, \Q_{\ell}(1))\ar[r] & H^{4n-2}(\mathcal M_H(v)_{\overline k}, \Q_{\ell}(1))\ar[r] & v^{\perp_{\ell}}\otimes\Q_\ell
}
\]
is injective, and maps into $(v^{\perp}\cap N(X))\otimes\Q_\ell$. This implies that $\theta_v\otimes\Q_\ell$ is surjective and concludes the proof.
\end{proof}

Under suitable assumptions on the characteristic of the base field and the $K3$ surface $X$, we can also both describe the de Rham cohomology groups of $\mathcal M_H(v)$ and extend the description of the N\'eron-Severi group of $\mathcal M_H(v)$ to some non-algebraically closed fields.
\begin{proposition}\label{proposition:de-Rham-for-MH}
Let $k$ be an algebraically closed field of characteristic $p>0$ and let $(X, H)$ be a polarized $K3$ surface over $k$. Let $v$ be a Mukai vector on $X$ satisfying condition $(C)$. Let $W$ be the ring of Witt vectors of $k$. 

Assume that the triple $(X, v, H)$ lifts to a projective $K3$ surface over $W$. Then the Hodge to de Rham spectral sequence of $\mathcal M_H(v)$ degenerates at $E_1$ if $p>v^2+2$. In general, the Hodge numbers $h^{p,q}=H^q(\mathcal M_H(v), \Omega^p_{X/k})$ of $\mathcal M_H(v)$ satisfy the equalities
\begin{enumerate}
\item $h^{1, 0}=h^{0,1}=0$ if $p>2$;
\item $h^{2,0}=h^{0,2}=1$ and $h^{1,1}=21$ if $p>3$.
\end{enumerate}
\end{proposition}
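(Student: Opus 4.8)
The plan is to use the hypothesis that $(X,v,H)$ lifts to $W$ in order to lift the whole moduli space and then compare the special and generic fibers. Writing $\mathcal X\ra\Spec W$ for the lifted $K3$ surface and lifting $v$ and $H$ as well, the relative moduli space $\mathcal M:=\mathcal M_H(\mathcal X,v)\ra\Spec W$ is smooth and projective, exactly as in the proof of Theorem \ref{thm:general-properties-of-moduli} via \cite{Langer04b} and \cite{Mukai84}; its special fiber is $\mathcal M_H(v)$, of dimension $2n=v^2+2$, and its geometric generic fiber is a characteristic-zero irreducible holomorphic symplectic variety of $K3^{[n]}$-type with $n=(v^2+2)/2\geq 2$ (condition $(C)$ forces $v^2\geq 2$). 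I record the inputs I will need: the generic fiber has $h^{1,0}=h^{0,1}=0$, $h^{2,0}=h^{0,2}=1$, $h^{1,1}=21$; by smooth and proper base change its $\ell$-adic Betti numbers ($\ell\neq p$) agree with those of $\mathcal M_H(v)$, so $b_1=0$ (also obtained directly in the proof of Theorem \ref{thm:general-properties-of-moduli}) and $b_2=23$; and $\mathcal M_H(v)$ carries a nowhere-degenerate $2$-form by Theorem \ref{thm:general-properties-of-moduli}$(i)$, so that $h^{2,0}(\mathcal M_H(v))\geq 1$.

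For the first assertion I would reduce the $W$-lift modulo $p^2$ to obtain a lift of $\mathcal M_H(v)$ to $W_2(k)=W/p^2$ and invoke the Deligne--Illusie degeneration theorem: a smooth projective variety over $k$ that lifts to $W_2(k)$ and has dimension strictly less than $p$ has degenerate Hodge--de Rham spectral sequence. Since $\dim\mathcal M_H(v)=v^2+2<p$ is exactly the hypothesis $p>v^2+2$, this yields the degeneration statement.

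For the low-degree Hodge numbers I would use instead the unconditional, partial form of the Deligne--Illusie result, namely the decomposition of the truncated de Rham complex $\tau_{<p}\Omega^\bullet_{\mathcal M_H(v)/k}$, which gives degeneration of the Hodge--de Rham spectral sequence in every total degree $n\leq p-1$, i.e.\ $\dim_k H^n_{\mathrm{dR}}(\mathcal M_H(v)/k)=\sum_{a+b=n}h^{a,b}$ for such $n$. On the other hand, the crystalline--de Rham comparison (applied to the lift $\mathcal M/W$) identifies $H^n_{\mathrm{dR}}(\mathcal M_H(v)/k)$ with $H^n_{\mathrm{crys}}(\mathcal M_H(v)/W)\otimes_W k$ up to the $p$-torsion of $H^{n+1}_{\mathrm{crys}}$, whence $\dim_k H^n_{\mathrm{dR}}(\mathcal M_H(v)/k)=b_n+t_n+t_{n+1}$, where $t_m$ measures the torsion of $H^m_{\mathrm{crys}}(\mathcal M_H(v)/W)$. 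The crucial input is that this crystalline torsion vanishes in low degrees: for a $W_2$-liftable smooth projective variety, $H^m_{\mathrm{crys}}$ is torsion-free for $m\leq p-1$. Granting this, killing both $t_n$ and $t_{n+1}$ requires $n+1\leq p-1$, i.e.\ $n\leq p-2$, which is precisely $p>2$ for $n=1$ and $p>3$ for $n=2$; in those ranges one obtains $\sum_{a+b=n}h^{a,b}=b_n$.

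Finally I would separate the individual Hodge numbers. In degree $1$ this reads $h^{1,0}+h^{0,1}=b_1=0$ with both terms non-negative, forcing both to vanish (valid for $p>2$). In degree $2$, upper semicontinuity of Hodge numbers in the family $\mathcal M\ra\Spec W$ gives $h^{2,0}\geq 1$ (also from the symplectic form), $h^{0,2}\geq 1$ and $h^{1,1}\geq 21$ by comparison with the generic fiber, while the degree-$2$ identity gives $h^{2,0}+h^{1,1}+h^{0,2}=b_2=23$; since the lower bounds already sum to $23$, all three must be equalities (valid for $p>3$). The main obstacle is exactly the crystalline torsion-freeness in degrees $\leq p-1$: the degeneration statements only control the $k$-dimensions of de Rham cohomology, whereas passing to the Betti numbers $b_n$ requires ruling out $p$-torsion in $H^2_{\mathrm{crys}}$ and $H^3_{\mathrm{crys}}$, and it is precisely the degree in which torsion must be excluded that produces the numerical thresholds $p>2$ and $p>3$.
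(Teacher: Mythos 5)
Your overall architecture is the same as the paper's: lift $(X,v,H)$ to $W$, deduce a smooth projective lift of $\mathcal M_H(v)$, apply Deligne--Illusie for the degeneration statements, and then identify the low-degree Hodge numbers with those of the characteristic-zero fiber by controlling $p$-torsion in $H^2_{\mathrm{crys}}$ and $H^3_{\mathrm{crys}}$ via the universal coefficient theorem. You correctly isolate the crux -- torsion-freeness of crystalline cohomology in degrees $2$ and $3$ -- but the justification you offer for it is a gap.

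You assert that for a $W_2$-liftable smooth projective variety, $H^m_{\mathrm{crys}}$ is torsion-free for $m\leq p-1$. No such general theorem exists, and the statement is false: liftability to $W_2$ (or even to $W$) together with Hodge--de Rham degeneration controls only the $k$-dimension of $H^m_{\mathrm{dR}}$, which equals $b_m+t_m+t_{m+1}$ and is perfectly compatible with nonzero torsion. Concretely, a smooth projective surface over a number field with $p$-torsion in $H^2(\,\cdot\,,\Z)$ (e.g.\ a Godeaux-type quotient with $\pi_1=\Z/p$) and good reduction at a prime above $p>2$ gives a special fiber that lifts to $W$, has dimension $2<p$, and yet has $p$-torsion in $H^2_{\mathrm{crys}}$. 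So the torsion vanishing must be proved for the specific variety $\mathcal M_H(v)$, and this is where the real content of the paper's proof lies: by the Fontaine--Messing integral comparison theorem, it suffices to show that the $p$-adic \'etale cohomology of the generic fiber of the lift is torsion-free in degrees $2$ and $3$; since that fiber is deformation-equivalent to $S^{[n]}$ for $S$ a complex projective $K3$ surface, one reduces to showing that $H^2(S^{[n]},\Z)$ is torsion-free (Beauville) and that $H^3(S^{[n]},\Z)=0$, the latter proved by an explicit analysis of the Hilbert--Chow morphism over the open locus $S^{(n)}_*$ whose complement has codimension $2$. Without this geometric input your argument does not close; everything downstream (the splitting $h^{1,0}+h^{0,1}=b_1$ and the semicontinuity count $h^{2,0}+h^{1,1}+h^{0,2}=b_2=23$) is fine once the torsion vanishing is in hand.
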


\begin{proof}
As above, a projective lift of $(X, v, H)$ to $W$ induces a projective lift $\mathcal M_H(\mathcal X, v)$ of $\mathcal M_H(v)$ to $W$. By the main result of \cite{DeligneIllusie87}, and since $p>v^2+2=\mathrm{dim}\,\mathcal M_H(v)$, this implies that the Hodge to de Rham spectral sequence of $\mathcal M_H(v)$ degenerates at $E_1$.

If $p>2$, the result of \cite{DeligneIllusie87} still shows that $E_1^{p,q}=E_{\infty}^{p,q}$ if $p+q=1$, and if $p+q=2$ if $p>3$.

The statement regarding the Hodge numbers can be rephrased as saying that the Hodge numbers of $\mathcal M_H(v)$ are the same as those of $\mathcal M_H(\mathcal X_{\eta}, v)$, where $\mathcal X_{\eta}$ is the generic fiber of $\mathcal X$ over $W$. By the universal coefficient theorem for crystalline cohomology, it is enough to show that the second and third crystalline cohomology groups of $\mathcal M_H(v)$ are torsion-free. By the integral comparison theorem of Fontaine-Messing \cite{FontaineMessing87}, this is the case as soon as the corresponding $p$-adic cohomology groups of $\mathcal M_H(\mathcal X_{\eta}, v)$ are torsion-free. Since $\mathcal M_H(\mathcal X_{\eta}, v)$ is deformation-equivalent to a Hilbert scheme of points on a $K3$ surface in characteristic zero, it suffices to show that if $S$ is any projective complex $K3$ surface, then $H^2(S^{[n]}, \Z)$ and $H^3(S^{[n]}, \Z)$ are both torsion-free. 

The fact that $H^2(S^{[n]}, \Z)$ is torsion-free is proved in \cite[Remarque after Proposition 6]{Beauville83}. We now show that $H^3(S^{[n]}, \Z)=0$.

Following \cite{Beauville83}, let $S^{(n)}$ be the $n$-fold symmetric product of $S$, and let $\epsilon : S^{[n]}\ra S^{(n)}$ be the Hilbert-Chow morphism. Let $\pi : S^n\ra S^{(n)}$ be the natural map. Let $D$ be the diagonal in $S^{(n)}$, that is, the locus of elements $x_1+\ldots+ x_n$ such that $x_i=x_j$ for some $i\neq j$, and let $D_\ast$ be the open subset of $D$ consisting of zero-cycles of the form $2x_1+\ldots+ x_{n-1}$ where the $x_i$ are all distinct. We define $S^{(n)}_\ast =S^{(n)}\setminus (D\setminus D_\ast)$, $S^{[n]}_*=\epsilon^{-1}(S^{(n)}_\ast)$ and $S^{n}_*=\pi^{-1}(S^{(n)}_\ast)$. Then by \cite[Section 6]{Beauville83}, $S^{[n]}\setminus S^{[n]}_\ast$ has codimension $2$ in $S^{[n]}$ and $S^{[n]}$ is the quotient by the symmetric group of the blow-up of $S^n_\ast$ along the diagonal $\Delta_*=\pi^{-1}(D_\ast)$.

From the description above, it is straightforward to check that $H^3(S^{[n]}_\ast, \Z)=0$. Since $S^{[n]}\setminus S^{[n]}_\ast$ has codimension $2$ in $S^{[n]}$, the restriction morphism 
$$H^3(S^{[n]}, \Z)\ra H^3(S^{[n]}_\ast, \Z)$$
is injective, which shows the result.
\end{proof}

\begin{corollary}\label{corollary:lifting-to-W}
Let $k$ be a field of characteristic $p>2$ that is either finite or algebraically closed and let $(X, H)$ be a polarized $K3$ surface over $k$. Let $v$ be a Mukai vector on $X$ satisfying condition $(C)$. Let $W$ be the ring of Witt vectors of $k$. 

Assume that the triple $(X, v, H)$ lifts to a projective $K3$ surface $\mathcal X$ over $W$. Let $\eta$ be the generic point of $\mathrm{Spec}\, W$. 

Then there exists an isometry
$$\theta_{v, \eta} : v^{\perp}\cap N(\mathcal X_{\eta})\ra NS(\mathcal M_H(\mathcal X_{\eta}, v))$$
such that the following diagram commutes
\[
\xymatrix{
v^{\perp}\cap N(\mathcal X_{\eta}) \ar[r]^{\theta_{v, \eta}} \ar[d] & NS(\mathcal M_H(\mathcal X_{\eta}, v))\ar[d]\\
v^{\perp}\cap N(X) \ar[r]^{\theta_{v}} & NS(\mathcal M_H(X, v))
}
\]
where the vertical maps are the specialization maps.
\end{corollary}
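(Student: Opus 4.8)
The plan is to produce $\theta_{v,\eta}$ as the restriction to the generic fiber of the Mukai-type correspondence that already realizes the isometries of Theorem~\ref{thm:general-properties-of-moduli}, and then to address integrality over the non-closed field $K=\mathrm{Frac}(W)$ by a descent argument parallel to the finite-field case treated in item $(vi)$ of that theorem.

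First I would set up the global correspondence. Since $v$ satisfies condition $(C)$, the relative moduli space $\mathcal M:=\mathcal M_H(\mathcal X, v)$ is a fine moduli space and carries a universal sheaf $\mathcal E$ on $\mathcal X\times_W\mathcal M$. Its Mukai vector $\mathrm{ch}(\mathcal E)\sqrt{\mathrm{td}}$ defines an algebraic correspondence over $W$ which, fiber by fiber, induces the isometries $\theta_{v,\ell}$ of Theorem~\ref{thm:general-properties-of-moduli}$(v)$. By the smooth and proper base change theorems the cospecialization isomorphisms between $H^2(\mathcal M_{\overline\eta},\Z_\ell(1))$ and $H^2(\mathcal M_{\overline k},\Z_\ell(1))$ are compatible with this correspondence, and the lattice $v^\perp\cap N$ is carried into itself along the specialization $N(\mathcal X_\eta)\hra N(X)$ because $v$ lifts. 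Consequently the two integral isometries $\theta_{v,\overline\eta}$ and $\theta_{v,\overline k}=\theta_v$ furnished by Theorem~\ref{thm:general-properties-of-moduli}$(vi)$ over the algebraically closed fibers fit into a single commuting diagram of specialization maps; this already yields the statement after base change to $\overline k$ and $\overline\eta$.

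It remains to descend $\theta_{v,\overline\eta}$ to an \emph{integral} map defined over $K$. Fix $\alpha\in v^\perp\cap N(\mathcal X_\eta)$. Rationally, the correspondence over $K$ sends $\alpha$ into $NS(\mathcal M_H(\mathcal X_\eta, v))\otimes\Q$, and its image in $NS(\mathcal M_{\overline\eta})$ equals the integral, $\mathrm{Gal}(\overline K/K)$-invariant class $\theta_{v,\overline\eta}(\alpha)$, the invariance holding because $\alpha$ and $\mathcal E$ are defined over $K$. To see that this class lifts to $NS(\mathcal M_H(\mathcal X_\eta, v))$ itself, I would run the argument of Theorem~\ref{thm:general-properties-of-moduli}$(vi)$: the geometric fibers are simply connected, so $H^1(\mathcal M_{\overline\eta},\Z_\ell)=0$, while $H^2(\mathcal M_{\overline\eta},\Z_\ell)$ and $H^3(\mathcal M_{\overline\eta},\Z_\ell)$ are torsion-free by the computation in the proof of Proposition~\ref{proposition:de-Rham-for-MH}, applied in characteristic zero to a deformation of the Hilbert scheme of a $K3$ surface. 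The Kummer exact sequence then shows that the cycle class map $\Pic(\mathcal M_H(\mathcal X_\eta, v))\otimes\Z_\ell\to H^2(\mathcal M_{\overline\eta},\Z_\ell(1))^{\mathrm{Gal}(\overline K/K)}$ has torsion-free cokernel, using that $K$ has $\ell$-cohomological dimension at most $2$ so that Hochschild-Serre yields surjectivity onto the Galois invariants. Galois invariance of $\theta_{v,\overline\eta}(\alpha)$ together with torsion-freeness then forces it into the image of $\Pic$, which defines $\theta_{v,\eta}(\alpha)$; that $\theta_{v,\eta}$ is an isometry and makes the square commute is inherited from the corresponding properties over $\overline\eta$ and $\overline k$ via the $\Q$-coefficient compatibility of the correspondence.

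The main obstacle is precisely this final integral descent. Over the algebraically closed fibers everything is supplied by Theorem~\ref{thm:general-properties-of-moduli}, but $K=\mathrm{Frac}(W)$ is neither finite nor algebraically closed, and the term $E_2^{2,0}=H^2(\mathrm{Gal}(\overline K/K),\Z_\ell(1))$ in the Hochschild-Serre spectral sequence -- nonzero when $k$ is finite, so that $K$ is local -- obstructs a naive transcription of the finite-field argument. The delicate point will be to check that this Brauer-type contribution does not create torsion in the cokernel of the cycle class map for the specific classes $\theta_{v,\overline\eta}(\alpha)$. I expect to handle this either by a saturation argument combining the $\Q$-coefficient isometry of Theorem~\ref{thm:general-properties-of-moduli}$(vi)$ with the torsion-freeness of $H^2$, or by exhibiting enough rational structure on $\mathcal M_H(\mathcal X_\eta, v)$ -- a polarization, or a $K$-rational structure coming from the universal sheaf -- to kill the obstruction.
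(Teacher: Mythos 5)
Your setup --- the relative correspondence over $W$ coming from the universal sheaf, the compatibility of $\theta_{v,\ell}$ with cospecialization, and the integral statements over the two algebraically closed fibers supplied by Theorem \ref{thm:general-properties-of-moduli}$(vi)$ --- is sound and matches the paper's starting point. But the heart of the corollary is exactly the step you leave open: descending integrality from $\overline\eta$ to $\eta$. Your route through Hochschild--Serre and the Kummer sequence over $K=\mathrm{Frac}(W)$ runs into the term $H^2(\mathrm{Gal}(\overline K/K),\Z_\ell(1))\simeq\Z_\ell$ (when $k$ is finite, $K$ is a local field), so the finite-field argument of $(vi)$ --- where the analogous obstruction group vanishes for cohomological-dimension reasons --- does not transcribe: a Galois-invariant algebraic class on $\mathcal M_{\overline\eta}$ may a priori carry a Brauer-type obstruction to coming from a line bundle defined over $K$. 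You correctly identify this obstacle, but you only ``expect'' to handle it by a saturation argument or by exhibiting extra rational structure; as written the proof is therefore incomplete at its only nontrivial point.

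The paper avoids Galois descent over $K$ altogether. It takes the rational isometry $\theta_{v,\eta}\otimes\Q$ from the proof of Theorem \ref{thm:general-properties-of-moduli}$(vi)$ and then exploits the specialization map $NS(\mathcal M_H(\mathcal X_\eta,v))\ra NS(\mathcal M_H(X,v))$: by \cite{ElsenhansJahnel11}, the vanishing $H^1(\mathcal M_H(v),\mathcal O_{\mathcal M_H(v)})=0$ established in Proposition \ref{proposition:de-Rham-for-MH} (this is where $p>2$ enters) forces the cokernel of this specialization map to be torsion-free. Given $\alpha\in v^{\perp}\cap N(\mathcal X_\eta)$, some multiple $N\theta_{v,\eta}(\alpha)$ is integral upstairs and specializes to $N\theta_v(\alpha_0)$, while $\theta_v(\alpha_0)$ is already integral downstairs by $(vi)$ (here the hypothesis that $k$ be finite or algebraically closed is used); torsion-freeness of the cokernel, together with injectivity of specialization on N\'eron--Severi groups, then forces $\theta_{v,\eta}(\alpha)$ itself to be integral. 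Any repair of your argument would in effect have to reprove a statement of this strength, so the cleanest fix is to replace the Galois-cohomological descent over $K$ by this specialization argument to the special fiber.
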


\begin{proof}
By the proof of $(vi)$ in Theorem \ref{thm:general-properties-of-moduli}, we know that there exists an isometry
$$\theta_{v, \eta} : (v^{\perp}\cap N(\mathcal X_{\eta}))\otimes\Q\ra NS(\mathcal M_H(\mathcal X_{\eta}, v))\otimes\Q$$
making the analog of the diagram above commute. We need to show that $\theta_{v, \eta}$ sends $v^{\perp}\cap N(\mathcal X_{\eta})$ to $NS(\mathcal M_H(\mathcal X_{\eta}, v))$. 

By \cite{ElsenhansJahnel11} -- which is stated over $\Z_p$ but whose proof extends verbatim to our setting over $W$ -- the equality $H^1(\mathcal M_H(v), \mathcal O_{\mathcal M_H(v)})=0$ proved in Proposition \ref{proposition:de-Rham-for-MH} implies that the cokernel of the specialization map 
$$NS(\mathcal M_H(\mathcal X_{\eta}, v))\ra NS(\mathcal M_H(X, v))$$
is torsion-free, which shows the result.
\end{proof}

\begin{corollary}\label{corollary:comparison-of-discriminants}
Let $k$ be an algebraically closed field of characteristic $p$, and let $X$ be a $K3$ surface over $k$. Let $H$ be a polarization on $X$, and let $v$ be a Mukai vector on $X$ that satisfies condition $(C)$ of Definition \ref{def:fine-moduli-space}. If $p>0$, we can find nonnegative integers $\lambda$ and $t$ such that $0<\lambda\leq v^2$ and 
$$|\mathrm{disc}(NS(X))| = p^t\lambda |\mathrm{disc}(NS(\mathcal M_H(v)))|.$$
If $p=0$, then
$$|\mathrm{disc}(NS(X))| \leq v^2|\mathrm{disc}(NS(\mathcal M_H(v)))|.$$

\end{corollary}

\begin{proof}
We treat the case where $p>0$. By Theorem \ref{thm:general-properties-of-moduli}, $(vi)$ and $(vii)$, we have an injective isometry
$$\theta_v : v^{\perp}\cap N(X)\ra NS(\mathcal M_H(v)),$$
where $v^{\perp}$ is the orthogonal of $v$ in $N(X)=\Z\oplus NS(X)\oplus \Z\omega$. The cokernel of the map above is a $p$-primary torsion group. By \cite[Lemma 2.1.1]{LieblichMaulikSnowden14}, we have 
$$|\mathrm{disc}(v^{\perp}\cap N(X))| = p^r |\mathrm{disc}(NS(\mathcal M_H(v)))|$$
for some nonnegative integer $r$. Furthermore, we have a natural injection of lattices with torsion cokernel
$$\Z v\oplus (v^{\perp}\cap N(X))\hookrightarrow N(X).$$
Since the discriminant of $\Z v\oplus v^{\perp}$ is $v^2\mathrm{disc}(v^{\perp}\cap N(X))$, this implies by the same argument that 
$$|\mathrm{disc}(N(X))|\leq v^2|\mathrm{disc}(v^{\perp}\cap N(X))|.$$
Finally, since as a lattice, $N(X)\simeq NS(X)\oplus U$, where $U$ is the hyperbolic plane, we get the result.
\end{proof}
%
%\begin{remark}
%In the situation of the corollary above, if $p>3$, it can be shown using Proposition \ref{proposition:de-Rham-for-MH}, that the map $\theta_v : N(X)\cap v^{\perp}\ra NS(\mathcal M_H(v))$ is an isomorphism -- using an argument similar to Lemma \ref{lemma:comparison-Picard-groups}. As a consequence, one can remove the power of $p$ in the Corollary above.
%\end{remark}

\subsection{Low-degree line bundles on moduli spaces of stable sheaves on $K3$ surfaces}

If $n$ is an integer, let $\Lambda_{2n}$ denote the lattice 
$$\Lambda_{2n} = \langle 2n \rangle\oplus U,$$
where $U$ is the hyperbolic plane.

\begin{proposition}\label{proposition:low-degree-elements-in-lattices}
Let $d$ be a positive integer, and let $\Lambda$ be a rank $2$ positive definite sublattice of $\Lambda_{2d}.$ There exists a positive integer $N$ and nonzero integers $a,b$ such that if $m$ is any positive integer satisfying
\begin{enumerate}[(i)]
\item $m=1[N]$,
\item $m$ is prime to $a$ and $b$, and both $a$ and $b$ are quadratic residues modulo $m$,
\end{enumerate}
then there exists a primitive embedding of $\Lambda$ into $\Lambda_{2md}$.
\end{proposition}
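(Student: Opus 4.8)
The plan is to translate the existence of a primitive embedding into a purely local, genus-theoretic problem following Nikulin's theory of discriminant forms, and then to read off the congruence and quadratic-residue conditions from the local analysis. First observe that $\Lambda_{2md}=\langle 2md\rangle\oplus U$ is an even lattice of signature $(2,1)$ whose discriminant group is the cyclic group $\Z/2md$. Since its rank is $3$ and its discriminant group is cyclic (so $\ell\le\operatorname{rk}-2$), Nikulin's criterion shows it is unique in its genus. A positive-definite rank-$2$ lattice $\Lambda$ therefore admits a primitive embedding into $\Lambda_{2md}$ as soon as (a) the signatures are compatible and (b) local primitive embeddings $\Lambda\otimes\Z_p\hookrightarrow\Lambda_{2md}\otimes\Z_p$ exist at every prime $p$. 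The signature condition is automatic, since the orthogonal complement is forced to be a rank-$1$ negative-definite lattice $\langle-k\rangle$ sitting in the negative part of $\Lambda_{2md}$; concretely, by Nikulin's gluing description a primitive embedding with complement $\langle-k\rangle$ amounts to an even overlattice of $\Lambda\oplus\langle-k\rangle$ whose discriminant form is that of $\Lambda_{2md}$, and its existence is controlled prime by prime.

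Next I would choose $N$. Let $\delta=\disc(\Lambda)$ and take $N$ divisible by $2d\,\delta$ (and, once $a,b$ are fixed below, by $ab$ as well). For any $m\equiv 1\pmod N$, at each prime $p\mid N$ one has $2md\equiv 2d$ modulo a high power of $p$, so $\Lambda_{2md}\otimes\Z_p\cong\Lambda_{2d}\otimes\Z_p$. Since by hypothesis $\Lambda$ is a sublattice of $\Lambda_{2d}$, the resulting embedding $\Lambda\otimes\Z_p\hookrightarrow\Lambda_{2d}\otimes\Z_p$ supplies the required local embedding at all $p\mid N$. At primes dividing neither $N$ nor $m$, both $\Lambda\otimes\Z_p$ and $\Lambda_{2md}\otimes\Z_p$ are unimodular, and a unimodular lattice always embeds primitively into a unimodular lattice of larger rank, so such primes impose no condition.

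The remaining primes are those dividing $m$. Here $m\equiv 1\pmod N$ forces $m$ coprime to $\delta$, so $\Lambda\otimes\Z_p$ is unimodular while $\langle 2md\rangle\oplus U$ has $v_p(2md)=v_p(m)$. Diagonalizing $\Lambda$ over $\Q$ and clearing denominators as $\langle a\rangle\oplus\langle b\rangle$ with $a,b$ positive integers, the local primitive embedding with rank-$1$ complement exists if and only if the unit parts agree in $\Z_p^\times/(\Z_p^\times)^2$; because the normalization $m\equiv 1\pmod N$ has absorbed all the fixed unit factors, this reduces exactly to the requirement that $a$ and $b$ be squares in $\Z_p^\times$, i.e. quadratic residues modulo $p$ (and modulo $p^{v_p(m)}$ when $v_p(m)>1$). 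Imposing this at every $p\mid m$, together with $m$ coprime to $a$ and $b$, is precisely the condition that $a$ and $b$ be quadratic residues modulo $m$. Having solved the local problem at every prime with compatible signatures, genus-uniqueness of $\Lambda_{2md}$ produces the desired global primitive embedding.

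The hard part will be the local computation at the primes $p\mid m$: one must prove that the \emph{only} obstruction to a primitive embedding with a negative-definite rank-$1$ complement is this pair of quadratic-residue conditions, and that the integers $a,b$ and the modulus $N$ can be chosen uniformly, independent of $m$. This is delicate precisely because the embedding lies in the tight regime where the positive signatures of $\Lambda$ and $\Lambda_{2md}$ coincide, leaving no room to absorb local discrepancies in the positive direction; the discriminant bookkeeping $\delta k=2md\,j^2$ (with $j$ the index of $\Lambda\oplus\langle-k\rangle$) must be reconciled with the gluing data at each prime. A secondary point to check is that the prime-by-prime gluing data patch into a single global isotropic subgroup, which is exactly where genus-uniqueness, and the triviality of the relevant rank-$1$ class-group data, enters.
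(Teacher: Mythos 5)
Your framework---Nikulin's theory of discriminant forms, reduction to a rank-one negative-definite orthogonal complement, and genus-uniqueness of the indefinite lattice $\Lambda_{2md}$---is the right one, and is close in spirit to the paper's argument, which works directly with Nikulin's gluing data $(V,W,\gamma,t)$ attached to the given embedding $\Lambda\hookrightarrow\Lambda_{2d}$ and transports it along $\mathrm{Id}\oplus m\,\mathrm{Id}\colon A_{\Lambda}\oplus A_{2d}\to A_{\Lambda}\oplus A_{2md}$. The problem is that the one step you defer as ``the hard part''---the local computation at primes $p\mid m$---is exactly where the content of the proposition lies, and the answer you propose for it is wrong. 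Let $\delta=\disc(\Lambda)$ and let $p$ be an odd prime dividing $m$ (so $p\nmid 2d\delta$). Then $\Lambda\otimes\Z_p$ is unimodular, hence splits off as an orthogonal direct summand of any lattice containing it primitively; by uniqueness of the Jordan decomposition its rank-one complement in $\Lambda_{2md}\otimes\Z_p$ must be the scale-$p^{v_p(m)}$ constituent $\langle 2md\rangle$, and therefore $\Lambda\otimes\Z_p\cong U\otimes\Z_p$. Since rank-two unimodular $\Z_p$-lattices are classified by their determinant modulo squares and $\det U=-1$, the correct local condition is that $-\delta$ be a square modulo $p$; via $2t\delta=2dj^2$ this is equivalent to $-d/t$ being a square, which is exactly what the paper's conditions ``$-d$ and $t$ are quadratic residues modulo $m$'' guarantee. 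Your condition---that the diagonal entries $a,b$ of a rational diagonalization of $\Lambda$ both be squares modulo $p$---forces $\delta\equiv ab\equiv 1$, which contradicts the required $\delta\equiv -1$ whenever $p\equiv 3\pmod 4$. Concretely, for $\Lambda=\langle 2\rangle\oplus\langle 2\rangle\subset\Lambda_2$ and $m=p_1p_2$ with $p_1\equiv p_2\equiv 7\pmod 8$ chosen so that $m\equiv 1\pmod N$, your hypotheses hold ($2$ is a square mod each $p_i$) but no primitive embedding into $\Lambda_{2m}$ exists. So the proof, if completed along your lines, would establish a false statement for your choice of $a,b$; the proposition survives only because one is free to take $a=-d$, $b=t$ instead.

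Two further gaps. First, your local-to-global step is not a citation: in this tight regime (definite rank-one complement) the standard embedding criteria do not apply, and one must check both that the locally prescribed complements are the localizations of a single $\langle -k\rangle$ (true, but only because a rank-one genus has a single class and the square class of $k$ is pinned down by $\delta k=2md\,j^2$) and that the prime-by-prime gluing data assemble, which uses the surjectivity of $O(\Lambda_{2md})\to O(q_{\Lambda_{2md}})$ in addition to genus-uniqueness. Second, at $p=2$ taking $N$ divisible by $2d\delta$ need not force $m\equiv 1\pmod 8$, so $m$ need not be a $2$-adic square and the identification $\Lambda_{2md}\otimes\Z_2\cong\Lambda_{2d}\otimes\Z_2$ can fail; the paper imposes $m\equiv 1\pmod{4dt}$ precisely to control this. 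The paper's proof avoids all of this local-global bookkeeping by exhibiting the new gluing data explicitly: it shows $\Gamma_{\gamma'}^{\perp}/\Gamma_{\gamma'}$ is cyclic of order $2mt$ and reduces the existence of a generator of square $\tfrac{1}{2mt}$ to the single congruence $\lambda^2(md+ty_0^2(m-1))\equiv d\pmod{4mdt}$, which is where the quadratic-residue conditions actually come from.
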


\begin{proof}
We use a result of Nikulin that describes primitive embeddings of even lattices. We describe here its content in our case.

Let $\Lambda$ be any rank $2$ positive-definite lattice. Fix a positive integer $n$. Let $A_{\Lambda}=\Lambda^*/\Lambda$ be the discriminant group of $\Lambda$. It is endowed with a natural quadratic form $q_{\Lambda}$ with values in $\Q/2\Z$. Similarly, let $A_{2n}=\Z/2n\Z$ be the discriminant group of the even lattice $\Lambda_{2n}$, and let $q_{2n}$ be the natural quadratic form on $A_{2n}$. Then $q_{2n}(1)=\frac{1}{2n}\in\Q/2\Z$.

It is proved in \cite[Proposition 1.15.1]{Nikulin79} that primitive embeddings of $\Lambda$ into $\Lambda_{2n}$ are in one-to-one correspondence with the tuples $(V, W, \gamma, t)$ where $V\subset A_{\Lambda}$ and $W\subset A_{2n}$ are subgroups, $\gamma : V\ra W$ is an isomorphism respecting the restrictions of $q_{\Lambda}$ and $q_{2n}$ to $V$ and $W$ respectively and $t$ is a positive integer such that the quadratic form 
$$(q_{\Lambda}\oplus (-q_{2n}))_{|\Gamma_{\gamma}^{\perp}}/\Gamma_{\gamma}$$
is isomorphic to the quadratic form on $\Z/2t\Z$ that sends $1$ to $\frac{1}{2t}$, where 
$$\Gamma_{\gamma}=\{(a,b)\in V\oplus W | \gamma(a)=b\}.$$
Note that $\Gamma_{\gamma}$ is a cyclic group as it can be identified to $W\subset \Z/2n\Z$.

\bigskip

In the setting of the Proposition, the primitive embedding of $\Lambda$ into $\Lambda_{2d}$ corresponds to a tuple $(H, H', \gamma, t)$. Let $m$ be a positive integer such that $m=1[4dt]$ and $m=1[|A_{\Lambda}|]$. Assume also that $m$ is prime to $(2n)!$. Note that multiplication by $m$ is the identity in $A_{\Lambda}$. We will make further assumptions on $m$ later on.

The map
\[
\xymatrix{A_{\Lambda}\oplus A_{2d}\ar[rr]^{\mathrm{Id}\oplus m\mathrm{Id}} & & A_{\Lambda}\oplus A_{2md}}
\]
is injective and respects the quadratic forms $q_{\Lambda}\oplus (-q_{2d})$ on the left and $q_{\Lambda}\oplus (-q_{2md})$ on the right. Indeed, since $m=1[4d]$, we have 
$$q_{2md}(m)=\frac{m^2}{2md}=\frac{m}{2d}=\frac{1}{2d}\in\Q/2\Z.$$

Let $W'$ be the image of $W$ in $A_{2nd}$, and let
$$\gamma' : V\ra W'$$
be the isometry induced by $\gamma$.

By assumption, the group $\Gamma_{\gamma}^{\perp}/\Gamma_{\gamma}$ is isomorphic to $\Z/2t\Z$. We can also find an element $(x_0, y_0)$ of $\Gamma_{\gamma}^{\perp}\subset A_{\Lambda}\oplus A_{2d}$ that maps to a generator of $\Gamma_{\gamma}^{\perp}/\Gamma_{\gamma}$ and such that 
$$q_{\lambda}(x_0)-\frac{y_0^2}{2d}=\frac{1}{2t}\in\Q/2\Z.$$

We see $y_0$ as an integer between $1$ and $2d$. Then we can consider $(x_0, y_0)$ as an element of $A_{\Lambda}\oplus A_{2md}$.

Let $(\alpha, \beta)$ be a generator of $\Gamma_{\gamma}$, where $\alpha\in A_{\Lambda}$ and $\beta\in A_{2d}$ is considered as an integer. Then $(\alpha, m\beta)$ is a generator of $\Gamma_{\gamma'}$. Let $b_{\lambda}, b_{2d}, b_{2md}$ be the bilinear forms with values in $\Q/\Z$ associated with $q_{\Lambda}, q_{2d}$ and $q_{2md}$ respectively. We have
$$b_{\Lambda}(x_0, \alpha)-b_{2d}(y_0, \beta)=b_{\Lambda}(x_0, \alpha)-\frac{y_0\beta}{2d}=0\in\Q/\Z$$
since $(x_0, y_0)\in\Gamma_{\gamma}^{\perp}$. This implies that
$$b_{\Lambda}(x_0, \alpha)-b_{2md}(y_0, m\beta)=b_{\Lambda}(x_0, \alpha)-\frac{my_0\beta}{2md}=0\in\Q/\Z,$$
which shows that $(x_0, y_0)$ belongs to $\Gamma_{\gamma'}^{\perp}$ in $A_{\Lambda}\oplus A_{2md}$.

By construction and since $m$ is prime to $y_0$, the order of $(x_0, y_0)$, seen as an element of $\Gamma_{\gamma'}^{\perp}$, in the group $\Gamma_{\gamma'}^{\perp}/(\Gamma_{\gamma'}^{\perp}\cap\mathrm{Im}(A_{\Lambda}\oplus A_{2d}))$, is exactly $m$. Furthermore, $m(x_0, y_0)$ is the image of the element $(mx_0, y_0)=(x_0, y_0)\in A_{\Lambda}\oplus A_{2d}$, which maps to a generator of $\Gamma_{\gamma}^{\perp}/\Gamma_{\gamma}$ by assumption.

\bigskip

Since $\Gamma_{\gamma}$ and $\Gamma_{\gamma'}$ are canonically isomorphic, the discussion above shows that the group $\Gamma_{\gamma'}^{\perp}/\Gamma_{\gamma'}$ is cyclic of order $2tm$, and has a generator $v$ such that 
$$q(v)=q_{\lambda}(x_0)-\frac{y_0^2}{2md}=\frac{1}{2t}+\frac{y_0^2}{2d}-\frac{y_0^2}{2md}=\frac{1}{2t}+\frac{y_0^2(m-1)}{2md}\in \Q/2\Z,$$
where $q$ is the natural quadratic form on $\Gamma_{\gamma'}^{\perp}/\Gamma_{\gamma'}$.

To show the result -- after adding conditions on $m$ according to the statement of the Proposition -- we need to find a generator $v'$ of $\Gamma_{\gamma'}^{\perp}/\Gamma_{\gamma'}$ such that $q(v')=\frac{1}{2mt}\in\Q/2\Z$. Writing $v'=\lambda v$, we need to find an integer $\lambda$ such that 
\begin{enumerate}[(i)]
\item $\lambda$ is prime to $2mt$;
\item $\lambda^2(\frac{1}{2t}+\frac{y_0^2(m-1)}{2md})-\frac{1}{2mt}\in 2\Z.$
\end{enumerate}
The second condition can be rephrased as the congruence
$$\lambda^2(md+ty_0^2(m-1))-d=0[4mdt].$$
From now on, we only consider integers $\lambda$ such that $\lambda=1[4dt]$. Since by assumption $m=1[4dt]$, this implies that the condition above is always satisfied modulo $4dt$. As a consequence, we only have to consider the condition modulo $m$, which becomes 
$$\lambda^2ty_0^2+d=0[m].$$
Note that, as above, $y_0$ is prime to $m$. Choosing $m$ such that both $-d$ and $t$ are both quadratic residues modulo $m$, this shows that we can find a suitable $\lambda$, and concludes the proof. 
\end{proof}

\begin{theorem}\label{thm:Zarhins-trick}
Let $k$ be a field, and let $d$ be a positive integer. Then there exists a positive integer $r$, a positive integer $N$ and nonzero integers $a,b$ such that if $(X, H)$ is a polarized $K3$ surface of degree $2md$ over $k$, where $m$ is any positive integer satisfying 
\begin{enumerate}[(i)]
\item $m=1[N]$,
\item $m$ is prime to $a$ and $b$, and both $a$ and $b$ are quadratic residues modulo $m$,
\end{enumerate}
then there exists a Mukai vector $v$ on $X$ satisfying condition $(C)$ such that 
\begin{enumerate}[(i)]
\item $c_1(v)$ is proportional to $c_1(H)$;
\item The moduli space $\mathcal M_H(v)$ has dimension $4$;
\item There exists a line bundle $L$ on $\mathcal M_H(v)$ satisfying $c_1(L)^{4}=r$ and $q(L)>0$.
\end{enumerate}

If $n$ is any integer not divisible by $2$ or $3$, we can assume that $q(L)$ is an integer prime to $n$, and that there exists an ample line bundle $A$ on $\mathcal M_H(v)$ such that $q(A)$ is an integer prime to $n$. If furthermore $k$ is algebraically closed or finite, the same result holds even when $n$ is divisible by $3$.

Finally, assume that $k$ has characteristic $p>2$, and let $W$ be the ring of Witt vectors of an algebraic closure $\overline k$ of $k$. If the pair $(X_{\overline k}, H)$ lifts to $W$, then we can assume that the triple $(\mathcal M_H(v)_{\overline k}, L, A)$ lifts to $W$. 
\end{theorem}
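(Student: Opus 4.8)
The plan is to translate the statement into a lattice-theoretic problem inside the Mukai lattice of $X$ and then feed it into Proposition \ref{proposition:low-degree-elements-in-lattices}. The key observation is that for any polarized $K3$ surface $(X,H)$ of degree $2md$ the rank-three sublattice $\Z\oplus\Z c_1(H)\oplus\Z\omega$ of $N(X)$ is isometric, for the Mukai pairing, to $\Lambda_{2md}=\langle 2md\rangle\oplus U$, with $c_1(H)$ spanning $\langle 2md\rangle$ (since $c_1(H)^2=H^2=2md$) and $\{1,\omega\}$ spanning $U$ (since $\langle 1,\omega\rangle=-1$). As $\dim\mathcal M_H(v)=v^2+2$, asking for a four-dimensional moduli space amounts to $v^2=2$; and as $\mathcal M_H(v)$ is then of $X^{[2]}$-type, formula (\ref{eq:Beauville-Bogomolov}) with $2n=4$ reads $\alpha^4=3\,q(\alpha)^2$. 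So producing $L$ with $c_1(L)^4=r$ and $q(L)>0$ is the same as producing a class of fixed positive square $q_0$, with $r=3q_0^2$.

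With this in mind I would first, for the given $d$, write down by hand inside $\Lambda_{2d}\cong\Z\oplus\Z c_1(H)\oplus\Z\omega$ two orthogonal vectors: a primitive rank-one vector $v$ with $v^2=2$ (for instance $v=1+c_1(H)+(d-1)\omega$), and a vector $w\perp v$ with $w^2=q_0>0$, where $q_0$ will be adjusted below. These span a rank-two positive-definite lattice $\Lambda$; after replacing it by its saturation I may assume $\Lambda$ is primitive. Applying Proposition \ref{proposition:low-degree-elements-in-lattices} to $\Lambda$ yields $N,a,b$ such that, whenever $m$ satisfies (i) and (ii), there is a primitive embedding $\Lambda\hookrightarrow\Lambda_{2md}\cong\Z\oplus\Z c_1(H)\oplus\Z\omega\subset N(X)$. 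The images of $v$ and $w$ are then orthogonal vectors in $N(X)$ of squares $2$ and $q_0$, with first Chern classes proportional to $c_1(H)$; crucially $r=3q_0^2$, $N$, $a$, $b$ depend only on $d$ (and, below, on $n$), not on $m$. Setting $L=\theta_v(w)$ through Theorem \ref{thm:general-properties-of-moduli}(vi) gives $q(L)=w^2=q_0>0$ and $c_1(L)^4=3q_0^2=r$.

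The first point requiring care, and the step I expect to be the main obstacle, is to ensure the image of $v$ is a genuine Mukai vector satisfying condition $(C)$. Primitivity survives a primitive embedding, and proportionality of $c_1$ to $c_1(H)$ is automatic since we work inside $\Z\oplus\Z c_1(H)\oplus\Z\omega$; the delicate part is positivity of the rank together with the coprimality in $(C)(ii)$. I would handle both by replacing the image of $v$, after the embedding, by a representative of rank one: concretely, by tracking the gluing data in the proof of Proposition \ref{proposition:low-degree-elements-in-lattices}, and if needed composing with an isometry of $\Lambda_{2md}$ and with $v\mapsto -v$, I would arrange that the image of $v$ is the explicit divisibility-one vector $1+c_1(H)+(md-1)\omega$, for which $(C)(ii)$ is automatic (a rank-one Mukai vector always satisfies it). Verifying that the image of $v$ really lands in the orbit of this rank-one vector — equivalently that it has divisibility one and that a single orthogonal-group orbit is involved in $\Lambda_{2md}$ — is where the bookkeeping concentrates.

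It remains to arrange the coprimality statements, the ample class, and the lifting. For the former I would choose $q_0=w^2$ coprime to $n$ from the outset, which is possible because the binary form $v^\perp\cap\Lambda_{2d}$ represents values in enough residue classes modulo $n$; the hypotheses on $n$ (not divisible by $2$ or $3$, with the case $3\mid n$ allowed only over algebraically closed or finite fields) are exactly what is needed to solve the resulting congruences, in view of the Fujiki factor $3$ and the denominators $(2n)!/(n!\,2^n)$ in (\ref{eq:Beauville-Bogomolov}), the case $3\mid n$ requiring the integral control over $\theta_v$ available from Theorem \ref{thm:general-properties-of-moduli}(vi)--(vii) over those fields. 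For the ample class $A$, I would start from any ample class on $\mathcal M_H(v)$ and perturb it within the open ample cone by classes $\theta_v(\cdot)$, using that $NS(\mathcal M_H(v))\otimes\Q$ is spanned by the image of $\theta_v$ when $k$ is algebraically closed or finite (Theorem \ref{thm:general-properties-of-moduli}(vii)); since $q$ takes many residues modulo $n$ on an open cone, one lands on an integer prime to $n$. Finally, if $(X_{\overline k},H)$ lifts to $W$ then so does $v$, because $c_1(v)\in\Z c_1(H)$; the relative moduli space lifts $\mathcal M_H(v)_{\overline k}$ over $W$ as in the proof of Theorem \ref{thm:general-properties-of-moduli}, and Corollary \ref{corollary:lifting-to-W} (which needs $p>2$) shows that $L$ and $A$, lying in the image of $\theta_v$, lift as well, with $A$ remaining ample by openness of ampleness in the family.
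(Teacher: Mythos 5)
There is a genuine gap at exactly the step you flag as "where the bookkeeping concentrates", namely the verification of condition $(C)(ii)$ for the embedded vector $v$. Note that for $v=a+bc_1(H)+c\omega\in\Z\oplus\Z c_1(H)\oplus\Z\omega\simeq\Lambda_{2md}$, the quantity $\gcd(\mathrm{rk}(v),H.c_1(v),\lambda)=\gcd(a,2mdb,c)$ is precisely the divisibility of $v$ in $\Lambda_{2md}$, i.e.\ the generator of the ideal $v\cdot\Lambda_{2md}$. This is an invariant of the isometry group of $\Lambda_{2md}$, so your proposed repair -- composing the embedding with an isometry of $\Lambda_{2md}$ (and possibly $v\mapsto -v$) to move $v$ into the orbit of the rank-one vector $1+c_1(H)+(md-1)\omega$ -- cannot work unless the embedded $v$ already has divisibility $1$. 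But in your setup $\Lambda$ is spanned by $v$ and a vector $w$ \emph{orthogonal} to $v$, so $v\cdot\Lambda=2\Z$, and Proposition \ref{proposition:low-degree-elements-in-lattices} only produces \emph{some} primitive embedding of the abstract lattice $\Lambda$: it gives no control on whether the image of $v$ has divisibility $1$ or $2$ in $\Lambda_{2md}$ (divisibility-$2$ primitive vectors of square $2$ do exist in these lattices, e.g.\ $c_1(H)$ itself when $md=1$). The paper closes this gap at the source: it chooses $\Lambda$ to contain a vector $w$ with $v\cdot w=1$, so that after \emph{any} embedding $\Lambda\hookrightarrow\Lambda_{2md}\subset N(X)$ the relation $v\cdot w=1$ forces $\gcd(\mathrm{rk}(v),H.c_1(v),\lambda)=1$, and then takes $l\in\Lambda$ with $l\cdot v=0$ to define $L=\theta_v(l)$ with $r=3(l^2)^2$. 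The remaining normalization ($\mathrm{rk}(v)$ and $\lambda$ not both zero because $2\mid H.c_1(v)$, then $\mathrm{rk}(v)>0$ after an automorphism of $\Lambda_{2md}$) is the part of your isometry argument that does survive.

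A second, independent omission: for a general field $k$ your construction only produces $L$ on $\mathcal M_H(v)_{\overline k}$ with Galois-invariant Chern class, whereas conclusion (iii) asserts a line bundle on the $k$-scheme $\mathcal M_H(v)$; Theorem \ref{thm:general-properties-of-moduli}(vi) is only available over algebraically closed or finite fields. The paper descends $L$ using the exact sequence $\Pic(\mathcal M_H(v))\to\Pic(\mathcal M_H(v)_{\overline k})^{\mathrm{Gal}(\overline k/k)}\to\mathrm{Br}(k)\to\mathrm{Br}(\mathcal M_H(v))$ together with the degree-$324$ zero-cycle $c_4(\mathcal M_H(v))$, which splits the Brauer obstruction up to multiplication by $324=2^2\cdot 3^4$; one then replaces $L$ by $L^{\otimes 324}$. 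This, and not the Fujiki constant or the denominators in (\ref{eq:Beauville-Bogomolov}), is the actual source of the hypothesis that $n$ be prime to $2$ and $3$ over general fields while $3\mid n$ is allowed over algebraically closed or finite fields. Your treatment of the coprimality of $q(L)$, of the ample class $A$, and of the lifting to $W$ is otherwise in line with the paper's.
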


\begin{proof}
In the lattice $\Lambda_{2d}$, consider a positive-definite rank-$2$ sublattice $\Lambda$ containing elements $v$ and $w$ with $v^2=2$ and $v.w=1$. Let $l$ be an element of $\Lambda$ such that $l.v=0$, and let $r=\frac{(4)!}{(2!)2^2}(l^2)^2=3(l^2)^2$. Note that we can indeed choose $l$ so that $l^2$ is prime to $n$ if $n$ is odd.

By Proposition \ref{proposition:low-degree-elements-in-lattices}, we can find integers $N, a$ and $b$ as above such that if $m$ is any positive integer satisfying the conditions of the theorem, $\Lambda_{2md}$ contains $\Lambda$ as a primitive sublattice.

Let $X$ be any $K3$ surface over $k$ with an ample line bundle $H$ of self-intersection $2md$, with $m$ as above. Then the lattice $N(X)$ of Mukai vectors on $X$ contains the sublattice $\Z\oplus\Z H\oplus \Z\omega\simeq \Lambda_{2md}$. As a consequence, there exists an injection 
$$\Lambda\hookrightarrow \Lambda_{2md}\hookrightarrow N(X).$$
Seeing $v\in\Lambda$ as an element of $N(X)$, write $v=\mathrm{rk}(v)+c_1(v)+\lambda\omega$. By assumption there exists $w\in \Lambda\subset \Z\oplus\Z H\oplus \Z\omega$ such that $v.w=1$. In particular, the vector $v\in N(X)$ is primitive. Furthermore, we have 
$$gcd(\mathrm{rk}(v), c_1(v).H, \lambda)=1.$$

In particular, since $2$ divides $c_1(v).H$, $\mathrm{rk}(v)$ and $\lambda$ cannot both vanish. After composing the embedding of $\Lambda$ into $\Lambda_{2md}$ by a suitable automorphism of $\Lambda_{2md}$, we can assume that $\mathrm{rk}(v)>0$. Since $v^2>0$, this shows that $v$ satisfies condition $(C)$. As a consequence of Theorem \ref{thm:general-properties-of-moduli}, $(i)$, the moduli space $\mathcal M_H(v)$ has dimension $v^2+2=4$.

\bigskip

We first assume that $k$ is algebraically closed or finite. Let $n$ be an odd integer. Then the vector $l\in\Lambda\subset N(X)$ defined above lies in $v^{\perp}\cap N(X)$ by assumption. By Theorem \ref{thm:general-properties-of-moduli}, $(vi)$, we have an injection of lattices 
$$v^{\perp}\cap N(X)\hookrightarrow NS(\mathcal M_H(v)).$$
By $(iii)$ and $(iv)$ of the same theorem, the image of $l$ in $NS(\mathcal M_H(v))$ is the class of a line bundle $L$ on $\mathcal M_H(v)$ such that 
$$c_1(L)^{4}=3(l^2)^2=r.$$
The integer $q(L)$ is prime to $n$.

Let $A_0$ be an ample divisor on $\mathcal M_H(v)$. We can assume that $q(A_0)$ is an integer after raising to a sufficiently large $p$-th power. If $\lambda$ is large enough and $n$ divides $\lambda$, then $A=A_0^{\otimes \lambda}\otimes L$ is ample and $q(A)$ is congruent to $q(L)$ modulo $n$, which implies that $q(A)$ is prime to $n$.

If $k$ is an arbitrary field, the construction above provides a line bundle $L$ on $\mathcal M_H(v)_{\overline k}$ with Galois-invariant first Chern class. This implies that $L$ itself is Galois-invariant. 

Consider the exact sequence 
$$\mathrm{Pic}(\mathcal M_H(v))\ra \mathrm{Pic}(\mathcal M_H(v)_{\overline k})^{\mathrm{Gal}(\overline k/k)}\ra \mathrm{Br}(k)\ra \mathrm{Br}(\mathcal M_H(v)).$$
Since $\mathcal M_H(v)$ is deformation-equivalent to $X^{[2]}$, $c_4(\mathcal M_H(v))$ is a zero-cycle of degree $324=2^2\times 3^4$, see \cite[Remark 5.5]{EllingsrudGottscheLehn01}. Such a zero-cycle induces a map $Br(X)\ra Br(k)$ such that the composition $Br(k)\ra Br(X)\ra Br(k)$ is multiplication by $324$. In particular, the cokernel of the map $\mathrm{Pic}(\mathcal M_H(v))\ra \mathrm{Pic}(\mathcal M_H(v)_{\overline k})^{\mathrm{Gal}(\overline k/k)}$ is killed by multiplication by $324$. This shows that $L^{\otimes 324}$ satisfies the conclusion of the theorem.

\bigskip

Finally, assume that $k$ has characteristic $p>2$, and assume that the pair $(X_{\overline k}, H)$ lifts to a projective $K3$ surface $(\mathcal X, H)$ over $W$. Then since $c_1(v)$ is a multiple of $c_1(H)$, $v$ lifts to $\mathcal X$ as well. The result then follows directly from Corollary \ref{corollary:lifting-to-W}.
\end{proof}

\begin{remark}
Even over an arbitrary field, it is possible to ensure that $q(L)$ and $q(A)$ are prime to $3$ by considering a $6$-dimensional moduli space of sheaves: these have top Chern class of degree $3200=2^7\times 5^2$.
\end{remark}

\section{Finiteness results for holomorphic symplectic varieties}

Theorem \ref{thm:Zarhins-trick} was devoted to constructing irreducible holomorphic symplectic varieties of dimension $4$ -- or, in positive characteristic, reduction of such varieties -- together with a line bundle $L$ of low positive self-intersection. By a theorem of Huybrechts \cite[Corollary 3.10]{Huybrechts99}, either $L$ or its dual is big. We now investigate finiteness results for families of such varieties. 

In characteristic zero, we prove that given positive integers $n$ and $r$, the family of irreducible holomorphic varieties $X$ such that there exists a line bundle $L$ on $X$ with $c_1(L)^{2n}=r$ is birationally bounded. Unfortunately, our proof does not make explicit any of the natural constants involved. It relies on the global period map and the local Torelli theorem. 

Over finite fields of characteristic $p>3$, we show a finiteness result for N\'eron-Severi groups of such varieties. The proof relies on the Kuga-Satake construction as a replacement for the period map.

\subsection{The case of $K3$ surfaces}

Before dealing with higher-dimensional varieties below, we treat the much easier case of $K3$ surfaces. The following result is certainly well-known to experts. The proof is very close to arguments in \cite{SaintDonat74}, but since this paper assumes that the characteristic is odd, we make sure that the statement is correct in arbitrary characteristic.

\begin{proposition}\label{proposition:birational-boundedness-for-K3}
Let $r$ be a positive integer. Then there exist positive integers $N$ and $d$ such that if $k$ is any field, and if $X$ is a $K3$ surface over an algebraically closed field $k$ with a line bundle $L$ such that $L^2=r$, then there exists a line bundle $L'$ on $k$ with $h^0(X, L')\leq N$ such that the complete linear system $|L'|$ induces a birational map from $X$ to a subvariety of $\mathbb P|L'|$ of degree at most $d$.
\end{proposition}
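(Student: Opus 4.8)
The plan is to replace $L$ by a nef class of the same self-intersection and then apply the theory of linear systems on $K3$ surfaces, following \cite{SaintDonat74}, to a fixed small multiple of it. Note first that since $\mathrm{NS}(X)$ is an even lattice, $r=L^2$ is even (otherwise the statement is vacuous), so Riemann--Roch gives $\chi(L)=2+r/2>0$; hence $L$ or $-L$ is effective, and after replacing $L$ by $-L$ if necessary I may assume $h^0(X,L)>0$. I would then arrange that $L$ is nef: the intersection of the nef cone with the positive cone is a fundamental domain for the action of the Weyl group generated by reflections in the classes of $(-2)$-curves, a purely lattice-theoretic fact valid in every characteristic (it uses only Riemann--Roch to produce effective $(-2)$-curves). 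Applying a suitable element of this group, I may replace $L$ by a nef class with $L^2=r$ still; being nef with positive square it is big, and $h^2(X,L)=h^0(X,-L)=0$, so it is effective.

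The key numerical input is that $L\cdot E\geq 1$ for every irreducible curve $E$ with $E^2=0$: indeed $L\cdot E\geq 0$ by nefness, while $L\cdot E=0$ together with $L^2>0$ would force $E^2<0$ by the Hodge index theorem. I would then set $L'=3L$, so that $L'\cdot E\geq 3$ for every such $E$ and $(L')^2=9r$. Now I invoke the description of linear systems on $K3$ surfaces: for a nef and big line bundle $M$, the system $|M|$ is base-point free unless there is a curve $E$ with $E^2=0$ and $M\cdot E=1$, and $\phi_M$ is birational onto a surface of degree $M^2$ unless $M^2=2$, or there is a curve $E$ with $E^2=0$ and $M\cdot E=2$, or $M=2B$ with $B^2=2$ (so $M^2=8$). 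For $M=L'=3L$ all exceptional cases are excluded: $L'\cdot E$ is a positive multiple of $3$, hence never $1$ or $2$; and $(L')^2=9r\geq 18$, hence never $2$ and never $8$. (This is exactly why the factor $3$, rather than $2$, is needed: $2L\cdot E$ could equal $2$, putting us in the hyperelliptic case.) Therefore $|L'|$ is base-point free and $\phi_{L'}$ is a morphism, birational onto its image, a surface of degree $(L')^2=9r$ in $\mathbb{P}|L'|$.

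It remains to bound $h^0(X,L')$. Since $H^1(X,L')=0$ for a nef and big line bundle on a $K3$ surface, and $h^2(X,L')=h^0(X,-L')=0$, Riemann--Roch gives $h^0(X,L')=\chi(L')=2+9r/2$. Taking $N=2+9r/2$ and $d=9r$, both depending only on $r$, concludes the proof.

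The main obstacle --- and the only reason the statement is not a direct citation of \cite{SaintDonat74} --- is that the cited results are proved under the assumption that the characteristic is odd, so I must verify that the base-point-freeness and birationality criteria above, as well as the vanishing $H^1(X,3L)=0$, hold in \emph{every} characteristic. The delicate case is characteristic $2$ (and, for the genus-one pencils entering the argument, characteristic $3$), where a pencil $|E|$ with $E^2=0$ may be quasi-elliptic rather than elliptic, its general fibre being a cuspidal rational curve. I would check that Saint-Donat's arguments, which are essentially numerical once Riemann--Roch and the Hodge index theorem are in hand, survive these phenomena unchanged, and that the required cohomology vanishing on $K3$ surfaces is indeed characteristic-independent.
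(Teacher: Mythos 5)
Your argument is sound in odd characteristic, and there it is essentially the paper's own argument (make $L$ nef via reflections in $(-2)$-classes, then feed a small multiple into Saint-Donat's analysis); your choice of $3L$ even yields cleaner explicit constants $N=2+9r/2$, $d=9r$. But the entire point of this proposition -- as opposed to a citation of \cite{SaintDonat74} -- is characteristic $2$, and that is exactly the step you defer to ``I would check that Saint-Donat's arguments survive unchanged.'' They do not survive unchanged, and this is a genuine gap. The birationality criterion you invoke (``$\phi_M$ is birational unless $M^2=2$, or there is $E$ with $E^2=0$, $M\cdot E=2$, or $M=2B$ with $B^2=2$'') is proved by Saint-Donat by restricting to a \emph{smooth irreducible} member of $|M|$ and running the theory of linear systems on that curve. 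In characteristic $2$ a big, nef, base-point-free linear system on a $K3$ surface can fail to contain any reduced member -- every divisor in $|L|$ may be of the form $2C_{\mathrm{red}}$ -- so the Bertini-type input collapses, and one must also contend with the possibility that $\phi_M$ is generically finite of degree $2$ and \emph{inseparable}, a case invisible to the classical hyperelliptic classification. Asserting the full classification in characteristic $2$ is strictly stronger than the statement (``$\phi_{2L}$ is birational'') that the paper points out is only declared ``well-known'' in \cite{ArtinSwinnertonDyer73} without proof.

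The paper's workaround is worth noting because it avoids the classification entirely: after reducing to the case where $|L|$ has no fixed part and is base point free (the base-point-freeness and fixed-part analysis, unlike the birationality classification, do go through in all characteristics via \cite{HuybrechtsK3book}), it restricts $kL$ to a member $C\in|L|$, which may be non-reduced, and uses the adjunction formula and Riemann--Roch \emph{for non-reduced curves} (as in \cite{Liu02}) together with the surjectivity of $H^0(X,kL)\to H^0(C,kL)$ to show directly that $4L$ restricted to $C$ separates points generically; hence $\phi_{4L}$ is birational onto its image. To repair your proof you would either need to supply a characteristic-$2$ proof of the non-birationality classification (handling non-reduced members and inseparability), or replace your appeal to it by a direct argument of this restriction-to-a-curve type.
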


\begin{proof}
In this proof, we will identify $L$ and $c_1(L)$. We can assume that $L$ is big and nef. Indeed, by \cite[1.10 and p.371]{Ogus79}, see also \cite[Chapter 8 par.2]{HuybrechtsK3book}, there exists a line bundle $L'$ with the same self-intersection as $L$ that is big and nef. In particular, we have $h^1(L)=0$ by \cite[Proposition 3.1]{HuybrechtsK3book}.

By the Riemann-Roch theorem, we have 
$$h^0(L)+h^0(L^{\vee})= \frac{r}{2}+2\geq 3.$$
Since $L$ is nef, this shows that $L$ is effective and $h^0(L)\geq 3$. 

We now rephrase the argument of \cite[Proposition 8.1]{SaintDonat74}. Let $F$ be the fixed part of the linear system $|L|$. Assume that $F\neq 0$. Let $M=L\otimes \mathcal O(-F)$. Then $M^2\geq 0$. By \cite[2.6 and 2.7.4]{SaintDonat74}, which does not make use of any hypothesis on the characteristic, either $M^2>0$ or we can find an irreducible curve $E$ on $X$ with arithmetic genus $1$ such that $M=\mathcal O(mE)$ with $m=h^0(L)-1\geq 2$. Furthermore, we can then find an irreducible rational curve $\Gamma$ in $F$ such that $F.E=1$. 

Let $L'=M$ if $M^2>0$ and $L'=mE+\Gamma$ in the other case. Write $L=L'+\Delta$. Assume that $\Delta\neq 0$. Then by the Hodge index theorem, see \cite[Chapter 1, Remark 2.4 (iii)]{HuybrechtsK3book}, $L'.\Delta>0$. Now $h^0(L')=h^0(L)= \frac{L^2}{2}+2$ and $h^1(L')=0$ by \cite[Lemma 2.2]{SaintDonat74}. This shows that $L'.L'= L.L$. In other words, we have 
$$2L'.\Delta+\Delta^2= 0.$$
However, since $L'.\Delta>0$, this implies that 
$$L.\Delta=L'.\Delta+\Delta^2<0$$
which contradicts the fact that $L$ is nef. This shows that $\Delta=0$, i.e., $L=L'$.

Now up to replacing $L$ by $2L$, it is readily seen that we can assume that $L$ has no fixed part. By \cite[Proposition 2.6]{SaintDonat74} -- see also \cite[Chapter 2, Remark 3.7 (ii)]{HuybrechtsK3book} -- we can write $L=\mathcal O(C)$ where $C$ is an irreducible curve on $X$. Furthermore, we have $h^1(L)=0$ by \cite[Proposition 3.1]{HuybrechtsK3book} again.

The discussion above readily implies that the image of the rational map $\phi_L$ from $X$ to $\mathbb P^{\frac{r}{2}+1}$ induced by the complete linear system $|L|$ has dimension $2$. Furthermore, $\phi_L$ is either birational or has generic degree $2$ onto its image. Indeed, the degree of the image of $\phi_L$ in projective space is at least $\frac{r}{2}$.

It is stated in the literature that $\phi_{2L}$ is birational onto its image. If the characteristic is odd, this follows from the existence of a smooth, irreducible global section of $L$ as in \cite{SaintDonat74}. In general, this is stated as "well-known" in \cite{ArtinSwinnertonDyer73}, after Lemma 5.17. We briefly give an argument that shows that $\phi_{4L}$ is birational onto its image.

The line bundle $L$ is big and net, and the fixed part of $L$ vanishes. By \cite[Chapter 2, Corollary 3.14 (i)]{HuybrechtsK3book}, $L$ is base point free. We assume that $\phi_L$ is of generic degree $2$ onto its image. Let $C$ be an irreducible curve belonging to $|L|$. It might not be possible to choose a reduced $C$ if the characteristic of $k$ is $2$. Let $C_{red}$ be the reduced corresponding curve. Then as cycles on $X$, we have $C=aC_{red}$ where $a=1$ or $a=2$ -- in characteristic $2$, it can happen that $a$ is necessarily $2$. By the adjunction formula -- which holds for non-reduced curves, see \cite[9.1.37]{Liu02}, the arithmetic genus of $C$ is $1+\frac{r}{2}$. 

The exact sequence
$$0\ra\mathcal (k-1)L\ra kL\ra \mathcal(kL)_{|C}\ra 0$$
together with the vanishing of $H^1(X, (k-1)L)$ shows that for any $k>0$ we have a surjection $H^0(X, kL)\ra H^0(C, kL)$. 

Now using the adjunction formula and Riemann-Roch (which both hold for non-reduced curves, see \cite[7.3.17 and 9.1.37]{Liu02}), the usual arguments show that the restriction of $4L$ to $C$ induces an application of degree $1$ onto its image. This shows that $\phi_{4L}$ is birational onto its image.

Since $h^1(X, 4L)=0$ and $4L$ is base point free, this shows that $\phi_{4L}$ induces a birational map from $X$ to a subvariety of degree $r$ of $\mathbb P^N$ with $N=\frac{r}{2}+1$.
\end{proof}

Since $K3$ surfaces are minimal, this shows that the surfaces $X$ as in the proposition above form a bounded family. In particular, we get the following result. 

\begin{corollary}\label{corollary:basic-finiteness}
Let $k$ be a finite field with algebraic closure $\overline k$, and let $r$ be a positive integer. Then there exist finitely many $\overline k$-isomorphism classes of $K3$ surfaces $X$ over $k$ such that there exists a line bundle $L$ on $X_{\overline k}$ with $L^2=r$.

If the characteristic of $k$ is odd, then there exist only finitely many isomorphism classes over $k$ of such $K3$ surfaces.
\end{corollary}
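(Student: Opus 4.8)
The plan is to combine the birational boundedness of Proposition \ref{proposition:birational-boundedness-for-K3} with two further inputs: the minimality of $K3$ surfaces, and the finiteness of the set of points of a finite-type scheme over a finite field. The essential point to keep in mind is that boundedness over $\overline{k}$ alone does \emph{not} suffice: the moduli space of polarized $K3$ surfaces of fixed degree has infinitely many $\overline{\F}_p$-points, so infinitely many geometric isomorphism classes occur in a bounded family. The argument must therefore genuinely exploit that $X$ is defined over the \emph{fixed} finite field $k$, not merely that it carries a line bundle of bounded self-intersection.

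First I would set up the bounded family. By Proposition \ref{proposition:birational-boundedness-for-K3} there are integers $N,d$, depending only on $r$, such that any $K3$ surface $Y$ over an algebraically closed field carrying a line bundle of self-intersection $r$ is birational to a closed subvariety $Z\subset\PP^N$ of degree at most $d$. Such subvarieties have bounded Hilbert polynomial, hence are parametrized by finitely many Hilbert schemes $H_1,\dots,H_s$, each of finite type over the prime field. Since $K3$ surfaces are minimal, a $K3$ surface is the unique smooth minimal model in its birational class; thus $Z_{\overline{k}}$ determines $Y=X_{\overline{k}}$ up to isomorphism, and it suffices to bound the number of relevant subvarieties $Z$.

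The key arithmetic step is to make the entire construction rational over a \emph{fixed} finite extension of $k$. For a $K3$ surface over $\F_q$, geometric Frobenius acts on $\mathrm{NS}(X_{\overline{k}})$ through a finite group whose order divides a constant $M_0$ independent of $X$, the eigenvalues being roots of unity of bounded order (and Frobenius acting semisimply on $H^2$ of a $K3$). Setting $k'=\F_{q^{M_0}}$, Frobenius acts trivially on $\mathrm{NS}(X_{\overline{k}})$ over $k'$, so $\mathrm{NS}(X_{k'})=\mathrm{NS}(X_{\overline{k}})$; since $\mathrm{Br}(k')=0$, the line bundle $L$, together with the big and nef bundle $L'$ produced in the proof of Proposition \ref{proposition:birational-boundedness-for-K3}, descends to $X_{k'}$. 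As $H^0$ commutes with the extension $k'\subset\overline{k}$, the corresponding linear system and its image $Z$ are defined over $k'$, whence $Z\in\bigcup_i H_i(k')$. This is a finite set, because $k'$ is finite and the $H_i$ are of finite type; so there are finitely many $Z_{\overline{k}}$, hence finitely many $\overline{k}$-isomorphism classes of $X_{\overline{k}}$, proving the first assertion.

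For the second assertion I would upgrade from geometric to $k$-rational classes, and this is where the main obstacle lies: boundedness is a geometric statement and must be converted into finiteness of arithmetic forms over $k$. Having fixed one of the finitely many geometric classes $Y$, the surfaces $X/k$ with $X_{\overline{k}}\cong Y$ are the $k$-forms of $Y$, controlled by $H^1(\mathrm{Gal}(\overline{k}/k),\mathrm{Aut}(Y_{\overline{k}}))$. I would rigidify by fixing a polarization: the automorphism group of a \emph{polarized} $K3$ surface is finite, being the stabilizer of an ample class inside the orthogonal group of $\mathrm{NS}$, so the forms of the polarized surface are classified by the cohomology of a finite group and are finite in number. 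The hypothesis $\mathrm{char}\,k$ odd — and the exclusion of characteristic $2$ — enters precisely here, guaranteeing via the arguments of \cite{SaintDonat74} underlying Proposition \ref{proposition:birational-boundedness-for-K3} (for instance the existence of a smooth member of the relevant linear system) that $L'$ and the birational morphism it defines behave well enough for the descent datum and polarized structure to be controlled Galois-equivariantly. The two features that make the whole scheme work are the \emph{uniform} bound $M_0$ on the order of Frobenius on $\mathrm{NS}$, which replaces ``bounded family over $\overline{k}$'' by ``a finite set of $k'$-points of a finite-type scheme,'' and the finiteness of the automorphism group of a polarized $K3$ surface, which tames the twisting away from characteristic $2$.
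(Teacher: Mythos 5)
Your treatment of the first assertion is correct and is essentially the paper's own proof: pass to a finite extension of uniformly bounded degree over which Frobenius acts trivially on $\mathrm{NS}(X_{\overline k})$, descend $L'$ and the image of the associated map using $\mathrm{Br}$ of a finite field being trivial, and conclude from the finiteness of the set of subvarieties of $\mathbb P^N$ of bounded degree over a fixed finite field together with minimality of $K3$ surfaces. (The paper invokes Chow forms where you invoke Hilbert schemes; this is immaterial.)

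The second assertion is where you diverge from the paper and where there is a genuine gap. The paper deduces it from the first assertion by citing \cite[Proposition 2.4.1]{LieblichMaulikSnowden14}; you instead sketch a direct classification of the $k$-forms of each geometric class $Y$. The missing step is the reduction to a \emph{finite} coefficient group. To replace $H^1(\mathrm{Gal}(\overline k/k), \mathrm{Aut}(Y))$ -- whose coefficients may well be an infinite group -- by the cohomology of the finite group $\mathrm{Aut}(Y,M)$ of a polarized pair, you need two facts: (a) every form $X/k$ carries a $k$-rational ample class of degree bounded in terms of $Y$ alone (this is fine, by averaging an ample class over its Frobenius orbit), and (b) the ample classes of a given bounded degree on $Y$ fall into \emph{finitely many orbits} under $\mathrm{Aut}(Y)$. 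Without (b) you are distributing the forms of $Y$ over a possibly infinite collection of polarized pairs $(Y,M)$, and the finiteness of each individual $H^1$ proves nothing. Statement (b) is the content of the cone conjecture for $K3$ surfaces (Sterk's theorem), whose extension to positive characteristic is precisely what the argument of \cite{LieblichMaulikSnowden14} rests on, and it is currently available only for $p$ odd. This -- and not the behaviour of Saint-Donat's linear systems -- is the true source of the odd-characteristic hypothesis: Proposition \ref{proposition:birational-boundedness-for-K3} is stated and proved in the paper in all characteristics, including $2$, so the descent of $L'$ and of the birational morphism is not where characteristic $2$ causes trouble. To repair your argument, either cite \cite[Proposition 2.4.1]{LieblichMaulikSnowden14} as the paper does, or supply the orbit-finiteness statement (b) explicitly.
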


\begin{proof}
After replacing $k$ by a finite extension $K$ of fixed degree, we can assume that any line bundle on $X_{\overline k}$ is defined over $X$. Let $N, d$ and $L'$ be as in Proposition \ref{proposition:birational-boundedness-for-K3}. Then $L'$ is defined over $K$, and so is the image of $X$ under the rational map defined by the complete linear system associated to $L'$. 

The theory of Chow forms shows that there are only finitely many subvarieties of $\mathbb P^N_K$ of degree at most $d$. As a consequence, the number of $K$-birational classes of surfaces $X$ as in the statement is finite. Since $K3$ surfaces are minimal, this shows the first result. 

The second statement is a consequence of the first and of \cite[Proposition 2.4.1]{LieblichMaulikSnowden14}.
\end{proof}

\subsection{A birational version of Matsusaka's big theorem for holomorphic symplectic varieties}

The goal of this section is to prove the following result. It should be seen as a -- weak -- birational version of Matsusaka's big theorem. It will not be used in the proof of the Tate conjecture.

\begin{theorem}\label{thm:Matsusaka-birational}
Let $n$ and $r$ be two positive integers. Then we can find constants $k, N$ and $d$ such that if $X$ is a complex irreducible holomorphic symplectic variety of dimension $2n$ and $L$ is a line bundle on $X$ with $c_1(L)^{2n}=r$ and $q(L)>0$, where $q$ is the Beauville-Bogomolov form, then there exists a line bundle $L'$ with $c_1(L')^{2n}=r$ such that $h^0(X, kL')\leq N$ and such that the complete linear system $|kL'|$ induces a birational map from $X$ to a subvariety of $\mathbb P|kL'|$ of degree at most $d$.

In particular, there exists a scheme $S$ of finite type over $\C$, and a projective morphism $\mathcal X\ra S$ such that if $(X, L)$ is any pair as above, there exists a complex point $s$ of $S$ such that $\mathcal X_s$ is birational to $X$.
\end{theorem}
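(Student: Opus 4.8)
The plan is to reduce the statement about higher-dimensional holomorphic symplectic varieties to a boundedness statement about periods, using the global Torelli theorem of Verbitsky and the structure of the period domain. First I would recall that by a theorem of Huybrechts, since $q(L)>0$, either $L$ or its dual is big; replacing $L$ by its dual if necessary (which does not change $c_1(L)^{2n}$ or the sign of $q$), I may assume $L$ is big. The key numerical input is that $c_1(L)^{2n}=r$ together with the Fujiki-type relation $(2n)!\,q(L)^n=(n!)2^n\,c_1(L)^{2n}$ (the analogue of \eqref{eq:Beauville-Bogomolov}) forces $q(L)$ to lie in a finite set of values depending only on $n$ and $r$. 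Since the deformation type of an irreducible holomorphic symplectic variety is one of finitely many possibilities for each fixed dimension $2n$ (and within each, the Beauville-Bogomolov lattice is a fixed abstract lattice $\Lambda$), the pair $(X,L)$ determines a class in $\Lambda$ of bounded square $q(L)$.

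The central step is then to fix a deformation type and an abstract lattice $\Lambda$, and to stratify the relevant moduli. For each of the finitely many $\mathrm{Aut}(\Lambda)$-orbits of primitive classes $\lambda\in\Lambda$ with $q(\lambda)$ in the allowed finite set, I would work with the marked moduli space and the period domain $\Omega_\Lambda\subset\mathbb P(\Lambda\otimes\C)$. By the global Torelli theorem, a very general such $X$ is determined up to birational equivalence by its period point, and the condition that $L$ (with class $\lambda$) be of type $(1,1)$ cuts out a sub-period-domain $\Omega_{\lambda^\perp}$ of one lower dimension, which is itself the period domain of the orthogonal complement lattice. I would then use that the universal family over a suitable quasi-projective cover of this sub-period-domain — constructed from the moduli of marked pairs, which is of finite type — furnishes, after passing to a projective completion and applying Hironaka or Noetherian reduction, a single scheme $S_\lambda$ of finite type with a projective morphism $\mathcal X_\lambda\to S_\lambda$ whose fibers realize, up to birational equivalence, all $X$ with a class $\lambda$ of type $(1,1)$. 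Taking the disjoint union of the finitely many $S_\lambda$ gives the desired $S$; the first part of the theorem, producing the uniform $k,N,d$, follows by an Noetherian argument applied to the line bundle $L'$ of bounded self-intersection furnished over each stratum, giving uniform bounds on $h^0(X,kL')$ and on the degree of the image.

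\textbf{The main obstacle.} The hard part will be upgrading the transcendental period-theoretic boundedness into an honest algebraic family of finite type, and in particular controlling the passage from the (non-algebraic, only complex-analytic) period domain and the marked moduli space to a scheme $S$ of finite type over $\C$. The period domain $\Omega_{\lambda^\perp}$ is acted on by an arithmetic group $\Gamma$, and the quotient $\Gamma\backslash\Omega_{\lambda^\perp}$ is quasi-projective (by Baily-Borel), but the moduli stack of marked pairs is not separated and the global Torelli theorem only gives a birational — not biregular — reconstruction of $X$ from its period. I therefore expect the delicate point to be choosing, in a constructible way over the base, a single birational model for each period point so that the resulting total space is genuinely a scheme of finite type with a projective morphism to it; this is exactly the place where I would need to sacrifice the explicit control over the constants, as the author notes, and where replacing $L$ by a different bundle $L'$ becomes necessary to guarantee that the chosen linear system behaves uniformly across the family.
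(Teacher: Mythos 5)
Your overall architecture matches the paper's: make $L$ big via Huybrechts, reduce to finitely many deformation types of pairs, map to a Baily--Borel quotient of the period domain, invoke the global Torelli theorem to recover $X$ up to birational equivalence from its period, and finish with a Noetherian argument giving uniform $k, N, d$. However, there are two genuine gaps. The first is that you assert that the deformation type of an irreducible holomorphic symplectic variety of fixed dimension $2n$ is one of finitely many possibilities. This is an open problem, not a known theorem, and the rest of your reduction (a fixed abstract lattice $\Lambda$, finitely many orbits of classes $\lambda$) rests on it. The paper avoids it in Lemma \ref{lemma:finiteness-deformations}: by the local Torelli theorem one deforms $(X,L)$ to a pair $(X',L')$ with $\mathrm{Pic}(X')$ of rank one, so that $L'$ or its dual is ample ($X'$ being projective by Huybrechts), and then Koll\'ar--Matsusaka boundedness for polarized pairs with trivial canonical class and $c_1(L')^{2n}=r$ gives finitely many deformation types \emph{of pairs} among those actually occurring in the statement. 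Your observation that the Fujiki relation bounds $q(L)$ is correct but does not substitute for this step, since Koll\'ar--Matsusaka requires an ample class, which the hypothesis does not provide directly.

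The second gap is that the step you yourself flag as the main obstacle --- producing an honest finite-type algebraic family whose fibers realize every period point --- is described but not proved, and it is precisely the content of the paper's argument. The paper resolves it by Noetherian induction on $\Gamma\backslash D$: assuming the period image of a given algebraic family contains a maximal open set $U\subsetneq \Gamma\backslash D$, one picks a \emph{very general} point $z$ of an irreducible component $Z$ of the complement, uses the surjectivity of the period map to produce $(X_z,L_z)$ with that period, and observes that at a very general point of $Z$ the whole N\'eron--Severi group --- in particular an ample class $H_z$ --- deforms over an analytic neighborhood in $Z$, so the local deformation algebraizes; after resolving and passing to a finite cover one gets a projective family over a quasi-projective base whose period image is open in $Z$, contradicting maximality of $U$. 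Without an algebraization argument of this kind (or an equivalent device), the passage from the non-algebraic marked moduli space and period domain to a scheme $S$ of finite type over $\C$ with a projective morphism is not established, and the final application of the Noetherian boundedness lemma (Lemma \ref{lemma:boundedness-big}) has nothing to apply to.
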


\begin{remark}\label{remark:easy-for-K3}
Our proof relies crucially on the existence of the global period map. It does not give an explicit value for the constants $k, N$ and $d$ -- in particular, they might depend on $r$. 
\end{remark}

We start with two lemmas. 

\begin{lemma}\label{lemma:finiteness-deformations}
There exist finitely many pairs $(X_1, L_1), \ldots, (X_s, L_s)$ where the $X_i$ are complex irreducible holomorphic symplectic manifolds of dimension $2n$ and $L_i$ is an ample line bundle on $X_i$ with $c_1(L_i)^{2n}=r$ such that if $(X, L)$ is a pair as in Theorem \ref{thm:Matsusaka-birational}, then either $(X, L)$ or $(X, L^{\otimes -1})$ is deformation-equivalent to $(X_i, L_i)$ for some $i$.
\end{lemma}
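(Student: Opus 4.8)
The plan is to pass from the given big line bundle to an ample one by deforming inside the period domain, and then to invoke classical boundedness for the resulting polarized families.

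First I would normalise the bundle. By Huybrechts \cite[Corollary 3.10]{Huybrechts99}, for any pair $(X,L)$ as in Theorem \ref{thm:Matsusaka-birational} either $L$ or $L^{\otimes -1}$ is big; since $q(L)=q(L^{\otimes -1})$, I may replace $L$ by its dual and assume that $L$ is big with $q(L)>0$. The Fujiki relation $c_1(L)^{2n}=c_X\,q(L)^n$, where the Fujiki constant $c_X>0$ depends only on the deformation type of $X$, together with the integrality of $q$ forces $q(L)$ to be a positive integer with $q(L)^n=r/c_X$; in particular $c_X$ and $q(L)$ lie in ranges depending only on $n$ and $r$.

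Next I would deform $(X,L)$ to a pair in which the bundle is ample. Keeping $c_1(L)$ of type $(1,1)$, the surjectivity of the period map together with the local Torelli theorem lets me move $X$, within a connected family of pairs, to an irreducible holomorphic symplectic manifold $X'$ of Picard number $1$, with $\mathrm{Pic}(X')$ generated by the class $L'$ deforming $L$: such $X'$ correspond to very general points of the Noether--Lefschetz locus attached to $c_1(L)$, and this locus is connected because it is cut out by a single period condition. For a manifold of Picard number $1$ carrying a class $L'$ with $q(L')>0$, Huybrechts' criterion shows that $L'$ or its dual is ample, and since bigness is preserved along the family it is $L'$ itself that is ample. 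Thus every $(X,L)$ is deformation-equivalent, after possibly dualising, to a polarized pair $(X',L')$ with $L'$ ample and $c_1(L')^{2n}=r$. The main obstacle is precisely this step: it is where the global period theory is indispensable, and one must verify that the connecting family is genuinely a family of \emph{pairs}, i.e.\ that the integral $(1,1)$-class $c_1(L)$ stays algebraic throughout and hence defines a line bundle on every fibre.

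Finally I would deduce finiteness from classical boundedness. The manifolds $X'$ have trivial canonical bundle, so Matsusaka's big theorem over $\C$ applies with the single intersection number $c_1(L')^{2n}=r$ (the term $K_{X'}\cdot c_1(L')^{2n-1}$ vanishing): there is an integer $m_0=m_0(n,r)$, independent of $(X',L')$, such that $m_0 L'$ is very ample. The embeddings by $|m_0 L'|$ realise all the $X'$ as subvarieties of a fixed projective space of bounded degree $m_0^{2n}r$, so they lie in finitely many components of a Hilbert scheme; since smooth fibres over the same connected component of a family are deformation-equivalent, only finitely many deformation types of pairs $(X',L')$ occur. Choosing one representative $(X_i,L_i)$ per type produces the required finite list, and the two reductions above guarantee that every $(X,L)$, or its dual, is deformation-equivalent to some $(X_i,L_i)$.
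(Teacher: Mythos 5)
Your proposal follows essentially the same route as the paper: use the local Torelli theorem to deform $(X,L)$ (after possibly dualising) to a pair $(X',L')$ with $\mathrm{Pic}(X')$ of rank $1$, where Huybrechts' results force $L'$ to be ample, and then invoke the Koll\'ar--Matsusaka boundedness theorem (which, since $K_{X'}=0$, depends only on $c_1(L')^{2n}=r$) to conclude that only finitely many deformation types of polarized pairs occur. The extra detours through the Fujiki constant, the surjectivity of the period map, and the connectedness of the Noether--Lefschetz locus are not needed -- a small deformation of the pair to a very general point of the local deformation space of $(X,L)$ already suffices -- but they do no harm.
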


In the statement above and in the proof below, we are considering deformations of complex varieties over bases that are complex manifolds which are not necessarily projective.

\begin{proof}
Let $(X, L)$ be a pair as in Theorem \ref{thm:Matsusaka-birational}. By \cite[Theorem 3.11]{Huybrechts99} and \cite{Huybrechtserratum}, $X$ is projective. By the local Torelli theorem for $X$ \cite[Th\'eor\`eme 5]{Beauville83}, we can find a small deformation $(X', L')$ of the pair $(X, L)$ such that $\mathrm{Pic}(X')$ has rank $1$. By the aforementioned theorem of Huybrechts, $X'$ is projective, which implies, up to replacing $L$ by its dual, that $L'$ is ample.

Consider pairs $(X', L')$ where $X'$ is smooth projective of dimension $2n$, $K_{X'}=0$ and $L'$ is an ample line bundle with $c_(L)^{2n}=r$. By Koll\'ar-Matsusaka's refinement of Matsusaka's big theorem \cite{KollarMatsusaka83}, the family of such pairs $(X, L)$ is bounded. As a consequence, we can find finitely many pairs $(X_1, L_1), \ldots, (X_s, L_s)$ where the $X_i$ are complex irreducible holomorphic symplectic manifolds of dimension $2n$ and $L_i$ is an ample line bundle on $X_i$ with $c_1(L)^{2n}=r$ such that any pair $(X', L')$ as in the paragraph above is deformation-equivalent to one of the $(X_i, L_i)$.
\end{proof}

\begin{lemma}\label{lemma:boundedness-big}
Let $S$ be a noetherian scheme over $\C$, and let $\mathcal X\ra S$ be a projective morphism. Let $L$ be a line bundle over $\mathcal X$ such that for every complex point $s$ of $S$, the restriction $L_s$ of $L$ to $\mathcal X_s$ is big. Then there exists integers $k, N$ and $d$ such that for any complex point $s$ of $S$, $h^0(\mathcal X_s, kL_s)\leq N$ and the complete linear system $|kL_s|$ induces a birational map from $\mathcal X_s$ to a subvariety of $\mathbb P|kL_s|$ of degree at most $d$.
\end{lemma}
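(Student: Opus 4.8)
The plan is to reduce everything to finding a single integer $k$ such that for every complex point $s$ of $S$ the rational map $\phi_{kL_s}$ defined by the complete linear system $|kL_s|$ is birational onto its image; the bounds $N$ and $d$ will then be extracted from standard semicontinuity and local-constancy statements with $k$ fixed. Recall that a line bundle $M$ on a projective variety is big if and only if $\phi_{mM}$ is birational onto its image for some $m>0$, and that in this case the same holds for every multiple of $m$ (the map $H^0(mM)^{\otimes j}\to H^0(mjM)$ shows $\phi_{mjM}$ dominates $\phi_{mM}$, so birationality of the latter forces birationality of the former). For each $m>0$ set
\[
G_m=\{s\in S \;:\; \phi_{mL_s}\ \text{is birational onto its image}\}.
\]
By the multiples property $G_m\subseteq G_{m'}$ whenever $m$ divides $m'$, and since $L_s$ is big for every complex point $s$, the union $\bigcup_m G_m$ contains every complex point of $S$.

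Next I would run a noetherian induction on closed subsets of $S$, the combination step being harmless because divisibility lets me replace two integers $k_1,k_2$ by $\mathrm{lcm}(k_1,k_2)$ without losing birationality on either locus. After replacing $S$ by $S_{\mathrm{red}}$, treating irreducible components separately, and — since only the behaviour at complex points is at stake — reducing to the case where $S$ is integral and of finite type over $\C$, I argue as follows. By generic flatness and cohomology and base change there is a dense open $U_0\subseteq S$ over which $\mathcal X\to S$ is flat, $f_*(mL)$ is locally free and compatible with base change, and the relative rational map to $\mathbb{P}(f_*(mL)^{\vee})$ restricts on fibres to $\phi_{mL_s}$; by the constructibility of the generic degree of a morphism, each $G_m\cap U_0$ is then a constructible subset of $U_0$. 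Here is the one place the hypothesis that $S$ lies over $\C$ is essential: if no $G_m$ were dense in $S$, then each constructible set $G_m\cap U_0$ would be contained in a proper closed subset of $U_0$, and the complex points of $U_0$ would be covered by a countable union of proper subvarieties, which is impossible because $\C$ is uncountable and $U_0$ is integral of finite type over $\C$. Hence some $G_{m_0}$ contains a dense open $U\subseteq S$. Applying the induction hypothesis to the proper closed subset $S\setminus U$ yields an integer $m_1$ that works there, and $k:=\mathrm{lcm}(m_0,m_1)$ then works for every complex point of $S$.

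Finally, with $k$ fixed I would produce $N$ and $d$. Stratifying $S$ into finitely many flat loci, the function $s\mapsto h^0(\mathcal X_s,kL_s)$ is upper semicontinuous and integer valued on the noetherian space $S$; the descending chain condition applied to the closed sets $\{h^0\ge c\}$ forces it to be bounded, giving $N$. For the degree, on each fibre $\phi_{kL_s}$ is birational onto an image of dimension $\dim\mathcal X_s$, so $\deg(\mathrm{im}\,\phi_{kL_s})\le (kL_s)^{\dim\mathcal X_s}=k^{\dim\mathcal X_s}(L_s)^{\dim\mathcal X_s}$; since both the fibre dimension and the top self-intersection $(L_s)^{\dim\mathcal X_s}$ are locally constant in the flat family and hence take finitely many values on the noetherian base, this quantity is bounded by some $d$. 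The main obstacle is the second paragraph: spreading the pointwise bigness to a single dense open on which one fixed power of $L$ works simultaneously. This is exactly where the uncountability of $\C$ enters, and it cannot be dispensed with.
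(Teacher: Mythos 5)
Your main argument is correct and is essentially the paper's: the paper's one\-line invocation of ``Baire's theorem'' is exactly your observation that the constructible loci $G_m\cap U_0$ cannot all lie in proper closed subsets, since the complex points of an integral finite-type $\C$-scheme are not a countable union of proper subvarieties; combined with noetherian induction and the lcm trick this yields a uniform $k$, and the bound $N$ follows from semicontinuity as you say. You make explicit several steps the paper compresses (the multiples property $G_m\subseteq G_{m'}$, the recombination of strata), and that part is sound.

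There is, however, a genuine error in your extraction of the degree bound $d$: the inequality $\deg(\mathrm{im}\,\phi_{kL_s})\le (kL_s)^{\dim\mathcal X_s}$ is false for big line bundles that are not nef, and the lemma (and its application, where $q(L)>0$ gives bigness but not nefness) only assumes $L_s$ big. Writing $\mu^*(kL_s)=M+F$ on a resolution of the base locus, with $M$ the free part and $F$ effective, one has $e\cdot\deg(\mathrm{im})=M^{\dim\mathcal X_s}$, and while this is at most $\mathrm{vol}(kL_s)$ because $M$ is nef and $M\le\mu^*(kL_s)$, the volume of a big non-nef class is not controlled by its top self-intersection. Concretely, on $X=\mathrm{Bl}_p\mathbb{P}^2$ the bundle $L=H+2E$ is big with $L^2=-3$, yet $|mL|=|mH|+2mE$, so $\phi_{mL}$ is the degree-$m^2$ Veronese and $\deg(\mathrm{im}\,\phi_{mL})=m^2>(mL)^2$. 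The step is easily repaired without changing your architecture: on each stratum of the noetherian induction, after shrinking, the closure of the image of the relative map $\mathcal X_U\dashrightarrow\mathbb{P}(f_*(kL)^{\vee})$ is flat over a dense open, so its fibres have constant degree, and that constant (or, alternatively, the bound $k^{\dim}\mathrm{vol}(L_\eta)$ at the generic point) supplies $d$; finitely many strata then give a global bound. This is presumably what the paper's ``this readily shows the result'' intends.
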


\begin{proof}
We use noetherian induction on $S$. It suffices to show that if $S$ is non-empty, there exists a non-empty open subset $U$ of $S$ and constants $k, N, d$ such that the conclusion of the lemma holds on $U$. 

Since $L_s$ is big for any complex point $s$ of $S$, Baire's theorem shows that if $\eta$ is any generic point of $S$, then $L_{\eta}$ is big. This readily shows the result.
\end{proof}

\begin{proof}[Proof of Theorem \ref{thm:Matsusaka-birational}]
By Lemma \ref{lemma:finiteness-deformations}, we can restrict our attention to the pairs $(X, L)$ that are deformation-equivalent to a given $(X_0, L_0)$ where $L_0$ is ample. Note that this implies that $L$ is big.

We denote by $\Lambda$ the lattice $H^2(X_0, \Z)$ endowed with its Beauville-Bogomolov form, and by $l\in\Lambda$ the element $c_1(L)$. Note that $l^2>0$. Let $\Lambda_{prim}$ be the orthogonal complement of $l$ in $\Lambda$, let $D$ be the period domain associated to $\Lambda_{prim}$, that is, 
$$D=\{x\in \mathbb P(\Lambda_{prim}\otimes\C)| x^2=0, x.\overline x>0\}.$$

Let $\widetilde O(\Lambda_{prim})$ be the group 
$$\widetilde O(\Lambda_{prim}):=\{g_{|\Lambda_{prim}}|g\in O(\Lambda), g(l)=l\}.$$
We will freely identify $\widetilde O(\Lambda_{prim})$ to a subgroup of $O(\Lambda)$ when needed.

Let $M$ be the monodromy group of $(X_0, L_0)$, see for instance \cite[Definition 1.1(5)]{Markman11}. Then $M$ can be identified with a subgroup of $\widetilde O(\Lambda_{prim})$. By a result of Sullivan \cite{Sullivan77}, $M$ has finite index in $\widetilde O(\Lambda_{prim})$ -- the result of Sullivan deals with the unpolarized case, see the discussion in \cite[Theorem 3.5]{Verbitsky13}, but the polarized case follows from \cite[Proposition 1.9]{Markman11}.

Let $\Gamma$ be a subgroup of finite index in both $M$ and a torsion-free arithmetic subgroup of $\widetilde O(\Lambda_{prim})$. By the theorem of Baily-Borel \cite{BailyBorel66}, the quotient $\Gamma\backslash D$ is a normal quasi-projective variety.

To any triple $(X, L, \phi)$ where $X$ is an irreducible holomorphic symplectic variety, $L$ is a line bundle on $L$ such that the pair $(X, L)$ is deformation-equivalent to $(X_0, L_0)$ and $\phi$ is an isomorphism $\phi : H^2(X, \Z)\ra \Lambda$ sending $c_1(L)$ to $l$, we can associate its \emph{period point} $\mathcal P(X, L, \phi)$. The element $\phi(H^{2, 0}(X))\subset \Lambda\otimes\C$ belongs to $D$, and we define $\mathcal P(X, L, \phi)$ to be the image of $\phi(H^{2, 0}(X))$ in $\Gamma\backslash D$. If $\gamma$ is any element of $\Gamma$, then $\mathcal P(X, L, \phi)=\mathcal P(X, L, \gamma\circ\phi)$.

Let $S$ be a smooth quasi-projective complex scheme, and let $\mathcal X\ra S$ be a smooth projective morphism whose fibers are irreducible holomorphic symplectic varieties. Let $L$ be a line bundle on $\mathcal X$ such that the pairs $(\mathcal X_s, L_s)$ are deformation-equivalent to $(X_0, L_0)$ for any complex point $s$ of $S$. Assume for simplicity that $S$ is connected and fix $s$ and such a deformation. Then, using parallel transport, we can identify $\Lambda$ and $H^2(\mathcal X_s, \Z)$. By definition of the monodromy group $M$, the monodromy representation $\rho : \pi_1(S, s)\ra O(\Lambda)$ factors through $M$. If $\rho$ factors through the finite index subgroup $\Gamma\subset M$, then the construction above induces a period map 
$$\mathcal P : S\ra \Gamma\backslash D.$$
By a result of Borel \cite{Borel69}, $\mathcal P$ is algebraic.

Let $(X, L, \phi)$ and $(X', L', \psi)$ be two triple as above. Assume that $\phi$ (resp. $\psi$) is induced by parallel transport along a deformation of $(X, L)$ (resp. $(X', L')$) to $(X_0, L_0)$. Then the global Torelli theorem of Verbitsky \cite{Verbitsky13} shows that if $\mathcal P(X, L, \phi)=\mathcal P(X', L', \psi)$, then $X$ and $X'$ are birational. In case $L$ and $L'$ are ample, this is the statement of \cite[Theorem 1.10]{Markman11}, and the general case can be deduced either by using a small deformation to the ample case or by using \cite[Proposition 1.9]{Markman11} to reduce to the general global Torelli theorem.

\bigskip

We claim that there exists $S, \mathcal X$ and $L$ as above such that the monodromy representation of each connected component of $S$ factors through $\Gamma$ and such that the image of $\mathcal P$ is $\Gamma\backslash D$. By noetherian induction, we can find $S, \mathcal X$ and $L$, as well as a Zariski open subset $U$ of $\Gamma\backslash D$, such that the image of the period map $\mathcal P : S\ra \Gamma\backslash D$ contains $U$ and such that $U$ is maximal with respect to this property. We assume by contradiction that $U$ is strictly contained in $\Gamma\backslash D$.

Let $Z$ be an irreducible component of the complement of $U$ in $\Gamma\backslash D$, and let $z$ be a very general complex point of $Z$. Using the surjectivity of the period map \cite[Theorem 8.1]{Huybrechts99}, we can find a triple $(X_z, L_z, \phi)$, where $(X_z, L_z)$ is deformation-equivalent to $(X_0, L_0)$ and $\phi : H^2(X_z, L_z)\ra \Lambda$ is induced by parallel transport, such that $\mathcal P(X_z, L_z)=z$.

By the aforementioned theorem of Huybrechts, $X_z$ is projective. Let $H_z$ be an ample line bundle on $X_z$. By the local Torelli theorem \cite[Th\'eor\`eme 5]{Beauville83}, the pair $(X_z, L_z)$ can be deformed over a small open subset of $Z(\C)$ -- for the usual topology. Since $z$ is a very general point of $Z$, the whole N\'eron-Severi group of $X_z$ deforms above this open subset, hence so does $H_z$. This shows that this deformation can be algebraized. 

As a consequence, resolving singularities of the base and passing to a finite cover, we can find a smooth projective morphism $\mathcal X_T\ra T$, where $T$ is a smooth complex quasi-projective variety whose fibers are irreducible holomorphic symplectic varieties, and $L$ a line bundle on $\mathcal X_T$ where the pairs $(\mathcal X_t, L_t)$ are deformation-equivalent to $(X_0, L_0)$ for any complex point $t$ of $T$, such that the modnodromy representation on $T$ factors through $\Gamma$ and the image of the period map 
$$\mathcal P : T\ra \Gamma\backslash D$$
is a Zariski-open subset $V$ of $Z$. Since $Z$ is an irreducible component of $(\Gamma\backslash D)\setminus U$, we can  shrink $V$ so that $V$ is open in $(\Gamma\backslash D)\setminus U$.

Now taking the disjoint union of the families $\mathcal X\ra S$ and $\mathcal X_T\ra T$, we get a family as above such that the image of the period map 
$$\mathcal P : S\sqcup T\ra \Gamma\backslash D$$
contains $U\cup V$, which is open in $\Gamma\backslash D$ and stricty contains $U$. This is the desired contradiction.

\bigskip 

Now let $S, \mathcal X$ and $L$ be as above such that the image of the period map is $\Gamma\backslash D$. By Lemma \ref{lemma:boundedness-big}, we can find integers $k, N$ and $d$ such that for any complex point $s$ of $S$, $h^0(\mathcal X_s, kL_s)\leq N$ and the complete linear system $|kL_s|$ induces a birational map from $\mathcal X_s$ to a subvariety of $\mathbb P|kL_s|$ of degree at most $d$.

Let $(X, L)$ be any pair as in the theorem that is deformation-equivalent to $(X_0, L_0)$. Let $\phi : H^2(X,\Z)\ra\Lambda$ be induced by parallell transport. By construction of $S$, we can find a complex point $s$ of $S$ such that $\mathcal P(X, L, \phi)=\mathcal P(s)$. As noted above, this implies by the global Torelli theorem that $X_s$ is birational to $X$. Since $X$ and $X_s$ have trivial canonical bundle, such a birational map is an isomorphism outside a closed subscheme of codimension at least $2$. In particular, it induces an isomorphism between the Picard groups of $X$ and $X_s$. Let $L'$ be the image of $L_s$ in the Picard group of $X_s$. Then $(X, L')$ satisfies the condition of the theorem.
\end{proof}

\begin{remark}
While using it simplifies slightly the phrasing of the proof, the global Torelli theorem of Verbitsky -- as well as the surjectivity of the period map -- could be replaced by the local Torelli theorem.
\end{remark}

\subsection{A variant of the Kuga-Satake construction and birational boundedness in positive characteristic}

The goal of this section is to extend part of the boundedness result above to positive characteristic. To facilitate the exposition, we will prove a weaker result. The proof is very similar to that of Theorem \ref{thm:Matsusaka-birational}, but we replaced the complex period map with the Kuga-Satake construction.

It is very likely that the construction by Pera in \cite{MadapusiPera13spin} of integral models of Shimura varieties of orthogonal type provides a period map that is sufficient to translate with only minor changes the proof of Theorem \ref{thm:Matsusaka-birational} to a positive characteristic setting -- which would also take care of the case of characteristic $3$. However, since one of the goals of this paper is to investigate the extent to which one can refrain from using too much of the theory of integral models of Shimura varieties, we decided to provide a slightly more elementary -- though certainly related -- proof. To simplify certain arguments, we will work in characteristic at least $5$. 

\bigskip

The Kuga-Satake construction associates an abelian variety to a polarized Hodge structure of weight $2$ with $h^{2,0}=1$. As shown by Deligne in \cite{Deligne72}, when applied to the primitive second cohomology group of a $K3$ surface, it is given by an absolute Hodge cycle. 

At least over the field of complex numbers, the Hodge-theoretic definition of the Kuga-Satake construction makes it possible to apply it to any irreducible holomorphic symplectic variety endowed with a line bundle such that $q(L)>0$, $q$ being the Beauville-Bogomolov form -- in that case, the orthogonal of $c_1(L)$ in $H^2(X, \Z)$ is indeed a polarized Hodge structure of weight $2$ with $h^{2,0}=1$. Most of the usual results on the arithmetic of the Kuga-Satake construction extend to this setting without any change in the proofs, as we explain in this section.

\bigskip

The following is the situation we will be considering.

\noindent\textbf{Setup.} Let $k$ be a perfect field of characteristic $p>3$, and let $W$ be the ring of Witt vectors of $k$. Let $K$ be the fraction field of $W$. Fix an embedding of $K$ into the field $\C$ of complex numbers. Let $T$ be a smooth, irreducible $W$-scheme, and let $\pi : \mathcal M\ra T$ be a smooth projective morphism. Let $L$ and $H$ be two line bundles on $\mathcal M$. We assume that $H$ is relatively ample. We fix a $k$-point $0$ of $T$.

We assume that $\mathcal M_\C\ra T_\C$ is a family of irreducible holomorphic symplectic manifolds, and that for any complex point $t$ of $T$, the restriction $L_t$ of $L$ to $\mathcal M_t$ satisfies $q(L_t)>0$, where $q$ is the Beauville-Bogomolov form. We assume that there is no torsion in the second and third singular cohomology groups of the fibers of $\mathcal M_\C\ra T_\C$. 

Let $\Lambda$ be a lattice isomorphic to $H^2(\mathcal M_t, \Z)$ for any complex point $t$ of $T$. Let $l$ and $h$ be elements of $\Lambda$ that are mapped to $c_1(L_t)$ and $c_1(H_t)$ under such an isomorphism. Let $\Lambda_l$, $\Lambda_h$ and $\Lambda_{l, h}$ be the orthogonal complement of $l$, $h$ and $\Z l +\Z h$ in $\Lambda$ respectively. We assume that the reduction modulo $p$ of the restriction of $q$ to $\Lambda_l$ is non-degenerate.

For the sake of later reference, we will turn the preceding situation into a definition. 

\begin{definition}
In the setup above, we say that the triple $(\mathcal M_0, H_0, L_0)$ is \emph{admissible} and that the lattice $\Lambda_{l}$ is a \emph{primitive lattice} for $(M_0, L_0)$. The quadratic form of the lattice is called the \emph{Beauville-Bogomolov quadratic form}. It induces a quadratic form on the N\'eron-Severi group of $\mathcal M_0$.

We say that $(\mathcal M_0, H_0, L_0)$ is \emph{strongly admissible} if in the setup above, we can ensure the following condition: let $\overline{\eta_k}$  be a geometric generic point of the special fiber $T_k$ of $T$ above $W$. Then $\mathcal M_{\overline{\eta_k}}$ is ordinary in degree $2$ -- that is, its second crystalline cohomology group has no torsion, and its Newton and Hodge polygons coincide --and the N\'eron-Severi group of $\mathcal M_{\overline{\eta_k}}$ is generated over $\Q$ by $c_1(L)$ and $c_1(H)$. 
\end{definition}

\begin{remark}
By \cite{DeligneIllusie87}, the Hodge to de Rham spectral sequence of $\mathcal M_0$ satisfies $E_1^{p,q}=E_{\infty}^{p,q}$ if $p+q=1$ or $p+q=2$. Furthermore, as in Proposition \ref{proposition:de-Rham-for-MH}, the hypotheses ensure that $h^{2,0}(\mathcal M_0)=h^{0,2}(\mathcal M_0)=1$.
\end{remark}

\bigskip 

The following result shows that moduli spaces of sheaves on $K3$ surfaces tend to be strongly admissible. We refer to \cite{Ogus79} for the definition of superspecial $K3$ surfaces. They consist of isolated points in the moduli space of polarized $K3$ surfaces and correspond to the singular locus of this moduli space.

\begin{proposition}\label{proposition:moduli-spaces-are-admissible}
In the situation of Theorem \ref{thm:Zarhins-trick}, assume that $k$ has characteristic at least $5$, and that $(X, H)$ is a polarized, non superspecial $K3$ surface over $k$. Then we can find a polarization $A$ on $\mathcal M_H(v)$ such that $(\mathcal M_H(v), A, L)$ is strongly admissible.
\end{proposition}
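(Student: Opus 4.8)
The plan is to realize the Setup with $\mathcal M$ a relative moduli space of sheaves over a deformation of $(X,H)$, and then to check the required conditions fibrewise; the one genuinely new input is a genericity statement for the geometric generic fibre in characteristic $p$.

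\emph{Construction of the family.} Since $X$ is non-superspecial, $[(X,H)]$ is a smooth point of the moduli space of polarized $K3$ surfaces of degree $2md$ over $W$ by \cite{Ogus79}. After rigidifying with prime-to-$p$ level structure and passing to a connected \'etale neighbourhood, I would obtain a smooth irreducible $W$-scheme $T$ with a universal polarized family $(\mathcal X,\mathcal H)\to T$ and a $k$-point $0$ with $(\mathcal X_0,\mathcal H_0)\cong(X,H)$. Because $c_1(v)$ is proportional to $c_1(H)$ by Theorem \ref{thm:Zarhins-trick}(i), the Mukai vector $v$ extends over $T$, and \cite[Theorem 0.2]{Langer04b} together with \cite[Corollary 0.2]{Mukai84} (recorded in Theorem \ref{thm:general-properties-of-moduli}(i)) produces a smooth projective morphism $\pi:\mathcal M=\mathcal M_{\mathcal H}(\mathcal X,v)\to T$ whose fibres are four-dimensional of $K3^{[2]}$-deformation type, with $\mathcal M_0\cong\mathcal M_H(v)$. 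The bundle $L$ extends to $\mathcal M$ by the relative Mukai correspondence (Theorem \ref{thm:general-properties-of-moduli}(vi), Corollary \ref{corollary:lifting-to-W}), its class lying in the constant sublattice $\Z\oplus\Z\mathcal H\oplus\Z\omega$; I would take $A=A_0^{\otimes\lambda}\otimes L$ relatively ample as in the proof of Theorem \ref{thm:Zarhins-trick}, choosing $n=p$ there (legitimate since $p\geq 5$ is prime to $2$ and $3$) so that $q(L)$ is prime to $p$. The triple to be tested is $(\mathcal M_0,A,L)$.

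\emph{Admissibility.} Fibrewise, Theorem \ref{thm:general-properties-of-moduli}(ii) shows $\mathcal M_\C\to T_\C$ is a family of irreducible holomorphic symplectic manifolds; $q(L_t)>0$ for every complex $t$ because $q(L_t)$ is the deformation-invariant $q(L_0)>0$ of Theorem \ref{thm:Zarhins-trick}(iii); and torsion-freeness of $H^2$ and vanishing of $H^3$ of the complex fibres are exactly what is established inside the proof of Proposition \ref{proposition:de-Rham-for-MH}. For nondegeneracy of $q\bmod p$ on $\Lambda_l$, note that the Beauville-Bogomolov lattice of $K3^{[2]}$-type has discriminant a power of $2$, so $q$ is nondegenerate modulo the odd prime $p$ on all of $\Lambda$; since $q(l)=q(L)$ is prime to $p$, the line $\langle l\rangle$ splits off orthogonally over $\F_p$ and $q$ stays nondegenerate on $\Lambda_l\otimes\F_p$.

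\emph{Strong admissibility: the N\'eron-Severi group.} Let $\overline{\eta_k}$ be a geometric generic point of $T_k$. Since $T_k$ dominates the $19$-dimensional moduli space of polarized $K3$ surfaces over $k$, the surface $\mathcal X_{\overline{\eta_k}}$ is the geometric generic member in characteristic $p$, and its geometric Picard rank is $1$, generated by $\mathcal H$: the locus of Picard rank $\geq 2$ is a proper closed subset, as in characteristic zero. Then $N(\mathcal X_{\overline{\eta_k}})=\Z\oplus\Z\mathcal H\oplus\Z\omega$ has rank $3$ and $v^{\perp}\cap N$ has rank $2$, so by Theorem \ref{thm:general-properties-of-moduli}(vi) the group $NS(\mathcal M_{\overline{\eta_k}})\otimes\Q$ is the rank-$2$ space $\theta_v((v^{\perp}\cap N)\otimes\Q)$; as $c_1(L)$ and $c_1(A)$ are two independent classes in it, they generate it over $\Q$.

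\emph{Strong admissibility: ordinarity, and the main obstacle.} By the height stratification \cite{Artin74} the non-ordinary (positive-height) locus in the moduli of $K3$ surfaces in characteristic $p$ is a proper closed subset, so $\mathcal X_{\overline{\eta_k}}$ is an \emph{ordinary} $K3$ surface; torsion-freeness of $H^2_{\mathrm{cris}}(\mathcal M_{\overline{\eta_k}})$ then follows from Proposition \ref{proposition:de-Rham-for-MH} applied to a lift of $\mathcal X_{\overline{\eta_k}}$. The decisive and hardest step is to transfer ordinarity from the surface to the moduli space: the Mukai correspondence of Theorem \ref{thm:general-properties-of-moduli}(v)--(vi), being algebraic, should yield an isometry of $F$-crystals between $H^2_{\mathrm{cris}}(\mathcal M_{\overline{\eta_k}})$ and the orthogonal complement $v^{\perp}$ inside the Mukai $F$-crystal $\widetilde H_{\mathrm{cris}}(\mathcal X_{\overline{\eta_k}})$; the Mukai crystal of an ordinary $K3$ surface is ordinary, and removing the Tate class $v$ (a slope-$1$ unit-root class, $c_1(v)$ being a multiple of the algebraic class $\mathcal H$) preserves the coincidence of the Newton and Hodge polygons. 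Hence $\mathcal M_{\overline{\eta_k}}$ is ordinary in degree $2$. I expect the real work to lie precisely in making this $F$-crystal comparison rigorous over the function field $\kappa(\overline{\eta_k})$---checking Frobenius-compatibility of the Mukai isometry and the reduction to the unit-root behaviour of $v^{\perp}$---together with the positive-characteristic genericity of the Picard rank used above; these are the points I would have to prove rather than quote.
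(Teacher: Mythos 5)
Your construction of the family is essentially the paper's: it works with the formal universal deformation $\widehat X\to\widehat T\cong\mathrm{Spf}\,W[[t_1,\dots,t_{19}]]$ of $(X,H)$ (smooth of dimension $19$ precisely because $X$ is not superspecial) rather than an \'etale neighbourhood in the moduli space, forms the relative moduli space $\mathcal M_H(\widehat X,v)$, and takes $A$ to be an ample bundle that lifts. The genuine gap is your sentence ``the bundle $L$ extends to $\mathcal M$ by the relative Mukai correspondence (Theorem \ref{thm:general-properties-of-moduli}(vi), Corollary \ref{corollary:lifting-to-W})''. Neither reference gives this: Corollary \ref{corollary:lifting-to-W} only produces $L$ on the fibre over a single $W$-point of the base (the one-parameter lift to characteristic zero), and Theorem \ref{thm:general-properties-of-moduli}(vi) is a statement about one geometric fibre over an algebraically closed or finite field at a time. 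The map $\theta_v$ is induced by a correspondence with $\Q$-coefficients, so over the $19$-dimensional base all you get for free is that \emph{some positive multiple} $NL$ extends to a line bundle on the total space; the observation that $l$ lies in the constant sublattice $\Z\oplus\Z\mathcal H\oplus\Z\omega$ does not by itself force integrality of $\theta_v(l)$ in the relative Picard group. This is exactly the point to which the paper devotes most of its proof: it lifts $L$ order by order along the infinitesimal neighbourhoods $P_n$ of the $W$-point, using Proposition \ref{proposition:de-Rham-for-MH} to see that the obstruction group $H^2(\mathcal M_H(\widehat X,v)_{P_n},\mathcal O)$ is a \emph{free} $W$-module of rank $1$, and noting that the obstruction to lifting $L$ is torsion because $N$ times it -- the obstruction to lifting $NL$ -- vanishes; hence the obstruction is zero. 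Without this (or an equivalent) argument your triple need not satisfy the Setup, which requires $L$ itself to be a line bundle on $\mathcal M$ over all of $T$.

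A secondary remark: the verifications you add at the end (generic geometric Picard rank, ordinarity of $\mathcal X_{\overline{\eta_k}}$ and its transfer to $\mathcal M_{\overline{\eta_k}}$ through the Mukai $F$-crystal, nondegeneracy of $q$ modulo $p$ on $\Lambda_l$) concern points the paper treats as immediate once algebraizability and Proposition \ref{proposition:de-Rham-for-MH} are in place; your sketches are reasonable, and you honestly flag the $F$-crystal comparison as unproven, so the proposal is also incomplete there. But the decisive missing idea, the one the paper's proof actually supplies, is the torsion-obstruction argument for extending $L$ itself rather than a multiple of it.
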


\begin{proof}
Dividing $H$ by an integer if necessary, we can assume that $H$ is primitive. Let $\widehat X\ra \widehat T$ be the formal universal deformation of the pair $(X, H)$. Since $X$ is not superspecial, $\widehat T$ is formally smooth of dimension $19$, i.e., $\widehat T$ is isomorphic to $\mathrm{Spf}\,W[[t_1, \ldots, t_{19}]]$. By Theorem \ref{thm:Zarhins-trick}, $c_1(v)$ is proportional to $H$, so $v$ lifts to $\widehat X$. Consider the relative moduli space $\mathcal M_H(\widehat X, v)$ over $\widehat T$. It is smooth and projective. As a consequence, we can find an ample line bundle $A$ on $\mathcal M_H(v)$ that lifts to $\mathcal M_H(\widehat X, v)$. Since $\mathcal M_H(\widehat X, v)\ra \widehat T$ is algebraizable, and by Proposition \ref{proposition:de-Rham-for-MH}, the only thing that remains to be proved to show that $(\mathcal M_H(v), A, L)$ is strongly admissible is that $L$ lifts to $\mathcal M_H(\widehat X, v)$. 

Let $P$ be the $W$-point of $\widehat T$ that corresponds to $t_1=\ldots=t_{19}=0$. By Theorem \ref{thm:Zarhins-trick}, we know that $L$ lifts to $\mathcal M_H(\widehat X, v)_P$. We prove by induction on $n$ that $L$ lifts to the $n$-th infinitesimal neighborhood of $P$ in $\widehat T$. Note that such liftings are unique since $H^1(\mathcal M_H(v), \mathcal O_{\mathcal M_H(v)})=0$. Furthermore, Proposition \ref{proposition:de-Rham-for-MH} also shows that the formation of $R^2\pi_*\mathcal O_{\mathcal M_H(\widehat X, v)}$ is compatible with base change.

We just showed that the result is true for $n=0$. Assume that $L$ lifts to the $n$-th infinitesimal neighborhood $P_n$ of $P$ in $\widehat T$. The obstruction to lifting $L$ to the $n+1$st infinitesimal neighborhood of $P$ belongs to $H^2(\mathcal M_H(\widehat X, v)_{P_n}, \mathcal O_{\mathcal M_H(\widehat X, v)_{P}})$. By Proposition \ref{proposition:de-Rham-for-MH} again, this group is a free $W$-module of rank $1$. However, Theorem \ref{thm:general-properties-of-moduli}, $(vi)$ shows that some power of $L$ actually lifts to $\widehat T$, so this obstruction is torsion. This shows that the obstruction vanishes and concludes the proof.
\end{proof}

\bigskip

We now investigate the Kuga-Satake construction in the setup above. If $\ell$ is a prime number different from $p$, we write $R^2\pi_*\Z_{\ell, prim}$ for the orthogonal of $c_1(L)$ in $R^2\pi_*\Z_\ell$. Let $n\geq 3$ be an integer prime to $p$. Up to replacing $k$ by a finite extension whose degree only depends on $n$ and the pair $(\Lambda, l)$, we can assume that the family $\mathcal M\ra T$ is endowed with a spin structure of level $n$ with respect to $R^2\pi_*\Z_{\ell, prim}$. We refer to \cite[3.2]{tateinv} and to \cite{Andre96, Rizov10,  Maulik12} for definitions and details. 

\bigskip

Let $\mathcal S_{n, l, h}$, $\mathcal S_{n, h}$ and $\mathcal S_{n, l}$ be the orthogonal Shimura varieties with spin level $n$ associated to $\Lambda_{l, h}$, $\Lambda_h$ and $\Lambda_{l}$ respectively, see \cite[3.6]{tateinv}. Then these three varieties are all defined over $\Q$ and we have closed embeddings of $\mathcal S_{n, l, h}$ into $\mathcal S_{n, h}$ and $\mathcal S_{n, l}$. These are both defined over $\Q$.

The period map $\mathcal P$, as defined for instance in the previous section, gives a morphism 
$$\mathcal P : T_\C\ra \mathcal S_{n, l, h}.$$
The argument of \cite[Proposition 16]{tateinv}, which is essentially contained in \cite[Appendix 1]{Andre96}, the composition 
\[
\xymatrix{T_{\C}\ar[r]^{\mathcal P} & \mathcal S_{n, l, h}\ar[r] & \mathcal S_{n, h}
}
\]
is defined over $K$. Since the second map is a closed immersion defined over $\Q$, this shows that 
$$\mathcal P : T_\C\ra \mathcal S_{n, l, h}$$
is defined over $K$, and so is 
$$\mathcal P_l : T_\C \ra \mathcal S_{n, l}$$
defined as the composition with $\mathcal S_{n, l, h}\ra \mathcal S_{n, l}$.

The Kuga-Satake construction induces a morphism 
\begin{equation}\label{equation:Kuga-Satake-Shimura}
KS : \mathcal S_{n, l}\ra \mathcal A_{g, d', n, \Q}
\end{equation}
where $\mathcal A_{g, d', n, \Q}$ is the moduli space over $\Q$ of abelian varieties of dimension $g$ with a polarization of degree $d'^2$ and level $n$ structure, for some integers $g$ and $d'$, where $d'$ is prime to $p$, see \cite{Andre96}. Let
$$\kappa_K : T_K\ra \mathcal A_{g, d', n, K}$$
be the composition of $KS$ with $\mathcal P$.

Let $\psi : \mathcal A_K\ra T_K$ be the abelian scheme induced by $\kappa_K$, and let $C=C(\Lambda_l)$ be the Clifford algebra associated to $\Lambda_l$. Then there is a canonical injection of $C$ into the ring of endomorphisms of the scheme $\mathcal A_K$ and, as shown in \cite[6.5]{Deligne72}, we have an isomorphism of $\ell$-adic sheaves of algebras on $T_K$
\begin{equation}\label{equation:l-adic-KS}
C(R^2\pi_*\Z_\ell(1)_{prim})\simeq \mathrm{End}_C(R^1\psi_*\Z_\ell)
\end{equation}
where $C$ denotes the Clifford algebra -- here we are using the Kuga-Satake construction with respect to the full Clifford algebra.

By \cite[6.1.2]{Rizov10}, the data above is sufficient to show that the Kuga-Satake morphism $\kappa_K$ can be extended in a unique way to a morphism over $W$
$$\kappa : T\ra \mathcal A_{g, d', n}$$
where $\mathcal A_{g, d', n}$ denotes the moduli space over $W$. 

\begin{definition}\label{definition:Kuga-Satake-mapping}
The morphism 
$$\kappa : T\ra \mathcal A_{g, d', n}$$
is the Kuga-Satake mapping.
\end{definition}

\bigskip

We know recall some properties of the Kuga-Satake construction. Assume that $k$ is algebraically closed. Recall that $0$ is a $k$-point of $T$, and let $\widehat T$ is the formal neighborhood of $0$ in $T$. As in \cite[Section 6]{Maulik12}, and by the argument of \cite[Proposition 13]{tateinv}, we have the following canonical primitive strict embedding of filtered Frobenius crystals 
\begin{equation}\label{equation:crystalline-KS}
R^2\pi_*\Omega^{\bullet}_{\widehat{\mathcal M}/\widehat T}(1)_{prim}\hookrightarrow \mathrm{End}_C(R^1\psi_*\Omega^{\bullet}_{\widehat{\mathcal A} /\widehat T}).
\end{equation}
It is compatible with (\ref{equation:l-adic-KS}) and the Beauville-Bogomolov form via the comparison theorems. In particular, we get a primitive isometry
\begin{equation}\label{equation:special-classes}
H^2_{cris}(\mathcal M_0/W)\hookrightarrow \mathrm{End}(H^1_{cris}(\mathcal A_0/W))
\end{equation}

\begin{lemma}
Let $\Lambda_{l}$ be a lattice, and let $A_0$ be an abelian variety of dimension $g$ over $k$, together with a level $n$ structure and a polarization of degree $d'$. Then there are only finitely many subspaces $V\subset \mathrm{End}(H^1_{cris}(\mathcal A_0/W))$ that arise as the image of some $H^2_{cris}(\mathcal M_0/W)$ for some admissible triple $(\mathcal M_0, H_0, L_0)$ with primitive lattice $\Lambda_{l}$.
\end{lemma}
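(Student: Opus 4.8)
The plan is to translate the statement into a $p$-adic lattice-theoretic finiteness and then isolate the one genuinely nontrivial input. First I would fix the isometry type of the source. Since $\Lambda_{l}$ is given and the fibers $\mathcal M_0$ all have the same deformation type, the full lattice $\Lambda$ and the self-intersection $l^2$ are determined up to finitely many choices; hence, by the torsion-freeness of $H^2_{cris}(\mathcal M_0/W)$ established in Proposition \ref{proposition:de-Rham-for-MH} together with the crystalline comparison isomorphisms, the pair $(H^2_{cris}(\mathcal M_0/W), q)$ is isometric, as a quadratic $W$-module, to a single fixed lattice $N\cong\Lambda\otimes_{\Z}W$. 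By the primitive isometry (\ref{equation:special-classes}), the image $V$ is therefore a \emph{saturated}, \emph{Frobenius-stable} sub-$W$-lattice of the fixed $F$-crystal $M:=\mathrm{End}(H^1_{cris}(\mathcal A_0/W))$, and $V\hookrightarrow M$ is a primitive isometry onto its image for the fixed quadratic form on $M$ making (\ref{equation:special-classes}) isometric.

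Next I would pass to the associated isocrystal. Because $V$ is saturated, it is recovered from its rational span by $V=(V\otimes_W K)\cap M$, so it suffices to bound the number of sub-$F$-isocrystals $V_K:=V\otimes_W K$ of the fixed isocrystal $M\otimes_W K$ that are non-degenerately embedded and isometric to $N\otimes_W K$. Let $B=\mathrm{End}(M\otimes_W K)$ be the endomorphism algebra of this isocrystal, a fixed finite-dimensional semisimple $\Q_p$-algebra, equipped with the involution $*$ adjoint to the form. Since the form is Frobenius-compatible, the orthogonal complement of $V_K$ is again a sub-isocrystal, so the orthogonal projector onto $V_K$ is a $*$-self-adjoint idempotent $e_V\in B$; it determines $V_K$, and conversely, by Witt's theorem for hermitian forms over $(B,*)$, any two such $V_K$ are conjugate under the $p$-adic unitary group $U(B,*)(\Q_p)$. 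Thus the candidate $V_K$ form a single orbit.

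The main obstacle is that this orbit is, a priori, positive-dimensional: the stabilizer of $e_V$ is the Levi subgroup $U(e_VBe_V,*)\times U((1-e_V)B(1-e_V),*)$, which has infinite index in $U(B,*)(\Q_p)$ as soon as both factors are nontrivial. Concretely, along the unit-root (slope-$0$) directions of $M\otimes K$ — which are abundant, of dimension on the order of $2g^2$ — isometric sub-isocrystals genuinely move in families, so neither the quadratic form, nor Frobenius-stability, nor integrality alone can force finiteness. The essential extra input is the \emph{Hodge filtration}: each $V$ underlies a filtered $F$-crystal of $K3$ type with $\dim\mathrm{Fil}^2=1$ (the relevant Hodge number being $h^{2,0}=1$, as recorded in the Remark following the definition of admissibility), and this filtration is \emph{induced} from the fixed filtration on $M$. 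I would therefore use the rigidity of $K3$-type filtered $F$-crystals — in the spirit of Ogus's theory of $K3$ crystals and the strong divisibility condition — to show that, inside the single isocrystal orbit above, only finitely many saturated lattices $V$ carry such a filtered structure compatible with the fixed one on $M$.

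Finally I would assemble the pieces: finitely many isometry types for $N$, a single $U(B,*)(\Q_p)$-orbit of candidate rational subobjects $V_K$, and, within that orbit, finitely many integral lattices $V$ surviving the filtration constraint. Here the compactness of the group of Frobenius-equivariant isometries of the lattice $M$ — a closed, hence profinite, subgroup of $\mathrm{GL}(M)(W)$ — guarantees that the surviving set is closed, and together with the discreteness forced by the filtration-rigidity it is then finite. The crux, which I expect to absorb essentially all of the work, is exactly this filtration-rigidity: it is the step where the geometry of $K3$-type crystals, rather than soft lattice combinatorics, must enter.
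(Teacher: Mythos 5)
There is a genuine gap, and you have in fact located it yourself: after reducing the problem to a single positive-dimensional orbit of isometric sub-isocrystals of $\mathrm{End}(H^1_{cris}(\mathcal A_0/W))\otimes K$, you appeal to an unproven ``rigidity of $K3$-type filtered $F$-crystals'' to cut the orbit down to finitely many integral lattices, and you acknowledge that this step absorbs essentially all of the work. No argument is supplied, and it is doubtful that one exists in the form you need. Concretely: (a) $\mathrm{End}(H^1_{cris}(\mathcal A_0/W))$ carries no canonical Hodge filtration over $W$ -- the filtration depends on a choice of lift of $A_0$ to $W$, and different lifts give genuinely different filtrations -- so the condition ``induced from the fixed filtration on $M$'' is not well-posed, and quantifying over all lifts reopens exactly the positive-dimensional family you are trying to kill. (b) Ogus's rigidity theory of $K3$ crystals is a theory for $K3$ surfaces; here $\mathcal M_0$ is a $2n$-dimensional variety whose $H^2_{cris}$ merely satisfies $h^{2,0}=1$, and no crystalline Torelli-type rigidity is available in that generality -- indeed the premise of this whole section is that the geometry of these higher-dimensional varieties is \emph{not} under direct control, which is why the Kuga-Satake construction is invoked at all. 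The final appeal to compactness of the Frobenius-equivariant isometry group of $M$ plus ``discreteness forced by filtration-rigidity'' is circular: closed plus discrete gives finite only once the discreteness is established, and that is the missing content.

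The paper closes this gap by a global arithmetic input rather than a local crystalline one. It uses Kisin's integral canonical model $\overline{\mathcal S_{n,l}}$ of the orthogonal Shimura variety and the fact that the extended Kuga-Satake morphism $\overline{KS}$ of (\ref{equation:KS-integral-model}) is \emph{finite} and unramified. Every admissible triple produces a point of $\overline{\mathcal S_{n,l}}$ lying over the point of $\mathcal A_{g,d',n}$ corresponding to $A_0$; the subspace $V$ depends only on that point and not on the chosen $W$-lift, because the de Rham Kuga-Satake embedding (\ref{equation:KS-dR}) is horizontal for the Gauss-Manin connection; and finiteness of $\overline{KS}$ bounds the number of such points. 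This finiteness of a morphism of schemes is precisely what substitutes for the rigidity you postulate -- it is not visible at the level of a single $F$-crystal, which is why your local reduction, however cleanly set up, cannot be completed without reintroducing an input of this kind.
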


\begin{proof}
By the main construction and result of \cite{Kisin10}, and since $l^2$ is not divisible by $p$, the Shimura variety $\mathcal S_{n, l}$ admits a smooth canonical integral model $\overline{\mathcal S_{n, l}}$ over $W$, and the Kuga-Satake morphism $KS : \mathcal S_{n, l}\ra \mathcal A_{g, d', n, \Q}$ extends to a finite, unramified morphism
\begin{equation}\label{equation:KS-integral-model}
\overline{KS} : \overline{\mathcal S_{n, l}}\ra \mathcal A_{g, d', n}.
\end{equation}

Let $P$ be the canonical locally $\mathcal O_{\mathcal S_{n, l}}$-module endowed with a connection $\nabla$ and a Hodge filtration of weight $0$ on $\mathcal S_{n, l}$ over $K$. Denote again by $\psi : \mathcal A\ra \overline{S_{n, l}}$ the abelian scheme induced by $\overline{KS}$. Then by definition of the Kuga-Satake construction we have a morphism over $\mathcal S_{n, l}$
\begin{equation}\label{equation:KS-dR}
P\hookrightarrow \mathrm{End}(R^1\psi_*\Omega^{\bullet}_{\mathcal A/\mathcal S_{n, l}})
\end{equation}
analogous to (\ref{equation:crystalline-KS}). It is compatible with the Hodge filtrations and the connexions on both sides.

Now let $\mathcal A_0$ be as in the lemma. It suffices to show that there are a finite number of subspaces $V'\subset \mathrm{End}(H^1_{cris}(\mathcal A_0/W)\otimes K)$ that arise as stated. Such a $V'$ is obtained by first picking a preimage of the point of $\mathcal A_{g, d', n}$ corresponding to $\mathcal A_0$, then lifting this preimage to a $W$-point of $\overline{\mathcal S_{n, l}}$. If $A_K$ is the corresponding abelian variety over $K$, then the relation (\ref{equation:KS-dR}) induces a subspace of $\mathrm{End}(H^1_{dR}(\mathcal A_K/K))$. Via the comparison theorem between the de Rham cohomology of $\mathcal A_K$ and the crystalline cohomology of $\mathcal A_0$, this induces the subspace $V'$ of $\mathrm{End}(H^1_{cris}(\mathcal A_0/W)\otimes K)$. 

Since (\ref{equation:KS-dR}) is compatible with the Gauss-Manin connection, it is readily seen that the construction above only depends on the choice of a preimage of the point corresponding to $\mathcal A_0$ in $\overline{\mathcal S_{n, l}}$ under $\overline{KS}$. Since $\overline{KS}$ is finite, this shows the result.
\end{proof}

\begin{remark}
The proof of the lemma above is the only appearance in the text of canonical integral models of Shimura varieties. While they make the proof more natural, we do not really make full use of their properties, and they could be replaced by any model that allows us to extend the Kuga-Satake morphism over $W$.
\end{remark}

\begin{remark}
The idea of the proof could be extended to show that there is only finitely many birational equivalence classes of varieties $\mathcal M_0$ as above that have a given Kuga-Satake variety. We will prove a weaker result instead.
\end{remark}

The following lemma appears in the proof of \cite[Proposition 22]{tateinv}. For $K3$ surfaces, it is stated and proved in \cite[Proposition 4.17 (4)]{MadapusiPera13}, see also \cite[Proposition 2.3]{Benoist14}. 

\begin{lemma}\label{lemma:comparison-Picard-groups}
Assume that $k$ is algebraically closed. Let $(\mathcal M_0, H_0, L_0)$ be a strongly admissible triple over $k$, and let $A_0$ be the abelian variety associated to $\mathcal M_0$ under the Kuga-Satake mapping. Let $i : H^2_{cris}(\mathcal M_0/W)\hookrightarrow \mathrm{End}(H^1_{cris}(\mathcal A_0/W))$ be the morphism (\ref{equation:special-classes}).

Then we have an isomorphism of lattices 
$$\{\alpha\in NS(\mathcal M_0)|q(\alpha, c_1(L_0))=0\}\simeq \mathrm{End}(A_0)\cap \mathrm{Im}(i).$$
\end{lemma}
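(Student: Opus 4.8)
The plan is to realize both lattices inside $\mathrm{End}(H^1_{cris}(\mathcal A_0/W))$ through the map $i$ of (\ref{equation:special-classes}) and to check that the resulting identification is an isometry of lattices. I would first prove that $i$ induces an isomorphism after tensoring with $\Q$, and then upgrade to an integral statement using torsion-freeness of the relevant cokernels. The structural input used throughout is that, by (\ref{equation:crystalline-KS}) and (\ref{equation:special-classes}), the map $i$ is a \emph{primitive isometry of filtered Frobenius crystals}: it is strictly compatible with the Frobenius actions, with the Hodge filtrations, and with the quadratic forms (the Beauville--Bogomolov form on the left matching the trace/Clifford pairing on the right).

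\textbf{Forward inclusion.} Let $\alpha\in NS(\mathcal M_0)$ with $q(\alpha,c_1(L_0))=0$. Its crystalline cycle class lies in the primitive summand $H^2_{cris}(\mathcal M_0/W)(1)_{prim}$, is fixed by the crystalline Frobenius, and lies in the appropriate step of the Hodge filtration. Applying $i$ yields an element $i(\alpha)\in\mathrm{End}(H^1_{cris}(\mathcal A_0/W))$ which commutes with Frobenius, respects the Clifford structure, and preserves the Hodge filtration on $H^1_{cris}(\mathcal A_0/W)$. By the full faithfulness of the Dieudonn\'e functor for abelian varieties over the algebraically closed field $k$ (the crystalline incarnation of Tate's isogeny theorem), such an operator is the realization of an endomorphism of $A_0$ up to isogeny; after clearing denominators and invoking the integral compatibility of $i$ with (\ref{equation:l-adic-KS}), one checks that $i(\alpha)$ lands in $\mathrm{End}(A_0)\cap\mathrm{Im}(i)$. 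Injectivity of the resulting map is immediate since $i$ is a primitive embedding and the crystalline cycle class map is injective modulo torsion.

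\textbf{Reverse inclusion, the main obstacle.} Let $f\in\mathrm{End}(A_0)\cap\mathrm{Im}(i)$. Writing $f=i(\beta)$ with $\beta\in H^2_{cris}(\mathcal M_0/W)_{prim}$ unique by primitivity of $i$, the facts that $f$ commutes with Frobenius and preserves the Hodge filtration, together with the strictness of $i$, force $\beta$ to be a crystalline Tate class, i.e.\ Frobenius-fixed and in $F^1 H^2_{cris}(\mathcal M_0/W)(1)$. The crux is to show that $\beta$ is \emph{algebraic}. I would establish this by lifting to characteristic zero: using the Kuga--Satake mapping $\kappa:T\to\mathcal A_{g,d',n}$ of Definition \ref{definition:Kuga-Satake-mapping} over $W$ and the compatibility of the crystalline, de Rham and $\ell$-adic realizations in (\ref{equation:crystalline-KS})--(\ref{equation:l-adic-KS}), propagate the special endomorphism along the smooth family, pass to the characteristic-zero generic fiber where $\beta$ becomes a Hodge class on a holomorphic symplectic variety, apply the Lefschetz $(1,1)$-theorem (valid since $H^2$ is of $K3$ type) to recognize it as a divisor class, and specialize that divisor back to $\mathcal M_0$. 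The strong admissibility hypotheses --- ordinarity in degree $2$ and the fact that the generic N\'eron--Severi group is spanned by $c_1(L)$ and $c_1(H)$ --- are precisely what make this deformation/specialization argument go through and control the $p$-torsion; alternatively one may route the same argument through the integral canonical model $\overline{\mathcal S_{n,l}}$ already introduced in the previous lemma, on which the correspondence between special endomorphisms and divisor classes is available after Kisin. This is where essentially all of the difficulty resides, and it is here that the hypotheses on $\mathcal M_0$ are genuinely used.

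\textbf{Isometry and integrality.} Finally I would record that the bijection is an isometry: since $i$ matches the Beauville--Bogomolov form on $H^2_{cris}(\mathcal M_0/W)_{prim}$ with the restriction of the natural quadratic form on $\mathrm{End}(H^1_{cris}(\mathcal A_0/W))$ by the compatibility asserted in (\ref{equation:special-classes}), the two forms agree on the nose. Promoting the $\Q$-isomorphism to an isomorphism of lattices then follows from the torsion-freeness of the cokernel of the cycle class map, as in the proof of Theorem \ref{thm:general-properties-of-moduli}, together with the primitivity of $i$, which guarantees that $\mathrm{Im}(i)$ is saturated in $\mathrm{End}(H^1_{cris}(\mathcal A_0/W))$.
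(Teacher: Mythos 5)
The reverse inclusion in your proposal is, in substance, the paper's argument: the key input is Ogus's Theorem~2.9 (as rephrased at the end of Proposition~22 of \cite{tateinv}), which for a given special endomorphism $\beta$ produces a characteristic-zero lift of $\mathcal M_0$ \emph{inside the family} along which $\beta$ deforms, after which the Lefschetz $(1,1)$ theorem applies and one specializes back. Be careful, though, with the phrase ``propagate the special endomorphism along the smooth family, pass to the characteristic-zero generic fiber'': $\beta$ does not deform over all of $T$ --- it survives only on a proper closed formal subscheme of the deformation space (cut out by one equation, since $h^{0,2}=1$), and the whole content of Ogus's theorem is that this subscheme admits a $W$-point. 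If the endomorphism propagated over the entire family, every crystalline Tate class would be algebraic for free.

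The forward inclusion, however, has a genuine gap. You assert that since $i(\alpha)$ commutes with Frobenius and preserves the Hodge filtration, ``full faithfulness of the Dieudonn\'e functor\dots the crystalline incarnation of Tate's isogeny theorem'' over the algebraically closed field $k$ yields an endomorphism of $A_0$. Full faithfulness of the Dieudonn\'e functor only identifies such operators with endomorphisms of the $p$-divisible group $A_0[p^\infty]$, not of $A_0$ itself, and the map $\mathrm{End}(A_0)\otimes\Z_p\to\mathrm{End}(A_0[p^\infty])$ is far from surjective over $\overline{\F}_p$ (for a simple ordinary abelian surface the source has $\Z_p$-rank $4$ while the target has rank $8$); there is no Tate-type theorem over an algebraically closed field. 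Indeed, your argument as written would show that \emph{every} Frobenius-invariant class in $F^1H^2_{cris}(\mathcal M_0/W)(1)_{prim}$ produces an endomorphism of $A_0$, which is precisely the hard implication the lemma is organized to avoid proving. The paper's forward direction instead lifts $\alpha$ to a characteristic-zero member of the family (this is where $h^{0,2}(\mathcal M_0)=1$ and the strong admissibility hypothesis on the geometric generic fiber of $T_k$ enter), regards it there as a Hodge class, invokes the Hodge-theoretic Kuga--Satake correspondence together with the classical fact that Hodge classes in $\mathrm{End}(H^1)$ of a complex abelian variety come from genuine endomorphisms, and then specializes the resulting endomorphism to $A_0$. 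Some such passage through characteristic zero (or through the integral canonical model) is needed in \emph{both} directions; it cannot be replaced by a purely crystalline argument over $k$.
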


\begin{proof}
The arguments of the proof of \cite[Proposition 22]{tateinv} -- which are essentially the same as those of \cite[Proposition 4.17 (4)]{MadapusiPera13} -- apply without any change. For the sake of completeness, we briefly recall the argument. We work in the setup above.

Let $\alpha$ be a class in the N\'eron-Severi group of $\mathcal M_0$ that is orthogonal to $c_1(L_0)$. Since $h^{0,2}(\mathcal M_0)=1$, the assumption on the geometric generic fiber of $T$ ensures that $\alpha$ lifts to characteristic zero. In particular, it lifts to a Hodge class. Via the Kuga-Satake correspondence and the Hodge conjecture for endomorphisms of abelian varieties, it induces an endomorphism of a lift of $A_0$, hence of $A_0$ itself. This endomorphism belongs to the image of $i$ by construction. 

Now let $\beta$ be a class in $\mathrm{End}(A_0)\cap \mathrm{Im}(i)$. The argument of \cite[End of Proposition 22]{tateinv}, which is a rephrasing of \cite[Theorem 2.9]{Ogus79}, shows that there exists a lift of $\mathcal M_0$ parametrized by $T$ such that $\beta$ lifts to an endomorphism of the induced Kuga-Satake abelian variety. In particular, it lifts to a Hodge class, which by the Hodge conjecture for divisors allows us to conclude as before that $\beta$ was induced by a line bundle on $\mathcal M_0$, orthogonal to $c_1(L)$.
\end{proof}

The two results above directly imply the following finiteness result, which is a weak version of Theorem \ref{thm:Matsusaka-birational}.

\begin{proposition}\label{proposition:weak-bb}
Let $k$ be a finite field of characteristic at least $5$. Let $r$ and $n$ be two positive integers. Then there exist finitely lattices $\Lambda_1, \ldots, \Lambda_r$ such that if $(\mathcal M_0, H_0, L_0)$ is a strongly admissible triple over $k$ with $\mathrm{dim}(X)=2n$ and $c_1(L_0)^{2n}=r$, then 
$$NS(\mathcal M_{0, \overline k})\simeq \Lambda_i$$
for some integer $i$., where the left-hand side is endowed with the Beauville-Bogomolov form.
\end{proposition}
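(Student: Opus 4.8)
The plan is to combine the two preceding lemmas with the finiteness of points on a moduli space of abelian varieties over a finite field. First I would observe that the hypotheses pin down the Beauville-Bogomolov square of $L_0$: by equation (\ref{eq:Beauville-Bogomolov}), the equality $c_1(L_0)^{2n}=r$ forces $q(L_0)$ to be the unique positive solution of $(2n)!\,q^n=(n!)2^n r$, so $q(L_0)$ depends only on $r$ and $n$. Since the dimension $2n$ is fixed, the second Betti number is bounded and the abstract Beauville-Bogomolov lattice $\Lambda\cong H^2(\mathcal M_t,\Z)$ ranges over finitely many isometry classes (a single one for the $X^{[n]}$-deformation type that occurs in the application). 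For each such $\Lambda$, the $O(\Lambda)$-orbits of vectors $l$ with $q(l)=q(L_0)$ are finite in number, so the primitive lattice $\Lambda_l=l^{\perp}$ lies in a finite list; in particular the invariants $(g,d',n)$ attached to the Kuga-Satake morphism $KS:\mathcal S_{n,l}\ra\mathcal A_{g,d',n}$ of (\ref{equation:Kuga-Satake-Shimura}) take only finitely many values.

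Next I would use the Kuga-Satake mapping $\kappa:T\ra\mathcal A_{g,d',n}$ of Definition \ref{definition:Kuga-Satake-mapping}. Evaluating at the fixed $k$-point $0\in T(k)$ produces an abelian variety $A_0=\kappa(0)$, which is a $k$-point of $\mathcal A_{g,d',n}$ lying in the special fiber over $W$. Because $k$ is finite and $\mathcal A_{g,d',n}\otimes_W k$ is of finite type over $k$, the set of such $A_0$ is finite. Passing to $\overline k$, the abelian variety $A_{0,\overline k}$ — and hence both its endomorphism ring and the crystalline cohomology $H^1_{cris}(A_0/W)$ — ranges over a finite set as well.

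For each fixed $A_0$ I would then invoke the two lemmas. The first one bounds, for a given primitive lattice $\Lambda_l$ and a given $A_0$, the number of subspaces $V\subset\mathrm{End}(H^1_{cris}(A_0/W))$ that can arise as the image of the special-class embedding (\ref{equation:special-classes}); this finiteness comes from the finiteness of the canonical integral Kuga-Satake morphism $\overline{KS}$ of (\ref{equation:KS-integral-model}). Lemma \ref{lemma:comparison-Picard-groups} then identifies the primitive part $\{\alpha\in NS(\mathcal M_{0,\overline k})\mid q(\alpha,c_1(L_0))=0\}$, endowed with its Beauville-Bogomolov form, with the lattice $\mathrm{End}(A_{0,\overline k})\cap V$. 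As $A_0$ and $V$ each range over finite sets, this primitive part takes only finitely many isometry classes. Here one must check that base change from $k$ to $\overline k$ preserves strong admissibility, so that these lemmas genuinely apply to $\mathcal M_{0,\overline k}$.

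It remains to recover the full N\'eron-Severi lattice from its primitive part, and this is the step I expect to be the main obstacle. Knowing $c_1(L_0)$ up to the fixed invariant $q(L_0)$ and knowing the orthogonal complement $P=\{\alpha\mid q(\alpha,c_1(L_0))=0\}$ up to isometry, the lattice $NS(\mathcal M_{0,\overline k})$ is an overlattice of $\Z\,c_1(L_0)\oplus P$; the relation $[NS:\Z c_1(L_0)\oplus P]^2\,|\mathrm{disc}(NS)|=q(L_0)\,|\mathrm{disc}(P)|$ bounds the index in terms of $q(L_0)$ and $\mathrm{disc}(P)$, both of which lie in a finite set. By the Nikulin discriminant-form description of overlattices, there are then only finitely many such overlattices once the two summands are fixed up to isometry. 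Assembling the finitely many choices — finitely many $\Lambda_l$, finitely many $A_0$, finitely many $V$, and finitely many overlattices — yields the desired finite list of isometry classes for $NS(\mathcal M_{0,\overline k})$. The care required is that this overlattice analysis be carried out for the Beauville-Bogomolov form rather than an intersection form, and that the identifications of Lemma \ref{lemma:comparison-Picard-groups} respect it.
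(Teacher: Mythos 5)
Your argument from the Kuga--Satake step onward coincides with the paper's: finitely many Kuga--Satake abelian varieties $A_0$ over the finite field, finitely many subspaces $V\subset\mathrm{End}(H^1_{cris}(A_0/W))$ by the finiteness of $\overline{KS}$, Lemma \ref{lemma:comparison-Picard-groups} to pin down the primitive part $c_1(L_0)^{\perp}\subset NS(\mathcal M_{0,\overline k})$ up to finitely many isometry classes, and then a recovery of the full N\'eron--Severi lattice (the paper bounds $|\mathrm{disc}(NS(\mathcal M_{0,\overline k}))|$ by $q(L_0)|\mathrm{disc}(c_1(L_0)^{\perp})|$ and quotes Cassels' finiteness of lattices of bounded rank and discriminant, while you run the Nikulin overlattice analysis on $\Z c_1(L_0)\oplus P$; both are standard and correct).

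There is, however, a genuine gap in your very first step. You assert that because the dimension $2n$ is fixed, "the second Betti number is bounded and the abstract Beauville--Bogomolov lattice $\Lambda$ ranges over finitely many isometry classes." For general irreducible holomorphic symplectic manifolds this is not a theorem: boundedness of $b_2$ (let alone of the isometry class of $H^2$ with its Beauville--Bogomolov form, or of the Fujiki constant you implicitly use when you solve $(2n)!\,q^n=(n!)2^n r$ for $q(L_0)$) in terms of the dimension alone is an open problem, and the normalization in equation (\ref{eq:Beauville-Bogomolov}) is only established in the paper for the $K3^{[n]}$ deformation type. The statement of the proposition is about arbitrary strongly admissible triples, so you cannot restrict to that deformation type for free. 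The correct way to launch the argument --- and what the paper does --- is to use the lifting to $\C$ built into strong admissibility together with Lemma \ref{lemma:finiteness-deformations}: the Koll\'ar--Matsusaka boundedness theorem applied to a small deformation with Picard rank one shows that the pairs $(\mathcal M_{0,\C},L_{0,\C})$ fall into finitely many deformation classes, which is what gives you finitely many primitive lattices $\Lambda_l$ and a uniform bound on $q(L_0)$. Once you substitute that lemma for your Betti-number claim, the rest of your proof goes through as written.
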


\begin{proof}
Since by assumption $(\mathcal M_0, H_0, L_0)$ lifts to $\C$, we can apply Lemma \ref{lemma:finiteness-deformations} to show that there exist finitely many lattices that can appear as a primitive lattice for $(\mathcal M_0, L_0)$ and that $q(L_0)$ is uniformly bounded. As a consequence, we can restrict our attention to those sternly admissible triple with primitive lattice $\Lambda_{l}$ for some fixed $\Lambda_{l}$.

Fix some integer $n\leq 3$. After replacing $k$ by a finite extension whose degree only depends on $n$ and $\Lambda_{l}$ so that spin structures are defined on suitable deformations of $(\mathcal M_0, H_0, L_0)$ as before, we can construct the Kuga-Satake abelian variety $A_0$ together with the canonical morphism 
$$i : H^2_{cris}(\mathcal M_0/W)\hookrightarrow \mathrm{End}(H^1_{cris}(\mathcal A_0/W)).$$
If $d'$ is as in Definition \ref{definition:Kuga-Satake-mapping}, then $A_0$ is a polarized abelian variety over $k$ of degree $d'$, together with a level $n$ structure. Since $k$ is finite, there are only finitely many such $A_0$. Furthermore, given $A_0$, there are only finitely many subspaces $V\subset \mathrm{End}(H^1_{cris}(A_0/W))$ that arise as the image of a morphism $i$ as above. 

By Lemma \ref{lemma:comparison-Picard-groups}, this discussion shows that the lattice $c_1(L_0)^{\perp}:=\{\alpha\in NS(\mathcal M_{0, \overline k})|q(\alpha, c_1(L_0))=0\}$ can take only finitely many values as the strongly admissible triple varies. The inequality 
$$|\mathrm{disc}(NS(\mathcal M_{0, \overline k}))|\leq q(L_0)|\mathrm{disc}(c_1(L_0)^{\perp})|$$
shows that the discriminant of $NS(\mathcal M_{0, \overline k})$ is bounded. Since the set of isomorphism classes of lattices with bounded rank and discriminant is finite by \cite[Chap. 9, Theorem 1.1]{Cassels78}, this shows the result.
\end{proof}

\subsection{Finiteness results for $K3$ surfaces over finite fields}

The following weak finiteness result for $K3$ surfaces over finite fields will be the key to the proof of the Tate conjecture for $K3$ surfaces. We refer to \cite{Ogus79} for the definition of superspecial $K3$ surfaces. They consist of isolated points in the moduli space of polarized $K3$ surfaces and correspond to the singular locus of this moduli space.

\begin{proposition}\label{prop:weak-finiteness}
Let $k$ be a finite field of characteristic at least $5$. Let $\overline k$ be an algebraic closure of $k$ and let $W$ be the ring of Witt vectors of $\overline k$. 

Let $d$ and $t_0$ be positive integers. Then there exists a positive integer $N$ and nonzero integers $a,b$ such that there exist only finitely many polarized non-superspecial $K3$ surfaces $(X, H)$ of degree $2md$ over $k$ , where $m$ is a positive integer satisfying 
\begin{enumerate}[(i)]
\item $m=1[N]$,
\item $m$ is prime to $a$ and $b$, and both $a$ and $b$ are quadratic residues modulo $m$.
\item The $p$-adic valuation of the discriminant of $NS(X_{\overline k})$ is at most $t_0$.
\end{enumerate}
\end{proposition}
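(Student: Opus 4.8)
The plan is to run Zarhin's trick in reverse: from a polarized surface $(X,H)$ of degree $2md$ I produce the associated four-dimensional moduli space, bound the discriminant of its N\'eron--Severi lattice via Proposition \ref{proposition:weak-bb}, transport this bound back to $X$ using the discriminant comparison of Corollary \ref{corollary:comparison-of-discriminants}, and finally deduce finiteness from Corollary \ref{corollary:basic-finiteness}. Concretely, I first apply Theorem \ref{thm:Zarhins-trick} to the integer $d$; this yields integers $r,N,a,b$, and I take the $N,a,b$ of the present statement to be these. For any $(X,H)$ of degree $2md$ with $m$ satisfying (i) and (ii), Theorem \ref{thm:Zarhins-trick} produces a Mukai vector $v$ with $v^2=2$ and $c_1(v)$ proportional to $c_1(H)$, a four-dimensional moduli space $\mathcal M_H(v)$, and a line bundle $L$ on it with $c_1(L)^4=r$ and $q(L)>0$. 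Since $k$ has characteristic at least $5$ and $X$ is non-superspecial, Proposition \ref{proposition:moduli-spaces-are-admissible} upgrades this to a polarization $A$ making $(\mathcal M_H(v),A,L)$ strongly admissible.

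Next I would feed this strongly admissible triple into Proposition \ref{proposition:weak-bb} with $n=2$ and the fixed integer $r$ above. It shows that $NS(\mathcal M_H(v)_{\overline k})$, endowed with its Beauville--Bogomolov form, is isometric to one of finitely many lattices depending only on $r$ and $n$; in particular $|\disc(NS(\mathcal M_H(v)_{\overline k}))|$ is bounded by a constant depending only on $d$. I then invoke Corollary \ref{corollary:comparison-of-discriminants}, which gives $|\disc(NS(X_{\overline k}))| = p^{t}\lambda\,|\disc(NS(\mathcal M_H(v)_{\overline k}))|$ with $0<\lambda\le v^2=2$. Here hypothesis (iii) enters decisively: the prime-to-$p$ part of $|\disc(NS(X_{\overline k}))|$ is bounded by $2\,|\disc(NS(\mathcal M_H(v)_{\overline k}))|$, while its $p$-part equals $p^{v_p(\disc NS(X_{\overline k}))}\le p^{t_0}$ by (iii). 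Combining, $|\disc(NS(X_{\overline k}))|$ is bounded by a constant $D=D(d,t_0)$.

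To conclude, I use that $NS(X_{\overline k})$ is an even lattice of rank at most $22$, of signature $(1,\rho-1)$, with $|\disc|\le D$; by \cite[Chap.\ 9, Theorem 1.1]{Cassels78} it is isometric to one of finitely many lattices. Each of these has a positive direction, so choosing in each a fixed nonzero vector of positive square, the possible squares range over a finite set $R$; hence $X_{\overline k}$ carries a line bundle of self-intersection belonging to $R$. Applying Corollary \ref{corollary:basic-finiteness} to each value of $R$ and taking the union yields finitely many $\overline k$-isomorphism classes of surfaces $X$, which gives the finiteness asserted; the finiteness over $k$ follows as in \cite[Proposition 2.4.1]{LieblichMaulikSnowden14}, the characteristic being odd.

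\textbf{Main obstacle.} The delicate point is the control of the $p$-primary part of the discriminant. The isometry $\theta_v$ of Theorem \ref{thm:general-properties-of-moduli} has cokernel which is merely $p$-primary torsion of \emph{a priori} unbounded order, so the factor $p^{t}$ in Corollary \ref{corollary:comparison-of-discriminants} is not controlled intrinsically; without hypothesis (iii) the $p$-adic valuation of $\disc(NS(X_{\overline k}))$ could grow with $m$, and the argument would collapse. Condition (iii) is precisely the hypothesis that tames this contribution. The other genuinely deep inputs are Propositions \ref{proposition:moduli-spaces-are-admissible} and \ref{proposition:weak-bb}, whose Kuga--Satake arguments are what force the restrictions to characteristic at least $5$ and to non-superspecial surfaces.
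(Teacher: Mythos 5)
Your proposal is correct and follows essentially the same route as the paper: Theorem \ref{thm:Zarhins-trick} to produce the four-dimensional moduli space and the low-degree line bundle, Proposition \ref{proposition:moduli-spaces-are-admissible} for strong admissibility, Proposition \ref{proposition:weak-bb} to bound $\disc(NS(\mathcal M_H(v)_{\overline k}))$, Corollary \ref{corollary:comparison-of-discriminants} together with hypothesis (iii) to bound $\disc(NS(X_{\overline k}))$, and then Cassels plus Corollary \ref{corollary:basic-finiteness}. Your write-up is in fact slightly more explicit than the paper's at the last step, where you spell out how the finite list of lattices yields a finite set of possible degrees of positive line bundles before invoking Corollary \ref{corollary:basic-finiteness}.
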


\begin{proof}
We fix integers $r, n, N, a, b$ as in Theorem \ref{thm:Zarhins-trick}. As a consequence, if $(X, H)$ is a polarized $K3$ surface over $k$ as above, we can find a Mukai vector $v$ on $X$ satisfying condition $(C)$ of Definition \ref{def:fine-moduli-space} such that the moduli space $\mathcal M_H(v)$ has dimension $4$ and there exists a line bundle $L$ on $\mathcal M_H(v)$ satisfying $c_1(L)^{4}=r$ and $q(L)>0$. Proposition \ref{proposition:moduli-spaces-are-admissible}, there exists an ample line bundle $A$ on $\mathcal M_H(v)$ such that $(\mathcal M_H(v), A, L)$ is strongly admissible.

As a consequence of Proposition \ref{proposition:weak-bb}, we can find finitely many lattices $\Lambda_1, \ldots, \Lambda_s$, depending only on $d, N, a, b$, such that if $X, H$ and $v$ are as above, then
$$NS(\mathcal M_H(v)_{\overline k})\simeq \Lambda_i.$$

Let $p$ be the characteristic of $k$. By Corollary \ref{corollary:comparison-of-discriminants}, we can write 
$$|\mathrm{disc}(NS(X_{\overline k}))| = p^t\lambda |\mathrm{disc}(\Lambda_i)|$$
for some $\lambda\leq v^2=n-2$ and some nonnegative integer $t$. Since the $p$-adic valuation of $\mathrm{disc}(NS(X_{\overline k}))$ is bounded by assumption, this shows that the discriminant of $NS(X_{\overline k})$ is bounded independently of $X$. Since the set of lattices with bounded rank and discriminant is finite by \cite[Chap. 9, Theorem 1.1]{Cassels78}, Corollary \ref{corollary:basic-finiteness} shows that the set of isomorphism classes of $K3$ surfaces $X$ as in the statement is finite.
\end{proof}

\section{The Tate conjecture for $K3$ surfaces over finite fields}\label{section:twisted}

This section is devoted to the application of moduli spaces of twisted sheaves to the Tate conjecture for $K3$ surfaces. This circle of ideas originates in \cite{LieblichMaulikSnowden14}, though it is in the line of \cite{ArtinSwinnertonDyer73}. As explained \cite{LieblichMaulikSnowden14}, the failure of the Tate conjecture for a given $K3$ surface $X$ over a finite field can be rephrased as the existence of an infinite family of so-called twisted Fourier-Mukai partners of $X$ over $k$. Finiteness results as we proved above allow us to control such families, thus proving Theorem \ref{thm:main} and Theorem \ref{thm:picard-number-2}.

We will need a variation on the techniques of \cite{LieblichMaulikSnowden14} to adapt their results to a slightly more flexible setting and make it work in arbitrary characteristic. After recalling basic facts on moduli spaces of twisted sheaves, we give a very short proof of Theorem \ref{thm:picard-number-2} and prove Theorem \ref{thm:main}.

\subsection{Moduli spaces on twisted sheaves on $K3$ surfaces}

We briefly recall the theory of moduli spaces of twisted sheaves on a $K3$ surface. We refer to the discussion in \cite[3.1 to 3.4]{LieblichMaulikSnowden14} for details. 

Let $X$ be a $K3$ surface over a field $k$. Let $\ell$ be a prime number invertible in $k$. An \emph{$\ell$-adic $B$-field} on $X$ is an element 
$$B=\alpha/\ell^n\in H^2(X, \Q_\ell(1))$$
where $\alpha\in H^2(X, \Z_\ell(1))$ is primitive. The Brauer class associated to $B$ is the image of $\alpha$ under the composition
$$H^2(X, \Z_\ell(1))\ra H^2(X, \mu_{\ell^n})\ra Br(X)[\ell^n].$$
It is denoted by $[\alpha_n]$. 

Let $B=\alpha/\ell^n$ be an $\ell$-adic $B$-field on $X$ and write $r=\ell^n$. Following \cite[(3.4)]{Yoshioka06}, we define 
$$T_{-\alpha/r} : \widetilde H((X_{\overline k}, \Z_\ell)\ra \widetilde H((X_{\overline k}, \Q_\ell), x\mapsto x\cup e^{-\alpha/r}.$$

Let $N^{\alpha/r}(X)$ be the preimage of $N(X)$ by $T_{-\alpha/r}$. This is the group denoted by $CH^{\alpha/r}(X, \Z)$ in \cite{LieblichMaulikSnowden14}. By \cite[Lemma 3.3.3]{LieblichMaulikSnowden14}, we have 
\begin{equation}\label{eqn:Mukai-twisted}
N^{\alpha/r}(X)=\{(ar, D+a\alpha, c\omega)|(a, c\in \Z, D\in NS(X)\}\subset \widetilde H((X_{\overline k}, \Z_\ell).
\end{equation}
Elements of $N^{\alpha/r}(X)$ are called \emph{twisted Mukai vectors} on $X$. 

\bigskip

Let $\mathcal X\ra X$ be a $\mu_r$-gerbe representing the class $[\alpha_n]$. Given an $\mathcal X$-twisted sheaf on $X$, we can define its Mukai vector as an element of $N^{\alpha/r}(X)$. Let $v$ be a primitive element in $N^{\alpha/r}(X)$. Assume that $\mathrm{rk}(v)=r$ and $v^2=0$. Then by \cite[Proposition 3.4.1]{LieblichMaulikSnowden14}, the stack of simple $\mathcal X$-twisted sheaves on $X$ with Mukai vector $v$ is a $\mu_r$-gerbe over a $K3$ surface $\mathcal M(v)$ -- denoted by $M_{\mathcal X}(v)$ in \cite{LieblichMaulikSnowden14}.

The discriminant of $N^{\alpha/r}(X_{\overline k})$ is easy to compute.

\begin{lemma}\label{lemma:twisted-discriminant}
With the notations above, we have
$$|\disc(N^{\alpha/r}(X_{\overline k}))|=r^{2}|\disc(NS(X_{\overline k})|$$
\end{lemma}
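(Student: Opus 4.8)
The plan is to compute the Gram matrix of $N^{\alpha/r}(X_{\overline k})$ directly in a convenient basis and to read off its determinant. Starting from the explicit description (\ref{eqn:Mukai-twisted}), I would fix a $\Z$-basis $e_1, \ldots, e_\rho$ of $NS(X_{\overline k})$ (so $\rho$ is the geometric Picard number) and take as a basis of $N^{\alpha/r}(X_{\overline k})$ the vectors $f = (r, \alpha, 0)$ (the case $a=1$, $D=0$, $c=0$), the classes $g_i = (0, e_i, 0)$, and $h = (0,0,\omega)$. That these span $N^{\alpha/r}(X_{\overline k})$ is immediate from (\ref{eqn:Mukai-twisted}): a general element $(ar, D+a\alpha, c\omega)$ equals $a f + \sum_i d_i g_i + c h$ when $D = \sum_i d_i e_i$.

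Next I would record the Mukai pairings among these generators. The vector $h$ is isotropic, it is orthogonal to every $g_i$, and $\langle f, h\rangle = -r$; the sublattice spanned by the $g_i$ reproduces the intersection form of $NS(X_{\overline k})$; the remaining pairings $\langle f, f\rangle = \alpha^2$ and $\langle f, g_i\rangle = \alpha\cdot e_i$ are a priori only elements of $\Z_\ell$, but I expect them to drop out of the final answer. Concretely, in the ordered basis $(h, g_1, \ldots, g_\rho, f)$ the Gram matrix has the block shape
$$\begin{pmatrix} 0 & 0 & -r \\ 0 & G & \beta \\ -r & \beta^{T} & \alpha^2\end{pmatrix},$$
where $G$ is the Gram matrix of $NS(X_{\overline k})$ and $\beta = (\alpha\cdot e_i)_i$.

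To evaluate this determinant cleanly I would first perform the unipotent change of basis replacing $f$ by $f' = f - \sum_i c_i g_i$, where the $c_i \in \Q$ are chosen, using that $G$ is nondegenerate, so that $f'$ is orthogonal to all of the $g_i$. This change of basis has determinant $1$, it leaves $\langle f', h\rangle = -r$ and $\langle g_i, h\rangle = 0$ unchanged, and it makes the form split as the orthogonal direct sum of the plane $\Z f' \oplus \Z h$ and the lattice $\bigoplus_i \Z g_i$. The plane has Gram matrix $\begin{pmatrix}\langle f', f'\rangle & -r \\ -r & 0\end{pmatrix}$ of determinant $-r^2$, while the second summand has determinant $\det G = \disc(NS(X_{\overline k}))$. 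Hence $\disc(N^{\alpha/r}(X_{\overline k})) = -r^2\,\disc(NS(X_{\overline k}))$, and taking absolute values gives the claim.

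There is no serious obstacle here; the only point requiring a word of care is that the entries $\alpha^2$ and $\alpha\cdot e_i$ lie in $\Z_\ell$ rather than $\Z$, so one should observe that the determinant is nonetheless the genuine integer $-r^2\,\disc(NS(X_{\overline k}))$ — these $\ell$-adic entries sit entirely in the $f$-row and $f$-column and are annihilated by the isotropic vector $h$, which is precisely why they disappear. Alternatively, one can bypass this remark by invoking that $T_{-\alpha/r}$ is an isometry for the Mukai form, so that the pairing on $N^{\alpha/r}(X_{\overline k})$ is the restriction of the integral form on $N(X_{\overline k})$ and is thus integral to begin with.
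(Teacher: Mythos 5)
Your proof is correct and takes essentially the same approach as the paper: the paper's proof consists of exhibiting exactly your basis $(r,\alpha,0),\ (0,0,1),\ (0,D_1,0),\ldots,(0,D_s,0)$ of $N^{\alpha/r}(X_{\overline k})$ and asserting that the result follows immediately. Your Gram-matrix computation (and the observation that the entries $\alpha^2$, $\alpha\cdot e_i$ cancel against the isotropic vector $(0,0,\omega)$) just makes explicit the determinant evaluation the paper leaves to the reader.
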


\begin{proof}
Let $D_1, \ldots, D_s$ be a basis of the free $\Z$-module $NS(X_{\overline k})$. Then by (\ref{eqn:Mukai-twisted}), a basis of $N^{\alpha/r}(X_{\overline k}))$ is given by 
$$(r, \alpha, 0), (0, 0, 1), (0, D_1, 0), \ldots, (0, D_s, 0).$$
The result follows immediately.
\end{proof}

We now relate the N\'eron-Severi group of a $2$-dimensional moduli space of twisted sheaves on a $K3$ surface with that of the N\'eron-Severi group of the $K3$ surface. The following discussion parallels Theorem \ref{thm:general-properties-of-moduli}. We do not repeat the arguments allowing us to deduce results over arbitrary fields from the results over the field of complex numbers.

Let $\ell$ be a prime number that is invertible in $k$. Theorem 3.19, $(ii)$ in \cite{Yoshioka06} shows that there exists a canonical bijective isometry
$$v^{\perp, \ell}/\Z_\ell v\ra H^2(\mathcal M(v)_{\overline k}, \Z_\ell(1))$$
induced by an algebraic correspondence, where $v^{\perp, \ell}$ is the orthogonal of $v$ in the $\ell$-adic Mukai lattice of $X$. Note that $v$ is isotropic by assumption, so $\Z_\ell v\subset v^{\perp, \ell}$.

The exact same argument as in the proof of \ref{thm:general-properties-of-moduli}, $(vi)$ and $(vii)$ shows that -- if $k$ is algebraically closed or finite -- there exists an injective isometry
\begin{equation}\label{eq:twisted-NS}
\theta_v : v^{\perp}/\Z v\ra NS(\mathcal M(v)),
\end{equation}
where $v^{\perp}$ is the orthogonal of $v$ in the lattice of twisted Mukai vectors on $X_{\overline k}$, $N^{\alpha/r}(X)$. Furthermore, the cokernel of $\theta_v$ is a $p$-primary torsion group, where $p$ is the characteristic of $k$.

\begin{proposition}\label{proposition:discriminant-of-twisted-partners}
With the notations above, let $n_v$ be the positive integer defined by
$$v.N^{\alpha/r}(X_{\overline k})=n_v\Z.$$
If $k$ has positive characteristic $p$, then there exists a nonnegative integer $t$ such that 
$$(n_v)^2p^t\disc(NS(\mathcal M(v))_{\overline k})=r^2\disc(NS(X_{\overline k})).$$
If $k$ has characteristic zero, then 
$$(n_v)^2\disc(NS(\mathcal M(v))_{\overline k})=r^2\disc(NS(X_{\overline k})).$$
\end{proposition}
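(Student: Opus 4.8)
The plan is to compute both sides of the claimed identity by comparing the lattices $N^{\alpha/r}(X_{\overline k})$ and $NS(\mathcal M(v)_{\overline k})$ through the isometry $\theta_v$ of (\ref{eq:twisted-NS}), exactly as in the proof of Corollary \ref{corollary:comparison-of-discriminants}. First I would invoke the discriminant computation already available: by Lemma \ref{lemma:twisted-discriminant}, $|\disc(N^{\alpha/r}(X_{\overline k}))| = r^2 |\disc(NS(X_{\overline k}))|$, so the right-hand side of the proposition is, up to sign, the discriminant of the twisted Mukai lattice. It therefore suffices to relate $\disc(N^{\alpha/r}(X_{\overline k}))$ to $\disc(NS(\mathcal M(v)_{\overline k}))$ via the orthogonal-complement construction, keeping track of the extra factor $n_v$ that arises because $v$ is isotropic.

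The key difference from the untwisted case is that here $v$ is \emph{isotropic} ($v^2 = 0$), so $v \in v^{\perp}$ and the relevant lattice is the quotient $v^{\perp}/\Z v$ rather than $v^{\perp}$ itself. The central step is a lattice-theoretic lemma: for a primitive isotropic vector $v$ in an even lattice $L$ with $v.L = n_v\Z$, one has $|\disc(v^{\perp}/\Z v)| \cdot (n_v)^2 = |\disc(L)|$. I would prove this by choosing $w \in L$ with $v.w = n_v$, so that $v, w$ span a rank-$2$ sublattice whose Gram matrix has determinant $-n_v^2$ (using $v^2=0$); the orthogonal complement of $\langle v,w\rangle$ together with this hyperbolic-type piece gives a finite-index sublattice of $L$, and a careful index bookkeeping relates $\disc(L)$, $\disc(v^{\perp}/\Z v)$, and $n_v^2$. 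Applying this with $L = N^{\alpha/r}(X_{\overline k})$ yields $|\disc(v^{\perp}/\Z v)| \cdot (n_v)^2 = r^2 |\disc(NS(X_{\overline k}))|$ after substituting Lemma \ref{lemma:twisted-discriminant}.

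It then remains to pass from $v^{\perp}/\Z v$ to $NS(\mathcal M(v)_{\overline k})$. By the discussion preceding the proposition, $\theta_v : v^{\perp}/\Z v \ra NS(\mathcal M(v)_{\overline k})$ is an injective isometry whose cokernel is a $p$-primary torsion group. By \cite[Lemma 2.1.1]{LieblichMaulikSnowden14}, the discriminants of two even lattices related by such an inclusion differ by a power of $p$, so there is a nonnegative integer $t$ with $|\disc(v^{\perp}/\Z v)| = p^t |\disc(NS(\mathcal M(v)_{\overline k}))|$. Combining this with the identity from the previous step gives $(n_v)^2 p^t \disc(NS(\mathcal M(v)_{\overline k})) = r^2 \disc(NS(X_{\overline k}))$, with signs matching because all the lattices involved are even of the same signature type. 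In characteristic zero the cokernel of $\theta_v$ vanishes, so $t = 0$ and the cleaner identity holds.

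The main obstacle I anticipate is the isotropic lattice lemma in the middle step: unlike the positive-definite orthogonal-complement formula used in Corollary \ref{corollary:comparison-of-discriminants}, the factor $n_v$ enters precisely because $v$ is isotropic and the quotient by $\Z v$ is genuinely necessary, so one must argue carefully about the interplay between $v.L = n_v\Z$, the primitivity of $v$, and the structure of $v^{\perp}/\Z v$ to extract the correct power of $n_v$. Everything else is bookkeeping that parallels the untwisted argument, with Lemma \ref{lemma:twisted-discriminant} and \cite[Lemma 2.1.1]{LieblichMaulikSnowden14} doing the remaining work.
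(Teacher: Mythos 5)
Your proposal is correct and follows essentially the same route as the paper: compare $\disc(N^{\alpha/r}(X_{\overline k}))$ to $\disc(v^{\perp}/\Z v)$ via the factor $(n_v)^2$, pass to $NS(\mathcal M(v)_{\overline k})$ through $\theta_v$ and \cite[Lemma 2.1.1]{LieblichMaulikSnowden14}, and finish with Lemma \ref{lemma:twisted-discriminant}. The only difference is in the step you flag as the main obstacle: rather than decomposing off the rank-$2$ sublattice $\langle v,w\rangle$ and tracking indices, the paper simply observes that $w, v, e_1, \ldots, e_t$ is a basis of the full lattice (with $e_i$ lifting a basis of $v^{\perp}/\Z v$ and $w.v=n_v$) and reads off $|\disc(L)|=(n_v)^2|\disc(v^{\perp}/\Z v)|$ from the Gram determinant, which avoids the index bookkeeping entirely.
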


\begin{proof}
By (\ref{eq:twisted-NS}), and as in Corollary \ref{corollary:comparison-of-discriminants}, we can find a nonnegative integer $t$ such that 
$$|\disc(v^{\perp}/\Z v)|=p^t |\disc(NS(\mathcal M(v)_{\overline k}))|$$
We now relate the discriminant of $v^{\perp}/\Z v$ to that of $N^{\alpha/r}(X_{\overline k})$. Let $e_1, \ldots, e_t$ be elements of $v^{\perp}$ that form a basis of $v^{perp}/\Z v$. Then $v, e_1, \ldots, e_t$ is a basis of $v^{\perp}$. Let $w$ be an element of $N^{\alpha/r}(X_{\overline k})$ such that $v.w=n_v$. Then $w, v, e_1, \ldots, e_t$ is a basis of $N^{\alpha/r}(X_{\overline k})$. Computing the discriminant of $N^{\alpha/r}(X_{\overline k})$ in this basis, we get 
$$|\disc(N^{\alpha/r}(X_{\overline k}))|=(n_v)^2|\disc(v^{\perp}/\Z v)|.$$
Using Lemma \ref{lemma:twisted-discriminant}, we finally get
$$r^2|\disc(NS(X_{\overline k})|=(n_v)^2p^t|\disc(NS(\mathcal M(v)_{\overline k}))|.$$
\end{proof}

\subsection{Finiteness statements and the Tate conjecture for $K3$ surfaces}

The goal of this section is to prove Theorem \ref{thm:main}. In \cite{LieblichMaulikSnowden14}, the authors prove the following statement. 

\begin{theorem}\label{theorem:Maulik-Lieblich-Snowden}
Let $k$ be a finite field of characteristic at least $5$. Assume that there are only finitely many $K3$ surfaces defined over each finite extension of $k$. Then the Tate conjecture holds for all $K3$ surfaces over $\overline k$.
\end{theorem}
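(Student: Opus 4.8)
The plan is to argue by contraposition, following \cite{ArtinSwinnertonDyer73} and \cite{LieblichMaulikSnowden14}: assuming the Tate conjecture fails for some $K3$ surface over $\overline k$, I will manufacture infinitely many pairwise non-isomorphic $K3$ surfaces defined over a single finite extension of $k$, contradicting the hypothesis. First I would fix a $K3$ surface $X$ over a finite field $\mathbb F_q\supseteq k$ whose base change to $\overline k$ violates Tate, and a prime $\ell\neq p$. The failure of Tate means that Frobenius acts on the transcendental $\ell$-adic lattice $T_\ell:=(\mathrm{NS}(X_{\overline k})\otimes\Z_\ell)^{\perp}\subset H^2(X_{\overline k},\Z_\ell(1))$ with an eigenvalue that is a root of unity; after replacing $\mathbb F_q$ by a fixed finite extension $k'$ I may assume this eigenvalue is $1$, so that $\mathrm{Br}(X_{k'})[\ell^\infty]$ is infinite. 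Concretely this yields a Frobenius-fixed primitive class $\alpha\in T_\ell$ and, for every $n$, the $\ell$-adic $B$-field $B_n=\alpha/\ell^n$ whose Brauer class $[\alpha_n]$ has order exactly $\ell^n$ and is orthogonal to $\mathrm{NS}(X)$. All of these are defined over the single field $k'$.

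Next, for each $n$ I would produce a primitive twisted Mukai vector $v_n\in N^{\alpha/\ell^n}(X)$ with $\mathrm{rk}(v_n)=\ell^n$ and $v_n^2=0$. Writing $v_n=(\ell^n, D_0+\alpha, c_0\omega)$, the conditions $v_n^2=0$ and Galois-invariance are arranged by choosing a suitable Galois-fixed $D_0\in\mathrm{NS}(X)$ and $c_0\in\Z$; this is a lattice computation in the Mukai lattice, whose indefinite signature guarantees the existence of such isotropic vectors. Crucially, using the explicit description (\ref{eqn:Mukai-twisted}) of $N^{\alpha/\ell^n}(X)$ together with $\alpha\perp\mathrm{NS}(X)$, pairing $v_n$ against a basis shows that the divisibility $n_{v_n}$, defined by $v_n\cdot N^{\alpha/\ell^n}(X_{\overline k})=n_{v_n}\Z$, can be taken coprime to $\ell$ — indeed equal to $1$ — by choosing $D_0$ primitive in $\mathrm{NS}(X)$. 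By \cite[Proposition 3.4.1]{LieblichMaulikSnowden14}, the moduli space $\mathcal M(v_n)$ of simple twisted sheaves is then a $K3$ surface, and since $\alpha$, $D_0$ and $v_n$ are defined over $k'$, so is $\mathcal M(v_n)$.

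The non-isomorphism of infinitely many of the $\mathcal M(v_n)$ then falls out of the discriminant formula. By Proposition \ref{proposition:discriminant-of-twisted-partners} there is a nonnegative integer $t_n$ with $(n_{v_n})^2 p^{t_n}\,\disc(\mathrm{NS}(\mathcal M(v_n)_{\overline k}))=\ell^{2n}\disc(\mathrm{NS}(X_{\overline k}))$. Taking $n_{v_n}=1$ and comparing $\ell$-adic valuations — here I use $\ell\neq p$, so that $p^{t_n}$ contributes nothing $\ell$-adically — I obtain $v_\ell\big(\disc(\mathrm{NS}(\mathcal M(v_n)_{\overline k}))\big)=2n+v_\ell(\disc(\mathrm{NS}(X_{\overline k})))$, which tends to infinity (incidentally this also bounds $t_n$ by $v_p(\disc \mathrm{NS}(X))$, but only the $\ell$-adic side is needed). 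Hence the lattices $\mathrm{NS}(\mathcal M(v_n)_{\overline k})$, all of rank at most $22$, have unbounded discriminant and therefore realize infinitely many isometry classes by \cite[Chap. 9, Theorem 1.1]{Cassels78}. Since isomorphic $K3$ surfaces have isometric N\'eron--Severi lattices, the $\mathcal M(v_n)$ fall into infinitely many $\overline k$-isomorphism classes while all being defined over $k'$, contradicting the finiteness hypothesis; this proves the Tate conjecture over $\overline k$.

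The main obstacle is the first paragraph: turning the failure of the Tate conjecture into an honest infinite supply of Brauer classes of unbounded $\ell$-power order over one fixed finite extension $k'$, orthogonal to the N\'eron--Severi group, together with the guarantee that the associated moduli spaces of twisted sheaves are non-empty and defined over $k'$. This is precisely the input isolated in \cite{LieblichMaulikSnowden14}; the remaining lattice bookkeeping, including the choice of $v_n$ with $n_{v_n}=1$ via the pairing (\ref{eqn:Mukai-twisted}), is routine once the discriminant relation of Proposition \ref{proposition:discriminant-of-twisted-partners} is in hand.
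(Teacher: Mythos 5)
The paper does not actually prove this statement: it is quoted from \cite{LieblichMaulikSnowden14}, and the author explicitly declines to use it directly, instead running a ``simplified version of the argument'' in Section 4. Your proposal is essentially a reconstruction of that argument, so the fair comparison is with the paper's proofs of Theorems \ref{thm:main} and \ref{thm:picard-number-2}, which follow exactly your outline: assuming Tate fails, produce twisted Fourier--Mukai partners $X_n=\mathcal M(v_n)$ over a fixed finite field whose N\'eron--Severi discriminants have unbounded $\ell$-adic valuation, and contradict finiteness. Your endgame is sound: the divisibility $n_{v_n}$ need only be bounded (it divides $2d$, so is prime to $\ell$ for $\ell$ large --- your stronger claim $n_{v_n}=1$ is an overclaim but harmless), and the discriminant formula of Proposition \ref{proposition:discriminant-of-twisted-partners} together with the finiteness of lattices of bounded rank and discriminant finishes the argument as you say.

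The genuine gap is in your construction of the isotropic twisted Mukai vectors $v_n$. Writing $v_n=(\ell^n, D_0+\alpha, c_0\omega)$ with $\alpha$ your fixed Frobenius-invariant primitive class in $T_\ell$ and $D_0\in\mathrm{NS}(X)$, $c_0\in\Z$, the Mukai pairing gives $v_n^2=D_0^2+\alpha^2-2\ell^n c_0$, using $\alpha\perp\mathrm{NS}(X)$. Isotropy is therefore the congruence $D_0^2+\alpha^2\equiv 0 \bmod 2\ell^n$, a condition on the fixed $\ell$-adic number $\alpha^2\in\Z_\ell$ --- not a question about isotropic vectors in an indefinite lattice, because the rank component is pinned to $\ell^n$, the transcendental component is pinned to $1\cdot\alpha$, and the only freedom left is $D_0$ and $c_0$. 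For a general Frobenius-fixed primitive $\alpha$ this congruence has no solution once $n$ is large, so your appeal to ``indefinite signature'' does not produce the $v_n$, and without them the surfaces $X_n$ do not exist. The paper's (and Lieblich--Maulik--Snowden's) fix is to choose $\ell$ in advance so that $T(X,\Z_\ell)$ represents $-2d=-D^2$ exactly: Lemma \ref{lemma:new-lemma-3.5.1} produces a vector of square $1$ in $T(X,\Z_\ell)$ for $\ell$ large subject to finitely many quadratic-residue conditions, Lemma \ref{lemma:existence-of-quadratic-residues} shows such $\ell$ exist, and Hensel's lemma plus the requirement that $-2d$ be a square modulo $\ell$ then yield $\gamma\in T(X,\Z_\ell)$ with $\gamma^2=-2d$, so that $v_n=(\ell^n,\gamma+D,0)$ is isotropic on the nose with $c_0=0$. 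Your proof needs this step, or an equivalent substitute, to get off the ground.
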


We will not be able to use the theorem above directly, and will rely on a simplified version of the argument of \cite{LieblichMaulikSnowden14}, which holds in arbitrary characteristic.

From now on, let $X$ be a $K3$ surface over a finite field $k$ of characteristic $p$. Up to replacing $k$ by a finite extension, we can and will assume that $NS(X_{\overline k})=NS(X)$.
 
If $\ell$ is a prime number different from $p$, we denote by $T(X, \Z_{\ell})$ the orthogonal complement of $NS(X)\otimes\Z_{\ell}$ in $H^2(X, \Z_\ell(1))$. Note that by the Hodge index theorem for surfaces, the intersection form on $NS(X)$ is non-degenerate.

The following is a slightly modified version of Lemma 3.5.1 of \cite{LieblichMaulikSnowden14} that gets rid of the hypothesis on the characteristic of $k$ and does not make use of \cite{ArtinSwinnertonDyer73}.
\begin{lemma}\label{lemma:new-lemma-3.5.1}
Assume that $Br(X)$ is infinite. There exist prime numbers $p_1, \ldots, p_r$ such that if $\ell$ is big enough and $p_i$ is a square modulo $\ell$ for all $i$, there exists $\alpha\in T(X, \Z_{\ell})$ such that $\alpha^2=1$.
\end{lemma}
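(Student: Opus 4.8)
The plan is to reduce the statement to an elementary $\ell$-adic question about whether the lattice $T(X,\Z_\ell)$ represents the value $1$, and to observe that this is a nontrivial constraint only when $T$ has rank one, in which case it is governed by the square class of a single integer independent of $\ell$. The first thing I would record is that, by the Hodge index theorem, the intersection form on $NS(X)$ is nondegenerate, so $\delta := \disc(NS(X))$ is a nonzero integer; let $p_1, \ldots, p_r$ be its prime divisors. The hypothesis enters as follows: since $Br(X)$ is infinite the Tate conjecture fails for $X$, so the geometric Picard number $\rho = \rk NS(X_{\overline k}) = \rk NS(X)$ is strictly less than $b_2 = 22$; hence the transcendental lattice has rank $t := 22 - \rho \geq 1$, and this rank is independent of $\ell \neq p$. (Here one uses the standard dictionary between the infinitude of the prime-to-$p$ part of $Br(X)$ and the nonvanishing of the Frobenius-fixed transcendental cohomology; the supersingular case $\rho = 22$ is excluded, since there the Tate conjecture is already known and $Br(X)$ would be finite.)

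Now fix $\ell$ large, so that $\ell \neq 2, p$ and $\ell \nmid \delta$. Then $NS(X)\otimes\Z_\ell$ is unimodular, and being an orthogonal direct summand of the unimodular even lattice $H^2(X,\Z_\ell(1))$, its orthogonal complement $T(X,\Z_\ell)$ is a unimodular $\Z_\ell$-lattice of rank $t$. The key observation unlocking the problem is that for $\ell$ odd the evenness of $H^2$ imposes no constraint, as $2$ is a unit in $\Z_\ell$; thus $T(X,\Z_\ell)$ behaves like an arbitrary unimodular form. If $t \geq 2$, the reduction of $T(X,\Z_\ell)$ modulo $\ell$ is a nondegenerate quadratic form of rank $\geq 2$ over $\F_\ell$, which represents every element of $\F_\ell^\times$; a representation of $1$ then lifts to $\Z_\ell$ by Hensel's lemma, and the conclusion holds with no condition on $\ell$.

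It remains to treat $t = 1$, which is exactly where the primes $p_i$ enter and where the failure of Tate is essential, since when Tate holds one has $t$ even and $t=1$ cannot occur. Here $T(X,\Z_\ell) = \langle u_\ell\rangle$ for a unit $u_\ell$, and comparing determinants inside $H^2(X,\Z_\ell(1))$ (whose determinant is $-1$ up to squares) shows that the square class of $u_\ell$ equals that of $-\delta$. Thus $T(X,\Z_\ell)$ represents $1$ if and only if $-\delta$ is a quadratic residue modulo $\ell$. By quadratic reciprocity the symbol $\left(\tfrac{-\delta}{\ell}\right)$ factors as a product of the $\left(\tfrac{p_i}{\ell}\right)$ together with $\left(\tfrac{-1}{\ell}\right)$; hence, after enlarging the set $\{p_1,\dots,p_r\}$ to absorb the contributions of the sign and of the prime $2$, the requirement that every $p_i$ be a square modulo $\ell$ forces $-\delta$ to be a square modulo $\ell$. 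This produces the desired $\alpha$ with $\alpha^2=1$.

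I expect the main obstacle to be the first step: extracting from the mere infinitude of $Br(X)$ the nonvanishing $t \geq 1$ of the transcendental lattice, and correctly excluding the supersingular case without circularity, together with the square-class bookkeeping (the sign and the prime $2$) in the rank-one case. By contrast, the representability statements for $\ell$ large and odd are routine, and the rank $\geq 2$ case requires no arithmetic condition at all, which clarifies that the primes $p_i$ are present precisely to handle the possibility, available only because Tate fails, that the transcendental lattice is of odd rank one.
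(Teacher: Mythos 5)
Your proof is correct and follows essentially the same route as the paper's: reduce via Hensel's lemma to representing $1$ over $\F_\ell$, note that rank $\geq 2$ is automatic, and in the rank-one case control the square class of the discriminant of $T(X,\Z_\ell)$ through quadratic-residue conditions at the finitely many relevant primes (which the paper extracts from \cite[Proposition 2.1.2]{LieblichMaulikSnowden14} rather than computing directly from $\disc(NS(X))$). Your explicit bookkeeping of the sign $-1$ is in fact slightly more careful than the paper's own phrasing, which absorbs that contribution only later when $\ell$ is chosen congruent to $1$ modulo $8$ in Lemma \ref{lemma:existence-of-quadratic-residues}.
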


\begin{proof}
By Proposition 2.1.2 of \cite{LieblichMaulikSnowden14}, we can assume that the discriminant of $H^2(X, \Z_{\ell}(1))$ has $\ell$-adic valuation zero.  As a consequence, again if $\ell$ is big enough, the discriminant of $T(X, \Z_{\ell})$ has $\ell$-adic valuation zero. Furthermore, there are only finitely many prime numbers $p_i$ such that the discriminant of $T(X, \Z_{\ell})$ can have nonzero $p_i$-adic valuation as $\ell$-varies by Proposition 2.1.2 again. 

By Hensel's lemma, we can prove the result after tensoring $T(X, \Z_{\ell})$ with $\mathbb F_{\ell}$. If the rank of $T(X, \Z_{\ell})$ is at least $2$, $T(X, \Z_{\ell})\otimes\mathbb F_{\ell}$ represents $1$ by general results. If the rank is $1$, the result holds since its discriminant is a square by assumption.
\end{proof}

For the sake of reference, we also state the following easy lemma.

\begin{lemma}\label{lemma:existence-of-quadratic-residues}
Let $x_1, \ldots, x_r$ be finitely many integers. Then there exists infinitely many prime number $\ell$ such that all the $x_i$ are quadratic residues modulo $\ell$.
\end{lemma}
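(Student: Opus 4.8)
The plan is to reduce the statement to a congruence condition on $\ell$ via quadratic reciprocity and then invoke Dirichlet's theorem on primes in arithmetic progressions. Since the Legendre symbol $\left(\frac{\cdot}{\ell}\right)$ is multiplicative, it suffices to control the symbols of the prime factors of the $x_i$ together with the factor $-1$. Concretely, I would write each integer as $x_i = (-1)^{e_i}\prod_p p^{v_p(x_i)}$; then for a prime $\ell$ coprime to all the $x_i$ one has $\left(\frac{x_i}{\ell}\right) = \left(\frac{-1}{\ell}\right)^{e_i}\prod_p \left(\frac{p}{\ell}\right)^{v_p(x_i)}$, so it is enough to arrange that $\left(\frac{-1}{\ell}\right)=1$ and $\left(\frac{p}{\ell}\right)=1$ for every prime $p$ dividing some $x_i$.

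First I would set $M = 8\prod_p p$, where the product runs over the odd primes dividing $x_1\cdots x_r$, and consider primes $\ell \equiv 1 \pmod M$. For such $\ell$ the congruence $\ell \equiv 1 \pmod 8$ gives both $\left(\frac{-1}{\ell}\right)=1$ (as $\ell\equiv 1 \pmod 4$) and $\left(\frac{2}{\ell}\right)=1$. For each odd prime $p \mid x_1\cdots x_r$, quadratic reciprocity together with $\ell\equiv 1\pmod 4$ yields $\left(\frac{p}{\ell}\right)=\left(\frac{\ell}{p}\right)$, and since $\ell\equiv 1\pmod p$ this equals $\left(\frac{1}{p}\right)=1$. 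Thus every prime factor of each $x_i$, as well as $-1$, is a quadratic residue modulo $\ell$, and by multiplicativity each $x_i$ is a quadratic residue modulo $\ell$. Note that $\ell\equiv 1\pmod M$ forces $\ell$ to be odd and $\ell\equiv 1\pmod p$ for each relevant odd $p$, hence coprime to every $x_i$, so all the symbols above are defined and nonzero.

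Finally I would apply Dirichlet's theorem: since $\gcd(1,M)=1$, there are infinitely many primes $\ell\equiv 1\pmod M$, each of which has the desired property. The only substantive input is Dirichlet's theorem on primes in arithmetic progressions; the reciprocity bookkeeping is entirely routine. The single point requiring care is the separate treatment of the primes $2$ and $-1$, which is why the modulus carries the factor $8$ rather than merely $4$.
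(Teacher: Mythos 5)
Your proof is correct and follows essentially the same route as the paper: reduce via multiplicativity to the cases $-1$, $2$, and the odd prime divisors, force $\ell\equiv 1 \pmod 8$ to handle the first two, apply quadratic reciprocity for the rest, and conclude with Dirichlet's theorem. The paper's version is just a terser statement of the same argument, so your write-up simply supplies the bookkeeping it leaves implicit.
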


\begin{proof}
We can assume that $x_1=-1, x_2=2$ and the remaining $x_i$ are distinct odd prime numbers. Then choosing $\ell$ to be congruent to $1$ modulo $8$ ensures that $x_1$ and $x_2$ are quadratic residues modulo $\ell$. Using the quadratic reciprocity law and Dirichlet's theorem on primes in arithmetic progressions allows us to conclude.
\end{proof}

Now let $D$ be a line bundle of degree $2d$ on $X$, and let $t_0$ be the $p$-adic valuation of $\disc(NS(X))$. Let $N, a$ and $b$ be as in Proposition \ref{prop:weak-finiteness}. Let $\ell$ be a big enough prime number, different from $p$, such that $2, a, b, -2d$ are quadratic residues modulo $\ell$. Assume that $\ell$ satisfies the condition of Lemma \ref{lemma:new-lemma-3.5.1}. This is possible by Lemma \ref{lemma:existence-of-quadratic-residues}.

Assume that $X$ does not satisfy the Tate conjecture, so that $Br(X)$ is infinite by \cite[Proposition 3.2.8]{LieblichMaulikSnowden14} for instance. As in \cite[Proposition 3.5.4]{LieblichMaulikSnowden14}, the assumptions on $\ell$ and Lemma \ref{lemma:new-lemma-3.5.1} show that we can find $\gamma\in T(X, \Z_\ell)$ such that $\gamma^2=-2d$. 

If $n$ is a positive integer, let 
$$v_n=(\ell^n, \gamma+D, 0)\in N^{\gamma/\ell^n}(X).$$
As in \cite[Proposition 3.5.4]{LieblichMaulikSnowden14}, we have $v_n^2=0$. As in section \ref{section:twisted}, the twisted moduli space $X_n :=\mathcal M_{H_n}(v_n)$ is a $K3$ surface over $k$. 

By Proposition \ref{proposition:discriminant-of-twisted-partners}, 
$$\lambda_n^2 p^t\disc(NS((X_n)_{\overline k}))=\ell^{2n}\disc(NS(X_{\overline k})).$$
Here $\lambda_n$ is an integer such that 
$$v_n.N^{\gamma/\ell^n}=\lambda_n\Z.$$
Since $v_n.(0, D, 0)=-2d$, we have $\lambda_n^2\leq 4d^2$, so that the $\ell$-adic valuation of $\disc(NS((X_n)_{\overline k}))$ goes to infinity as $n$ goes to infinity. 

We now use the surfaces $X_n$ to prove Theorems \ref{thm:main} and \ref{thm:picard-number-2}, i.e. that $X$ satisfies the conjecture if the characteristic is at least $5$ or if the Picard number is at least $2$. The second one does not rely on Theorem \ref{thm:Zarhins-trick} and only uses Proposition \ref{proposition:birational-boundedness-for-K3}. It should be seen as a modern rephrasing of \cite{ArtinSwinnertonDyer73}.

\begin{proof}[Proof of Theorem \ref{thm:picard-number-2}]
Assume that $X$ has Picard number at least $2$. Then we can assume that $d$ is negative and find a divisor $B$ on $X$, orthogonal to $D$, such that $B^2=2e>0$. 

Let $b_n=(0, B, 0)\in N^{\gamma/\ell^n}(X)$. Then $b_n.v_n=0$ and $b_n^2=2e$. By equation (\ref{eqn:Mukai-twisted}), this shows that there exists a divisor $B_n$ on $X$ with $B_n^2=2e>0$. Corollary \ref{corollary:basic-finiteness} implies that the surfaces $X_{n, \overline k}$ fall into finitely many isomorphism classes, which is in contradiction with the fact that the $\ell$-adic valuation of $\disc(NS((X_n)_{\overline k}))$ goes to infinity as $n$ goes to infinity. 
\end{proof}

\begin{proof}[Proof of Theorem \ref{thm:main}]
Assume that the characteristic of $k$ is at least $5$. We can assume that $X$ is not superspecial. Indeed, these admit supersingular deformations that are not superspecial by \cite[Remark 2.7]{Ogus79}, so the Tate conjecture for these follow from the result for non superspecial surfaces and \cite[Theorem 1.1]{Artin74}.

We assume that $D$ is ample. By equation (\ref{eqn:Mukai-twisted}), we have
$$h_n:=(\ell^{2n}, \ell^n\gamma, -2d)\in N^{\gamma/\ell^n}(X_{\overline k}).$$

Furthermore, we have 
$$h_n.v_n=\ell^n\gamma^2+2d\ell^n=0$$ 
and
$$h_n^2=\ell^{2n}\gamma^2+4d\ell^{2n}=2d\ell^{2n}.$$
By equation (\ref{eq:twisted-NS}), this shows that there exists a line bundle $H_n$ on $X_n$ with $c_1(B)^2=2d\ell^{2n}$. It is easy to show that $H_n$ is ample. Indeed, if $\mathcal X\ra S$ is a deformation of $(X, H)$ over a discrete valuation ring with generic fiber of Picard rank $1$, then $\mathcal M(v)$ lifts to a projective scheme over $S$ with generic fiber of Picard rank $1$, generated by a lift of $H_n$.

Using again the equality 
$$\lambda_n^2 p^t\disc(NS((X_n)_{\overline k}))=r^2\disc(NS(X_{\overline k})).$$
with $\lambda_n^2\leq 4d^2$, we get that the $p$-adic valuation of $\disc(NS((X_n)_{\overline k}))$ is bounded independently of $n$.

We now restrict to the positive integers $n$ such that $\ell^{2n}=1[N]$. By Proposition \ref{prop:weak-finiteness}, there exist only finitely many $K3$ surfaces over $k$ satisfying the condition above, which is in contradiction with the fact that the $\ell$-adic valuation of $\disc(NS((X_n)_{\overline k}))$ goes to infinity as $n$ goes to infinity and proves that $X$ satisfies the Tate conjecture.

\end{proof}

\bibliographystyle{alpha}
\bibliography{ZarhinK3}

\end{document}